\newtheorem{lemma}{Lemma}[section]
\newtheorem{notation}{Notation}
\newtheorem{prop}{Proposition}[section]
\newtheorem{thm}{Theorem}
\newtheorem{cor}{Corollary}
\newtheorem{conj}{Conjecture}
\theoremstyle{definition}
\newtheorem{rem}{Remark}[section]
\newtheorem{defn}{Definition}[section]
\newcounter{numl}
\newtheorem{assumption}{Assumption}
\newcommand{\labelnuml}{\textup{(\roman{numl})}}
\newenvironment{numlist}{\begin{list}{\labelnuml}%
{\usecounter{numl}\setlength{\leftmargin}{0pt}%
\setlength{\itemindent}{2\parindent}%
\setlength{\itemsep}{\smallskipamount}\def
\makelabel ##1{\hss \llap {\upshape ##1}}}}{\end{list}}
\newcommand{\R}{{\mathbb R}}
\newcommand{\PP}{{\mathbb P}}
\newcommand{\C}{{\mathbb C}}
\newcommand{\Z}{{\mathbb Z}}
\newcommand{\N}{{\mathbb N}}
\newcommand{\Q}{{\mathbb Q}}
\newcommand{\T}{{\mathbb T}}
\newcommand{\cP}{{\mathcal P}}
\newcommand{\cE}{{\mathcal E}}
\newcommand{\cL}{{\mathcal L}}
\newcommand{\cM}{{\mathcal M}}
\newcommand{\cO}{{\mathcal O}}
\newcommand{\mult}{^{\scriptscriptstyle\times}}
\newcommand{\rk}{\mathop{\mathrm{rk}}\nolimits}
\newcommand{\restr}[1]{|_{#1}^{\vphantom x}}
\newcommand{\tr}{\mathrm{tr}}
\newcommand{\pushright}[1]{\ifmeasuring@#1\else\omit\hfill$\displaystyle#1$\fi\ignorespaces}  
\newcommand{\pushleft}[1]{\ifmeasuring@#1\else\omit$\displaystyle#1$\hfill\fi\ignorespaces}
\begin{document}
\title[Relative K-polystability  of projective bundles]{Relative K-polystability of projective bundles over a curve}
\date{}
\author[V. Apostolov]{Vestislav Apostolov} \address{Vestislav Apostolov \\
D{\'e}partement de Math{\'e}matiques\\ UQAM\\ C.P. 8888 \\ Succ. Centre-ville
\\ Montr{\'e}al (Qu{\'e}bec) \\ H3C 3P8 \\ Canada}
\email{apostolov.vestislav@uqam.ca}

\author[J. Keller]{Julien Keller} \address{Julien Keller \\ Aix Marseille Universit\'e, CNRS, Centrale Marseille, Institut de Math\'ematiques
de Marseille, UMR 7373, 13453 Marseille, France}
 \email{julien.keller@univ-amu.fr}

\begin{abstract} Let $\PP(E)$ be the projectivization of a holomorphic vector bundle $E$
over a compact complex curve $C$. We characterize the existence of an extremal K\"ahler metric on $\PP(E)$ in terms of relative K-polystability and the fact that $E$ decomposes as a direct sum of stable bundles.
\end{abstract}

\keywords{Extremal K\"ahler metrics; stable vector bundles;  projective bundles; toric fibrations; ruled manifolds}
\maketitle

\tableofcontents

\section{Introduction}

Let $M=\PP(E)$
be the complex manifold underlying the total space of the projectivization of a holomorphic vector bundle $E \to C$
over a compact complex curve $C$. In this paper, we are interested to understand when  $\PP(E)$ admits an extremal K\"ahler metric in the sense of Calabi~\cite{cal-one}, 
and if such a special metric does exist, which K\"ahler classes of $\PP(E)$ admit extremal K\"ahler metrics. 
A K\"ahler class $\Omega$ endowed with a K\"ahler metric is refereed to as an {\it extremal class}.
\par  By the openness of the extremal K\"ahler classes on $M$ (see \cite{Le-Sim, Fuj-Sch}) and using the fact that $H^{2,0}(M, \mathbb C)=0$, without loss of generality we can restrict our study to  rational classes. Furthermore,  as extremal  K\"ahler classes are invariant under a positive rescaling, we can even only consider integral classes, i.e. $\Omega= 2\pi c_1(\cL)$ for a positive line bundle $\cL$ on $M$. Such bundles on $M=\PP(E)\to C$ are of the form 
$$\cL=\cL_{q,p} = \cO(q)_{\PP(E)}\otimes \cO(p)_C, \, q, p \in \Z, q>0,$$  where $\cO(1)_C$ denotes (the pull-back to $M$) of {\it any} holomorphic line bundle over $C$ of degree $1$.  We note that $\cL=\cL_{q,p}$ becomes positive for $p/q \gg 0$ and thus defines a  {\it polarization} on $M$.

\par Before discussing the case of extremal metrics, let us recall some results about the particular case of constant scalar curvature K\"ahler metrics (CSC K\"ahler for short).  In the case of $M=\PP(E)\to C$, the existence of CSC K\"ahler metrics and its link to K-polystability in the sense of \cite{Do2,Tian} are completely settled thanks to the works \cite{ACGT,RT}.
\begin{thm}\label{thm:csc}\cite{ACGT,RT} Let $E$ a holomorphic vector bundle and let  $M=\PP(E) \to C$ be its projectivisation.  The following three conditions  are equivalent:
\begin{enumerate}
\item[\rm (i)] $M$ admits a CSC K\"ahler metric in any class $2\pi c_1(\cL)$;
\item[\rm (ii)] $M$ is K-polystable for any polarization $\cL$;
\item[\rm (iii)] $E$ is polystable, i.e. decomposes as the sum of stable bundles of same slopes;
\end{enumerate}
\end{thm}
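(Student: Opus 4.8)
My plan is to prove the cycle of implications $(iii)\Rightarrow(i)\Rightarrow(ii)\Rightarrow(iii)$; a direct proof of $(iii)\Rightarrow(ii)$ would amount to verifying K-polystability by hand, which is essentially as hard as the existence problem, so it is cleaner to route it through $(i)$. The implication $(i)\Rightarrow(ii)$ is an instance of the general fact that the existence of a CSC K\"ahler metric in a polarization class $2\pi c_1(\cL)$ forces $(M,\cL)$ to be K-polystable — K-semistability being Donaldson's theorem, and the polystable refinement being due to Stoppa and to Berman--Darvas--Hisamoto; one applies it to each polarization $\cL=\cL_{q,p}$.

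For $(iii)\Rightarrow(i)$, write $E=\bigoplus_{i=1}^{k}E_i$ with each $E_i$ stable of the common slope $\mu$, and set $r=\rk E$, $r_i=\rk E_i$. By the Narasimhan--Seshadri theorem each $E_i$ carries a Hermite--Einstein metric; over a curve this makes the associated connection projectively flat, and since the slopes coincide the direct sum is Hermite--Einstein on $E$. Hence $\PP(E)\to C$ is a projectively flat $\PP^{r-1}$-bundle and the decomposition is preserved by a fibrewise $(\C\mult)^{k-1}$-action, exhibiting $\PP(E)$ as an admissible toric fibration in the sense of~\cite{ACGT}: after a small resolution it becomes a $\PP^{k-1}$-bundle over the fibre product $\prod_{C}\PP(E_i)$, and each $\PP(E_i)$, being locally a Riemannian product of $C$ with a Fubini--Study $\PP^{r_i-1}$, is CSC, so the base is CSC. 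The momentum (generalized Calabi) construction then produces, inside an explicit family of invariant K\"ahler metrics representing a prescribed class $2\pi c_1(\cL_{q,p})$, a CSC representative, the only obstruction being the positivity, on the relevant moment interval, of an explicitly computable polynomial; the equal-slopes hypothesis, i.e.\ polystability of $E$, is exactly what makes this polynomial positive for every admissible $(q,p)$ — this is where the Narasimhan--Seshadri correspondence is used in an essential way.

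For $(ii)\Rightarrow(iii)$ I argue by contraposition, using test configurations that degenerate $E$ to an associated graded bundle. If $E$ is not semistable, let $0=F_0\subset\cdots\subset F_l=E$ be its Harder--Narasimhan filtration and $\cE\to C\times\C$ the corresponding $\C\mult$-equivariant Rees degeneration of $E$ onto $\mathrm{gr}^{\mathrm{HN}}(E)$; projectivizing and polarizing by $\cO(q)_{\PP(E)}\otimes\cO(p)_C$ yields a test configuration for $(\PP(E),\cL_{q,p})$ whose Donaldson--Futaki invariant is negative for $p/q\gg0$ — the slope-instability computation of~\cite{RT} — so $(\PP(E),\cL_{q,p})$ is not K-semistable, contradicting (ii). If instead $E$ is semistable but not polystable, run the same construction with a Jordan--H\"older filtration (all quotients stable of slope $\mu$): since $E\not\cong\mathrm{gr}^{\mathrm{JH}}(E)$ and tensoring by a line bundle preserves polystability, $\PP(E)\not\cong\PP(\mathrm{gr}^{\mathrm{JH}}(E))$, so the central fibre of the resulting test configuration is not isomorphic to $\PP(E)$ and the degeneration is not a product; yet, all the slopes being equal, its Donaldson--Futaki invariant vanishes for \emph{every} polarization. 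Thus $(\PP(E),\cL_{q,p})$ is K-semistable but not K-polystable, again contradicting (ii).

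The delicate point is $(iii)\Rightarrow(i)$: both setting up the momentum construction over the nontrivial base $\prod_{C}\PP(E_i)$ and, above all, proving that the ``CSC polynomial'' remains positive on the whole moment interval for every admissible $(q,p)$ — it is there, and essentially only there, that polystability of $E$ (rather than mere semistability) is genuinely needed. By contrast, the test-configuration estimates in $(ii)\Rightarrow(iii)$, while not trivial, are routine once the Donaldson--Futaki invariant of a bundle degeneration has been expressed in terms of the slopes of the graded pieces.
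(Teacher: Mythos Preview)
Your cycle $(iii)\Rightarrow(i)\Rightarrow(ii)\Rightarrow(iii)$ is the same route the paper indicates in the remark following the statement (the theorem is quoted from \cite{ACGT,RT}, not proved in the paper itself). The steps $(i)\Rightarrow(ii)$ via \cite{stoppa} and $(ii)\Rightarrow(iii)$ via Rees degenerations to $\mathrm{gr}^{\mathrm{HN}}(E)$ and $\mathrm{gr}^{\mathrm{JH}}(E)$ are exactly the arguments of \cite{RT} that the paper cites, and your observation that in the Jordan--H\"older case all graded pieces share the slope $\mu(E)$, hence $\mathfrak F^{alg}=0$ for \emph{every} polarization while the configuration is not a product, is the right mechanism.

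Where your sketch goes astray is $(iii)\Rightarrow(i)$, and especially your closing assessment that this is ``the delicate point''. When $E=\bigoplus_i E_i$ with each $E_i$ stable of the \emph{same} slope, Narasimhan--Seshadri makes $E$ itself Hermite--Einstein, hence projectively flat; $\PP(E)\to C$ is then a flat $\C\PP^{r-1}$-bundle, and for any K\"ahler class the locally symmetric metric (fibrewise Fubini--Study glued to a constant-curvature $g_C$ via the flat structure) is already CSC. There is no momentum profile to solve for and no ``CSC polynomial'' whose positivity has to be checked --- that machinery from \cite{ACGT}, and the polynomial positivity obstruction you describe, enter only when the $\mu(E_i)$ \emph{differ}, i.e.\ in the genuinely extremal, non-CSC setting treated in the body of the present paper. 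So $(iii)\Rightarrow(i)$ is the easy direction; the equal-slopes hypothesis is used to get projective flatness of $E$, not to force a profile polynomial positive. You also do not separate out ${\bf g}=0$; the paper's remark handles it apart, via Lichnerowicz--Matsushima for $(i)\Leftrightarrow(iii)$ and the nonvanishing of the ordinary Futaki invariant on product configurations when $E=\bigoplus\cO(a_i)$ with the $a_i$ not all equal for $(ii)\Leftrightarrow(iii)$.
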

\begin{rem} The notion of stability for bundles refers here to the classical notion of Mumford--Takemoto stability. The equivalence ${\rm (iii)} \Longleftrightarrow {\rm (i)}$ is established in \cite[Theorem~1]{ACGT}  when the base $C$ has genus  ${\bf g} \ge 2$,  and in \cite[Theorem~5.13]{RT} when ${\bf g} \ge 1$ (by using the result of \cite{stoppa}).  This equivalence also holds true for ${\bf g} =0$ as a consequence of the Lichnerowicz--Matsushima theorem, by noting that in this case $E$ splits as a direct sum of line bundles over $C=\C \PP^1$.  The equivalence ${\rm (ii)} \Longleftrightarrow {\rm (iii)}$  follows by  \cite[Theorem.~5.13]{RT},  by noting again that the case ${\bf g}=0$ can be treated apart by observing that the  usual  Futaki invariant associated to $2\pi c_1(\cL)$ vanishes if and only if $E$ is the sum of line bundles  over $\C \PP^1$ of the same degree. 
\end{rem}

In \cite{ACGT}, it was introduced the following conjecture in view of the classification of projective bundles over a curve,  admitting extremal metrics.
\begin{conj}\label{c:extremal}\cite{ACGT} Let $E$ a holomorphic vector bundle and let  $M=\PP(E) \to C$ be its projectivisation. The following three conditions  are equivalent:
\begin{enumerate}
 \item[{\rm (a)}] $M$ admits an extremal K\"ahler metric;
 \item[{\rm (b)}] $M$ is relative K-polystable for a certain polarization $\cL$;
 \item[{\rm (c)}] $E$ decomposes as a direct sum of stable bundles.
\end{enumerate}
\end{conj}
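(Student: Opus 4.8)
The plan is to establish the equivalences via the cycle $(\mathrm c)\Rightarrow(\mathrm a)\Rightarrow(\mathrm b)\Rightarrow(\mathrm c)$: the step $(\mathrm a)\Rightarrow(\mathrm b)$ will be soft, the step $(\mathrm c)\Rightarrow(\mathrm a)$ will rest on an explicit momentum construction, and $(\mathrm b)\Rightarrow(\mathrm c)$ will carry the real content. For $(\mathrm c)\Rightarrow(\mathrm a)$ I would write $E=\bigoplus_{i=0}^{k}E_i$ with each $E_i$ stable of rank $r_i$ and degree $d_i$; by the theorem of Narasimhan--Seshadri each $E_i$ carries a Hermite--Einstein metric, so $\PP(E_i)\to C$ admits a CSC K\"ahler metric in every K\"ahler class (Theorem~\ref{thm:csc}). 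Next I would realise $M=\PP(E)$ as a rigid semisimple toric fibration, in the sense of the generalised Calabi construction of \cite{ACGT}, with fibre the toric variety $\PP(\C^{r_0}\oplus\cdots\oplus\C^{r_k})$ carrying its standard $(\C^\times)^k$--action and with base data assembled from $C$ and the $\PP(E_i)$. Since the $\PP(E_i)$ are CSC, in the torus--invariant K\"ahler classes the extremal equation on $M$ reduces, through the momentum/Abreu formalism, to a fourth--order boundary value problem for a convex symplectic potential on the momentum simplex, governed by the explicit data $(r_i,d_i,\mathbf g)$ and the class $2\pi c_1(\cL_{q,p})$ --- for two summands, an ODE with an explicit polynomial positivity condition. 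It then suffices to solve this for \emph{some} polarization: for $p/q$ in a suitable adiabatic range the relevant positivity can be checked directly, or one perturbs off the CSC metrics on the $\PP(E_i)$ by fibered--cscK arguments. To a large extent this is the content of \cite{ACGT}, which I would invoke and, where necessary, complete. (When $\mathbf g=0$ every bundle splits into line bundles, so $(\mathrm c)$ is automatic and $M$ is a toric fibration over $\C\PP^1$.)

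The implication $(\mathrm a)\Rightarrow(\mathrm b)$ is the extremal/relative analogue of the easy direction of the Yau--Tian--Donaldson correspondence: an extremal metric in $2\pi c_1(\cL)$ forces $(M,\cL)$ to be relatively K--polystable, by the arguments of Stoppa--Sz\'ekelyhidi, the passage from relative K--semistability to relative K--polystability being supplied by uniqueness of extremal metrics modulo $\Aut_0(M)$, in the spirit of \cite{stoppa}. For the present purpose it is enough to have this for the restricted class of test configurations compatible with the bundle structure, since the destabilizing configurations built below will be of that type. Throughout, ``relative'' refers to a fixed maximal torus $T\subset\Aut_0(M)$ and the usual subtraction of the contribution of the extremal vector field.

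For $(\mathrm b)\Rightarrow(\mathrm c)$ I would argue by contraposition. Assume $E$ is not a direct sum of stable bundles and decompose $E=\bigoplus_j F_j$ into indecomposables (Krull--Schmidt, after Atiyah); by hypothesis some $F_{j_0}$ is not stable. If $F_{j_0}$ is not semistable, take $E'\subset F_{j_0}$ its maximal destabilizing sub-bundle; if $F_{j_0}$ is semistable but not stable, take $E'\subset F_{j_0}$ a proper stable sub-bundle of the same slope from a Jordan--H\"older filtration. In either case the sub-fibration $\PP(E')\subset\PP(F_{j_0})\subset\PP(E)$ produces, by Ross--Thomas deformation to the normal cone with a parameter in the admissible range, a non-trivial test configuration for $(M,\cL)$. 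Since the maximal torus $T$ acts on the summand $F_{j_0}$ by a scalar, it preserves $\PP(E')$, so the configuration is $T$--equivariant; one then computes its relative Donaldson--Futaki invariant explicitly --- following the slope computations of \cite{RT} and tracking both the correction from the non-split extension class and the (numerically determined) contribution of the extremal vector field --- and finds it to be $\le 0$, with equality forcing $F_{j_0}$ to split off $E'$ as a direct summand, contrary to indecomposability unless $F_{j_0}=E'$ was already stable. Iterating over the $F_j$ and over successive sub-bundles, the process terminates only once $E$ has become a genuine direct sum of stable bundles; hence $(M,\cL)$ is relatively K--unstable unless $(\mathrm c)$ holds.

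The main obstacle will be $(\mathrm b)\Rightarrow(\mathrm c)$: one must exhibit a destabilizing test configuration that works uniformly in the polarization \emph{and} survives the relative normalisation. For a bundle that is only semistable--but--not--polystable, or indecomposable--but--not--stable, deformation to the normal cone need not destabilize outright, so the sub-bundle $E'$ and the blow--up parameter have to be chosen carefully, and one must verify that the extremal vector field of $(M,\cL)$ --- itself pinned down by the stable summands of $E$ of distinct slopes and by any repeated summands --- does not cancel the gain. A secondary difficulty, in $(\mathrm c)\Rightarrow(\mathrm a)$, is to guarantee solvability of the Abreu boundary value problem (equivalently, positivity of the relevant extremal polynomial) for at least one polarization when $\mathbf g\ge 1$, where the degrees $d_i$ constrain the admissible momentum interval.
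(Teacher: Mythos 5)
Your architecture coincides with the paper's: $(\mathrm c)\Rightarrow(\mathrm a)$ via the adiabatic existence results of \cite{ACGT,Bro} for $p/q\gg 0$, $(\mathrm a)\Rightarrow(\mathrm b)$ via Stoppa--Sz\'ekelyhidi, and all the real work in $(\mathrm b)\Rightarrow(\mathrm c)$, carried out by testing each indecomposable summand against its sub-bundles. Two remarks on the differences. First, your test configuration is not the one the paper uses: you propose deformation to the normal cone of $\PP(E')\subset\PP(E)$ \`a la Ross--Thomas, whereas the paper degenerates the extension class (following \cite[Remark 5.14]{RT} and Rollin), producing a test configuration whose central fibre is the \emph{smooth} ruled manifold $\PP\bigl(L\oplus U_0/L\oplus\bigoplus_{k\ge1}U_k\bigr)$ with an enlarged torus action. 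This choice is what makes the computation feasible: the relative Donaldson--Futaki invariant can then be evaluated either differential-geometrically via the generalized Calabi ansatz or algebraically via equivariant Riemann--Roch, and is shown to equal a positive multiple of $\mu(U_0/L)-\mu(L)$. With the normal-cone configuration the central fibre is singular, and identifying the induced extremal $\C^\times$-action and the inner products entering the relative normalisation there is a genuine additional obstacle that your plan does not address.

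Second, and more importantly, the step you yourself flag as ``the main obstacle'' is left entirely unproven, and it is precisely where all the content of the theorem lies. The claim that the relative Donaldson--Futaki invariant of your configuration is $\le 0$, uniformly in the polarization and after subtracting the extremal correction, is not a routine slope computation: in the paper this occupies Sections 3.2--3.6 and the appendix, and requires (i) solving the linear system determining the extremal field's coefficients $a_j$ as formal power series in $1/c$ (where $c$ parametrizes the polarization), (ii) showing that \emph{every} coefficient of the resulting expansion of the invariant is a multiple of $\mu(U_0/L)-\mu(L)$ (Proposition~\ref{mainprop}), and (iii) proving positivity of the remaining factor on the whole admissible range $c>\max_i\mu(V_i)$, not just adiabatically (Proposition~\ref{proppositivity} and Lemmas~\ref{lemma11}--\ref{lemma12}). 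Your proposal gives no mechanism for (ii) or (iii); in particular the equal-slope case (an indecomposable but strictly semistable summand, where the leading slope term vanishes) is handled in the paper by the observation that the configuration is normal and non-product, so a vanishing relative invariant already contradicts polystability --- your appeal to ``equality forcing $F_{j_0}$ to split off $E'$'' presupposes a product-configuration characterisation for the normal-cone degeneration that you would still need to establish. As written, $(\mathrm b)\Rightarrow(\mathrm c)$ is a programme rather than a proof.
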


We refer to \cite{gabor} for the notion of relative K-polystability. Our main result is the following theorem. 
\begin{thm}\label{thm:main}
 Conjecture \ref{c:extremal} is true.
\end{thm}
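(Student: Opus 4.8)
The plan is to establish the cyclic chain of implications $\mathrm{(a)}\Rightarrow\mathrm{(b)}\Rightarrow\mathrm{(c)}\Rightarrow\mathrm{(a)}$, which gives all pairwise equivalences. The genus zero case is disposed of first and separately: over $C=\C\PP^1$ every $E$ splits as a direct sum of line bundles, so (c) holds automatically, (a) holds by the admissible/momentum construction for $\PP(\bigoplus_i\cO(a_i))\to\C\PP^1$, and (b) then follows from (a) by the first implication; so I assume $\mathbf g\ge1$. The implication $\mathrm{(a)}\Rightarrow\mathrm{(b)}$ is the only analytically soft step: by the reductions recalled in the introduction (openness of the extremal K\"ahler cone, $H^{2,0}(M,\C)=0$, scaling invariance) an extremal K\"ahler metric on $M$ may be assumed to lie in an integral class $2\pi c_1(\cL_{q,p})$, so $(M,\cL_{q,p})$ is a polarized manifold carrying an extremal metric, whence $M$ is relatively K-polystable with respect to a maximal torus $T\subset\Aut(M,\cL_{q,p})$ (cf. \cite{gabor}) by the Stoppa--Sz\'ekelyhidi theorem.

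For $\mathrm{(c)}\Rightarrow\mathrm{(a)}$ I would place $M=\PP(E)$ inside the framework of \cite{ACGT}. Writing $E=\bigoplus_iE_i$ with $E_i$ stable and regrouping the summands by slope, $E=\bigoplus_\lambda V_\lambda$ with $V_\lambda$ polystable of slope $\lambda$, the Narasimhan--Seshadri theorem endows each $V_\lambda$ with a projectively flat unitary connection, so that $\PP(V_\lambda)\to C$ is a flat $\C\PP^{n_\lambda-1}$-bundle ($n_\lambda=\rk V_\lambda$) and $M=\PP\bigl(\bigoplus_\lambda V_\lambda\bigr)$ is a rigid semisimple toric fibration over $C$, with fibre the toric manifold whose Delzant polytope is the simplex $\Delta^{r-1}$ ($r=\#\{\lambda\}$) carrying the lattice weights that record the $n_\lambda$. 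On the K\"ahler classes compatible with this structure, \cite{ACGT} reduces the existence of an extremal metric to the solvability of an explicit fourth-order boundary value problem for a symplectic potential on $\Delta^{r-1}$, equivalently to the positivity, on the relevant chamber of the K\"ahler cone, of an ``extremal polynomial'' assembled from $\mathbf g$, the ranks $n_\lambda$ and the degrees $\deg V_\lambda$; this positivity is the ingredient left open in \cite{ACGT}. I would establish it by working in the regime $p/q\gg0$, where $\cL_{q,p}$ is positive and the fibre geometry dominates: there the extremal polynomial degenerates to the one governing the fibre alone, positive because $\C\PP^{N-1}$ ($N=\sum_\lambda n_\lambda$) has constant scalar curvature, and a uniform estimate in the numerical data shows it remains positive.

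The heart of the theorem is $\mathrm{(b)}\Rightarrow\mathrm{(c)}$, which I would prove contrapositively: if $E$ is not a direct sum of stable bundles, then $M=\PP(E)$ is not relatively K-polystable for \emph{any} polarization. The algebraic input is that $E$ fails to be a direct sum of stable bundles exactly when one of its canonical filtrations fails to split holomorphically --- either the Harder--Narasimhan filtration $0=F_0\subset\cdots\subset F_k=E$, or, when that one splits, a Jordan--H\"older filtration of one of its semistable quotients $G_i=F_i/F_{i-1}$. A non-split such filtration produces, via a Rees-type construction, a $\C^\times$-family $\{E_t\}_{t\in\C}$ with $E_1=E$ and $E_0$ the associated graded bundle, hence a $T$-equivariant test configuration $(\mathcal X,\cL_{q,p})$ for $(M,\cL_{q,p})$ with central fibre $\PP(E_0)$; it is non-product since $E\not\cong E_0$ and twisting by a line bundle cannot undo a non-split extension. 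One then computes the relative Donaldson--Futaki invariant of $(\mathcal X,\cL_{q,p})$ and reduces it to a polynomial in $q,p,\mathbf g$ and the ranks and degrees of the graded pieces. If the Harder--Narasimhan filtration is itself non-split, the ordinary Donaldson--Futaki invariant is already negative, by the slope inequality $\mu(F_1)>\mu(E)$ and the slope-instability computation underlying \cite{RT}; if it splits but some $G_i$ is not polystable, the ordinary invariant of the Jordan--H\"older degeneration vanishes (all its graded pieces have slope $\mu(G_i)$), and one argues instead that the configuration is non-product while its correction along the extremal vector field is zero --- that field lies in the torus of slope-rescalings of the Harder--Narasimhan factors and is orthogonal, for the Futaki--Mabuchi pairing, to a degeneration internal to a single factor.

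The main obstacle I anticipate is this last step, and in it two points stand out. First, the passage from the \emph{ordinary} to the \emph{relative} Donaldson--Futaki invariant: one must separate the ``product'' part of the instability --- the part absorbed by $T$, which is precisely what allows a direct sum of stable bundles of \emph{distinct} slopes, never K-polystable by \cite{RT}, to be nonetheless relatively K-polystable --- from the genuinely non-split part that survives in the relative theory, and one must control the sign of the correction term along the extremal field so that it cannot cancel a negative ordinary invariant. Second, one must carry the computation through uniformly in the polarization, and, for $\mathbf g=1$, check that the possible extra compact torus of base translations in $\Aut(M)$ --- which contributes no one-parameter subgroups, hence no test configurations, and no component to the extremal field --- does not interfere. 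This relative Donaldson--Futaki bookkeeping, together with the positivity verification needed in $\mathrm{(c)}\Rightarrow\mathrm{(a)}$, is where the essential work lies.
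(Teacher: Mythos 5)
Your treatment of the peripheral implications matches the paper: genus zero is set aside because $E$ splits into line bundles there, $(a)\Rightarrow(b)$ is the Stoppa--Sz\'ekelyhidi theorem, and $(c)\Rightarrow(a)$ is the adiabatic existence result for $p/q\gg 0$ (the paper simply cites \cite[Theorem 3]{ACGT} and \cite{Bro} rather than re-deriving positivity of the extremal polynomial). The core implication $(b)\Rightarrow(c)$ is where your proposal has a genuine gap. The paper does not argue through the Harder--Narasimhan/Jordan--H\"older dichotomy; it fixes the decomposition $E=\bigoplus_k U_k$ into indecomposables and, for an \emph{arbitrary} strict sub-bundle $L\subset U_0$, builds the test configuration that scales the extension class of $0\to L\to U_0\to U_0/L\to 0$, with central fibre $\PP\big(L\oplus(U_0/L)\oplus\bigoplus_{k\ge1}U_k\big)$. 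The whole theorem then rests on the explicit identity (Propositions~\ref{mainprop} and~\ref{proppositivity}) that the relative Donaldson--Futaki invariant of this configuration equals $(\mu(U_0/L)-\mu(L))$ times a quantity shown to be positive for every admissible polarization; relative K-polystability then forces $\mu(L)<\mu(U_0)$ for all $L$, i.e.\ stability of each $U_k$. You correctly identify that controlling the correction term along the extremal field is ``where the essential work lies'' --- but that work (the computation of the $\alpha$'s and $\beta$'s, the linear system determining the extremal field coefficients $a_j$, the expansion in the polarization parameter $c$, and the positivity Lemmas~\ref{lemma11}--\ref{lemma12}) \emph{is} the proof, and your proposal defers rather than supplies it.

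Two of the intermediate claims you do make are moreover incorrect. First, a negative \emph{ordinary} Donaldson--Futaki invariant for a Harder--Narasimhan-type degeneration does not by itself contradict relative K-polystability: the correction $\langle\rho_{\rm ex},\rho\rangle\,\mathfrak F(\rho_{\rm ex})/\langle\rho_{\rm ex},\rho_{\rm ex}\rangle$ can have either sign, and indeed a sum of stable bundles of distinct slopes has nonvanishing ordinary Futaki invariants yet is relatively K-polystable for suitable polarizations. Second, for a Jordan--H\"older-type degeneration inside one summand of a bundle with several slopes, neither the ordinary invariant nor the Futaki--Mabuchi pairing with the extremal field vanishes: by Lemma~\ref{betakalpha0} the ordinary invariant is proportional to $\mu(E)-\mu(L)$ (not to $\mu(U_0)-\mu(L)$), the cross terms $\alpha_{j1}-\alpha_1\alpha_j/\alpha_0$ are nonzero, and it is only the full relative invariant that collapses to a multiple of $\mu(U_0)-\mu(L)$. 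The cancellation you assert by an orthogonality heuristic is exactly the nontrivial identity that has to be proved.
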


We do now some comments. Firstly, Conjecture  \ref{c:extremal} is almost optimal in view of the existence problem of extremal K\"ahler metric. Actually in the light of Theorem \ref{thm:csc}, it is natural to ask if Conjecture \ref{c:extremal} could be completed by 
\begin{itemize}
 \item[(d)] $M$ admits an extremal K\"ahler metric in any K\"ahler class;
 \item[(e)] $M$ is relative K-polystable polarization for any polarization $\cL$;
\end{itemize}
But this does not hold.  In general, with $E$ direct sum of stable bundles,  $M$ may also admit polarizations which  are not relatively K-polystable, nor extremal, see e.g. \cite[Proposition 5]{ACGT} or \cite[Theorem 6]{ACGT-0}. To strengthen  Conjecture  \ref{c:extremal}, it would be natural to ask that conditions (a) and (b) occur precisely for the {\it same} classes. This is precisely the Yau--Tian--Donaldson conjecture extended to the setting of extremal K\"ahler metrics by Sz\'ekelyhidi~\cite{gabor}.
\begin{conj}[Relative Yau-Tian-Donaldson conjecture]\label{c:YTD} For any polarization $\cL$ on $M=\PP(E)\to C$, the following two conditions are equivalent
\begin{enumerate}
\item[\rm (a')] $2\pi c_1(\cL)$ is extremal;
\item[\rm (b')] $(M, \cL)$ is relatively K-polystable. 
\end{enumerate}
\end{conj}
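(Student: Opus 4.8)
The plan is to show that, for a fixed polarization $\cL$ on $M=\PP(E)\to C$, both conditions (a') and (b') are equivalent to the positivity of a single explicit \emph{extremal polynomial} $F_{\cL}$, thereby closing the loop $\text{(a')}\Rightarrow\text{(b')}\Rightarrow (F_{\cL}>0)\Rightarrow\text{(a')}$, which forces the three properties to coincide.

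First I would dispose of the case in which $E$ is \emph{not} a direct sum of stable bundles. By Theorem~\ref{thm:main}, condition (c) then fails, so $M$ carries no extremal Kähler metric in \emph{any} class and is relatively K-polystable for \emph{no} polarization; hence (a') and (b') are simultaneously false for every $\cL$ and the equivalence holds vacuously. Thus we may assume $E=\bigoplus_{i=0}^k E_i$ with each $E_i$ stable.

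The implication (a')$\Rightarrow$(b') I would obtain from the general principle that the existence of an extremal metric in $2\pi c_1(\cL)$ forces relative K-polystability of $(M,\cL)$ with respect to a maximal torus of automorphisms \cite{gabor,stoppa}; this holds per polarization and needs no bundle-specific input beyond identifying the reduced automorphism group. For the implication $(F_{\cL}>0)\Rightarrow$(a') I would invoke the analytic side of \cite{ACGT}: under assumption (c), $M$ is a semisimple principal (rigid toric) fibration over $C$ whose fibre is $\PP^{r-1}$ carrying the $T=(\C^*)^k$-action induced by the splitting, with momentum polytope the standard simplex $\Delta\subset\R^k$, the base data being the constant-scalar-curvature Kähler structure furnished by the Hermite--Einstein (Narasimhan--Seshadri) metrics on the stable summands $E_i$. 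In this setting the extremal equation for admissible metrics in $2\pi c_1(\cL)$ reduces to an explicitly solvable ODE boundary-value problem on $\Delta$, whose solvability is equivalent to positivity on the interior of $\Delta$ of the extremal polynomial $F_{\cL}$, a polynomial determined by $q,p$, the ranks and degrees of the $E_i$, and the genus ${\bf g}$. Positivity of $F_{\cL}$ therefore yields an admissible extremal metric in the given class.

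It remains to prove $\text{(b')}\Rightarrow(F_{\cL}>0)$, which I expect to be the principal obstacle. The strategy is to reduce relative K-polystability to testing against \emph{admissible} test configurations only: every convex piecewise-linear function on $\Delta$, coupled with the fixed base data of the stable summands, defines such a configuration, and a direct computation shows that its relative Donaldson--Futaki invariant equals, up to a positive factor, the integral of that function against $F_{\cL}$ over $\Delta$ (a boundary term minus a weighted interior term). If relative K-polystability could be tested on these configurations alone, non-negativity of all such invariants would force $F_{\cL}\ge 0$, and the polystable (equality-only-for-products) clause would upgrade this to strict positivity in the interior. The genuinely hard point is thus the relative, fibered analogue of Donaldson's reduction of K-stability of toric varieties to toric degenerations: one must show that an arbitrary, possibly non-equivariant, test configuration for $(M,\cL)$ cannot destabilize it more than the admissible ones do. Over a base curve of positive genus this requires a fibrewise toric symmetrization argument combined with control of the horizontal contribution to the Donaldson--Futaki invariant coming from stability of the $E_i$, all while keeping track of the relative Futaki term that twists the invariant. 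This reduction, rather than the ODE existence theory of \cite{ACGT} which settles the analytic direction, is where the main difficulty lies.
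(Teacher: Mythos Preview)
The statement you are attempting to prove is stated in the paper as a \emph{conjecture} and is \emph{not} proved there in full generality. The paper establishes it only in the special cases listed in Corollary~\ref{cor1} (namely $s\le 2$, $E$ polystable, some $U_i$ unstable, or $p/q\gg 0$), and the remark following Corollary~\ref{cor1} explains why the remaining cases ($s\ge 3$, all $U_i$ stable of distinct slopes, $p/q$ not large) are expected to require a strictly stronger notion of stability than algebraic relative K-polystability. So there is no ``paper's own proof'' to compare against; what matters is whether your proposal actually closes the gap. It does not, for two concrete reasons.

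First, your analytic implication $(F_{\cL}>0)\Rightarrow\text{(a')}$ rests on the assertion that the extremal equation for admissible metrics ``reduces to an explicitly solvable ODE boundary-value problem on $\Delta$''. This is only true when the simplex $\Delta$ is an interval, i.e.\ when $s=2$; that is exactly the situation of \cite{ACGT-0}, where one has a genuine extremal polynomial $F_{\Omega}(z)$ on $(-1,1)$ whose positivity is equivalent to existence. For $s\ge 3$ the fibre is $\C\PP^{s-1}$ with $\dim\Delta=s-1\ge 2$, the equation is a PDE on the simplex, and \cite{ACGT} does \emph{not} provide a single explicit polynomial whose positivity characterises existence of an extremal metric in the given class. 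What \cite{ACGT} (and \cite{Bro}) yield for $s\ge 3$ is existence only for $p/q\gg 0$, which is precisely why case (4) of Corollary~\ref{cor1} is stated the way it is.

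Second, and more seriously, the implication $\text{(b')}\Rightarrow(F_{\cL}>0)$ is where your argument becomes a wish rather than a proof. You correctly identify the needed step as a ``relative, fibered analogue of Donaldson's reduction of K-stability of toric varieties to toric degenerations'', but you do not carry it out; you only describe what such a reduction would have to accomplish. The paper's own remark after Corollary~\ref{cor1} points out, citing \cite{ACGT-0}, that there is concrete evidence that algebraic relative K-polystability (test configurations with genuine line bundles) is \emph{too weak} to recover positivity of the extremal data in the missing range: one appears to need either K\"ahler relative K-stability \cite{SD2016,DR2016,dervan-2} or uniform K-stability \cite{Sz2015,BHJ,dervan-1}, which allow irrational polarizations. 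Even in the $s=2$ case the paper has to use a rationality argument (the extremal polynomial has rational coefficients when $\cL$ is rational, so positivity on $(-1,1)\cap\Q$ suffices) to pass from algebraic K-polystability to positivity of $F_{\Omega}$; no analogous rationality trick is available for $s\ge 3$. In short, the step you label ``the principal obstacle'' is exactly the open content of the conjecture, and your proposal does not supply it.
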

By virtue of \cite{gabor-stoppa} (see also \cite[Theorem~1.2]{dervan-2}), we know that (a') implies (b')  on any polarized variety. In direction of this conjecture, Theorem~\ref{thm:main}  combined with some previous results allows us to establish the following

\begin{cor}\label{cor1}
 Let $E\to C$ a holomorphic vector bundle over a complex curve $C$ and write  $E=U_1\oplus \cdots \oplus U_s$ as a direct sum of indecomposable sub-bundles.
 The relative Yau--Tian--Donaldson conjecture is true for a polarization $\cL_{q,p}$ on $\PP(E)$ in the following cases:
 \begin{enumerate}
          \item $s=1$ or $s=2$;  
     \item $s\geq 3$ and $E$ is polystable;
     \item  $s\geq 3$ and one of the $U_i$ is unstable;
     \item  $s\geq 3$ and $p/q$ is large enough.
        \end{enumerate}
\end{cor}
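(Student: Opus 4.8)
The plan is to combine Theorem~\ref{thm:main} with the known implication (a')$\Rightarrow$(b') (from \cite{gabor-stoppa}) and to produce, in each listed case, the reverse implication (b')$\Rightarrow$(a') by a matching argument: if $(M,\cL_{q,p})$ is relatively K-polystable for the given class, we must show $2\pi c_1(\cL_{q,p})$ carries an extremal metric. The first step is to observe that relative K-polystability of $(M,\cL_{q,p})$ for \emph{some} polarization forces, via Theorem~\ref{thm:main}, that $E$ is a direct sum of stable bundles; hence in cases (1)--(4) we may always assume $E=U_1\oplus\cdots\oplus U_s$ with each $U_i$ stable, and $M=\PP(E)$ admits \emph{some} extremal K\"ahler class. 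The remaining task is to upgrade "some class" to "the given class $2\pi c_1(\cL_{q,p})$", which is where the case division enters.

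For case (1), $s=1$ or $s=2$: when $s=1$, $E$ itself is stable, $\PP(E)$ is a CSC K\"ahler manifold projectivisation (by Theorem~\ref{thm:csc}), and the reduced automorphism group is trivial so relative K-polystability coincides with K-polystability; then Theorem~\ref{thm:csc} (or rather the class-by-class YTD statement for ruled surfaces/bundles over curves established in \cite{ACGT,RT}) gives extremality in \emph{every} class, in particular the given one. When $s=2$, $\PP(U_1\oplus U_2)$ is an admissible manifold in the sense of \cite{ACGT} with a rigid Hamiltonian $2$-form of order one; here the relevant YTD correspondence is exactly \cite[Theorem~2]{ACGT} (or its reformulation via \cite{ACGT-0}), which says that the admissible class $2\pi c_1(\cL_{q,p})$ is extremal if and only if the associated "extremal polynomial" is positive on the relevant interval, and one checks this positivity condition is precisely the numerical/relative-K-stability inequality forced by (b'). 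For case (2), $E$ polystable with $s\ge 3$: then $\PP(E)$ admits a CSC metric in every polarization by Theorem~\ref{thm:csc}, so (a') holds unconditionally and the equivalence is trivial. For case (3), one of the $U_i$ unstable with $s\ge 3$: by Theorem~\ref{thm:main}, no polarization can then be relatively K-polystable (since (b) fails, so does (c)); but we must still check (a') fails too for $\cL_{q,p}$, which follows because extremality of any class would, by the openness argument and \cite{gabor-stoppa}, give relative K-polystability of a polarization, contradicting Theorem~\ref{thm:main} — so both (a') and (b') are false and the equivalence is vacuously true. For case (4), $s\ge 3$ and $p/q\gg 0$: here one uses that for $p/q$ large the base curve "dominates" and $\PP(E)$ becomes, in the adiabatic limit, close to a product $C\times\PP^{r}$-type situation; the relevant statement is that for $p/q$ large the existence of an extremal metric in $2\pi c_1(\cL_{q,p})$ is governed by an explicit positivity condition on an interval (again an admissible-type or fibration-type computation, cf. \cite{ACGT,ACGT-0}), which matches the relative K-stability inequality tested against the standard fibration-type test configurations — and these are known to be destabilizing whenever the condition fails.

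The main obstacle is case (4): unlike $s\le 2$ where the admissible machinery of \cite{ACGT} directly supplies a class-by-class equivalence, for $s\ge 3$ the manifold $\PP(E)$ is \emph{not} of admissible type and one does not have an a priori Hamiltonian $2$-form; one must instead argue that in the large $p/q$ regime the extremal equation reduces (via a gluing or implicit-function argument perturbing off the CSC/product model, as in \cite{stoppa}-type arguments) to a finite-dimensional obstruction that is \emph{exactly} the relative Futaki/K-stability obstruction, so that (a') and (b') toggle together. Concretely, the hard part is verifying that the full space of relatively destabilizing test configurations for $\cL_{q,p}$ with $p/q$ large is exhausted (up to what is needed) by the "obvious" ones coming from the decomposition $E=\bigoplus U_i$ and from pull-backs of test configurations on $C$ — i.e. a reduction-of-test-configurations lemma — after which positivity of the relevant extremal polynomial on the correct interval closes the loop. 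I would carry out the four cases in the order (2), (3), (1), (4): the first two are essentially immediate from Theorem~\ref{thm:csc} and Theorem~\ref{thm:main}, case (1) invokes the admissible-manifold YTD theorem of \cite{ACGT} verbatim, and case (4) requires the adiabatic-limit/reduction argument sketched above.
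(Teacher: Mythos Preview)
Your treatment of cases (2) and (3) is correct and matches the paper. Case (1) with $s=1$ is also essentially right. There are two places where your plan diverges from the paper, one minor and one substantive.

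For $s=2$ in case (1), you correctly point toward the extremal-polynomial criterion of \cite{ACGT-0}, but you skip a step the paper makes explicit: \cite[Theorem~2]{ACGT-0} a priori links positivity of $F_{\Omega}(z)$ on $(-1,1)$ only to \emph{K\"ahler} relative K-polystability; ordinary (algebraic) relative K-polystability only forces $F_{\Omega}(z)>0$ on $(-1,1)\cap\Q$. The paper observes that, over a curve and for a rational class $\Omega=2\pi c_1(\cL_{q,p})$, the parameters entering $F_{\Omega}$ are rational, so positivity on rational points already implies positivity on all of $(-1,1)$. This is small but necessary to close the loop with algebraic K-polystability.

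The substantive issue is your case (4). You frame the large-$p/q$ regime as the ``main obstacle,'' requiring a reduction-of-test-configurations lemma and an adiabatic matching of the extremal equation with the relative-Futaki obstruction. None of this is needed, and the lemma you describe is not available. The paper's argument is a one-liner: assume $(M,\cL_{q,p})$ is relatively K-polystable; by Theorem~\ref{thm:main} each $U_i$ is stable; then \cite[Theorem~3]{ACGT} (or the main result of \cite{Bro}) already furnishes a threshold $c_0$ such that \emph{every} class with $p/q>c_0$ admits an extremal metric. Hence for $p/q$ large enough, (a') holds outright and the equivalence follows. There is no further analysis of test configurations beyond what Theorem~\ref{thm:main} already consumed, and no implicit-function or gluing argument; the existence for large $p/q$ is an off-the-shelf citation once the $U_i$ are known to be stable. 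Your imagined obstacle does not exist.
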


\begin{rem}
An example from \cite{ACGT-0} strongly suggests that for reaching the remaining cases ($s\geq 3$, $U_i$ stables of different slopes and $p/q$ not large),  one would  need  to  enforce  the notion of relative K-polystability of $(M, \cL)$. This would require to consider test configurations  with ``irrational''  line bundles (i.e. formal tensor powers of line bundles with real coefficients).  There are two current approaches to this.  The first is the notion of {\it K\"ahler} relative K-stability, which originates in \cite{RT} and was recently  developed in \cite{SD2016,DR2016,dervan-2}. The other one  is  the  notion of {\it uniform} relative K-stability, as introduced in \cite{Sz2015, BHJ, dervan-1}. 
\end{rem}

\smallskip

Eventually, one would expect that some of the results discussed above can be extended to projective holomorphic vector bundles $M=\PP(E) \to B$ over a base $(B, L_B)$ which itself is a polarized variety admitting an extremal K\"ahler metric in $2\pi c_1(L_B)$. This is evidenced in the works \cite{hong, Bro,LSeyy,KR1}.

\smallskip

We sum up now the general structure of the paper. In Section \ref{Preliminaries} we present the required material about (relative) Donaldson--Futaki invariant. In Section \ref{mainsect}, we construct a test-configuration and compute by two different ways the associated relative Donaldson--Futaki invariant (Sections \ref{s:calabi-anzats}, \ref{computgc} and \ref{sectDF}). Our  first approach is based on differential-geometric ingredients from \cite{ACGT} and has the advantage to apply to any K\"ahler
class (rational or not), thus evidencing the K\"ahler feature of the K-stability in line with the recent work \cite{dervan-2}. The second approach is algebro-geometric, following the original arguments in \cite{RT}, and has the merit  to cover the case when the genus of $C$ equals 1. The proof of our main result is then given at the end of Section \ref{generalcase}.  The proof of Corollary \ref{cor1} is obtained in Section \ref{sectcor}. The appendix (Section \ref{appendix}) contains certain technical results.

\section{Preliminaries}\label{Preliminaries}
\subsection{The Donaldson--Futaki invariant} 
Suppose $(M, \cL)$ is a polarized variety endowed with a $\C^{\times}$ action $\rho$ with a lift to $\cL$.  Let $A^{\rho}_k$ be the infinitesimal generator of the induced linear action on the vector space $H_k:= H^0(M, \cL^k)$ of holomorphic sections of $\cL^{k}$, and denote by
$$d_k := {\rm dim} \  H_k, \  \ w_k(\rho):= \mathrm{Tr}\,A^{\rho}_{k}.$$
It turns out (by using Riemann--Roch) that for $k\gg 0$, $d_k$ and $w_k$  are polynomials 
$$d_k = a_0 k^{n} +  b_{0} k^{n-1} + O(k^{n-2}), \ \  w_k(\rho) = a(\rho)  k^{n+1} + b(\rho) k^n + O(k^{n-1}).$$
We then define 
\begin{defn} The {\it algebraic Donaldson--Futaki invariant} of $(M, \cL, \rho)$  is  
$$\mathfrak{F}^{alg}(\rho):= \frac{a_0b(\rho)- a(\rho)b_0}{a_0}.$$
\end{defn}
We shall use this definition in the case when $(M, \cL)$ is a smooth polarized variety. We notice that there are different sign choices in the literature  for the infinitesimal generator of the induced linear action on $H_k$,  thus introducing a sign difference in the definition of the (algebraic) Donaldson--Futaki invariant, see e.g. \cite[p. 141]{gabor-book}. We shall use in this paper the following convention,   which  agrees with  \cite{gauduchon-book} and,  up to a positive constant,  with \cite[(7.14)]{gabor-book}.
\begin{defn}\label{d:convention} Let $(M, \cL)$ be a smooth polarized variety endowed with a $\C^{\times}$ action $\rho$ with a lift to $\cL$,  denoted by $\hat \rho$.  We  let $e^{\sqrt{-1}t}, t \in {\R}$ be the circle subgroup of $\C^{\times}$. Then, the {\it infinitesimal generator} $A^{\rho}$ for the action of $\hat \rho$ on the space of smooth sections $\Gamma(\cL)$ is defined to be
$$(A^{\rho} (s))(x) := \sqrt{-1} \frac{d}{dt}_{|_{t=0}} \Big(\hat \rho(e^{\sqrt{-1}t})\big(s(\rho(e^{-\sqrt{-1}t})(x))\big) \Big), \ \ s \in \Gamma(\cL), x\in M.$$
\end{defn}
It is shown in \cite{Do2} 
(see also \cite{gauduchon-book, gabor-book}) that  the above definition  agrees, up to a  factor of $\frac{1}{4(2\pi)^n}$,  with the differential geometric definition of the Futaki invariant \cite{futaki}, i.e. 
$$4 (2\pi)^n \mathfrak{F}^{alg}(\rho) = \mathfrak{F}_{\Omega}(K_{\rho}) = \int_{M}{\rm Scal_g} f_{\rho} v_g, $$
where $n= {\rm dim}_{\C} M$, $K_{\rho}$ is the (real) holomorphic vector field on $M$ induced by the action of  $S^1$ via $\rho$,  $\Omega= 2\pi c_1(\cL)$ is the K\"ahler class determined by $\cL$, $g$ is any $S^1$-invariant K\"ahler metric in $\Omega$, ${\rm Scal}_g$ is its scalar curvature, and $f_{\rho}$ denotes the Killing potential  of mean value zero for $K_{\rho}$  with respect to $g$. 

\subsection{Test configurations and K-polystability} 
Recall the following definitions from \cite{Tian2} and \cite{Do2}.
\begin{defn}\label{d:test-configuration} Let $(M,\cL)$ be a normal polarized variety. A {\it test configuration} for $(M, \cL)$ is a normal variety $\mathcal{M}$ endowed with a line bundle $\mathcal P$ together with 
\begin{enumerate}
\item[\rm (i)] a $\C^{\times}$ action $\rho$ on $\mathcal M$ with a lift to $\cP$;
\item[\rm (ii)] a  $\C^{\times}$ equivariant map $\pi_{\C}: \mathcal M \to \C$ where $\C^{\times}$ acts on a standard way on $\C$,
\end{enumerate}
such that $\pi_{\C} : \mathcal M \to \C$ is a flat family  with  $\mathcal{P}$  being relatively ample and,  for any $t\neq 0$,  the fibre $(M_t, {\mathcal P}_{M_t})$ of $\pi_{\C}$ is isomorphic to $(M, \cL^r)$ for some fixed $r\in \N$. The number $r$ is called  {\it exponent} of the test configuration.

A test configuration is said to be a {\it product configuration} if $\cM=M\times
\C$ and $\rho$ is given by a $\C\mult$ action on $M$ (and scalar
multiplication on $\C$).
\end{defn}
Notice that  for any test configuration $(\cM, \cP, \rho)$ for $(M, \cL)$,  $\rho$  induces a $\C^{\times}$ action on the central fibre $(M_0, \cL_0)$ (which we still denote by $\rho$). With our convention in Definition~\ref{d:convention}, we then  have 
\begin{defn}\cite{Do2}
The {\it Donaldson--Futaki invariant  of the test configuration} $(\mathcal{M}, \cP, \rho)$ for $(M, \cL)$  is  the Donaldson--Futaki invariant $\mathfrak{F}^{alg}(\rho)$ of its central fibre $(M_0, \cL_0)$. The variety $(M,\cL)$ is said to be K-{\it polystable}  (resp. K-stable) if the Donaldson--Futaki
invariant of any normal  test configuration for $(M, \cL)$ is non-negative, and equal to zero if and
only if the test configuration is a product configuration (resp a trivial test configuration).
\end{defn}
This implies in particular that the Donaldson--Futaki invariant of any $\C^{\times}$ action on $(M, \cL)$  must be zero, so the notion is adapted to the study of cscK (in particular K\"ahler--Einstein)  metrics. 

\subsection{Relative K-polystability}\label{sectRK} In order to account for the obstructions related to the extremal K\"ahler metrics, G. Sz\'ekelyhidi has introduced relative version of the above notions  as follows.

Suppose $(M, \cL)$ is a polarized variety endowed with two commuting  $\C^{\times}$ actions $\rho_1$ and $\rho_2$.  We first define an inner product $\langle \rho_1, \rho_2 \rangle$ for such actions. For that, we take lifts of $\rho_1$ and $\rho_2$  to $\cL$ and  consider the  infinitesimal generators $A^{\rho_1}_k$ and $A^{\rho_2}_k$ of the actions on
$H_{k}$. Then for $k$ sufficiently large, 
$$\mathrm{Tr}\,(A^{\rho_1}_k A^{\rho}_2)= a(\rho_1,\rho_2)k^{n+2} + O(k^{n+1})$$ is a
polynomial of degree at most $n+2$ and we let
\begin{defn} The {\it inner product} of two commuting $\C^{\times}$ actions $\rho_1$ and $\rho_2$ on $(M, \cL)$ is defined by
$$\langle \rho_1, \rho_2 \rangle := a(\rho_1,\rho_2) - \frac{a(\rho_1) a(\rho_2)}{a_0}.$$
\end{defn}
Notice that $\langle \rho_1, \rho_2 \rangle$ is the leading coefficient of the expansion in $k$ of $\mathrm{Tr}\,(\mathring{A}^{\rho_1}_k \mathring{A}_k^{\rho_2})$ of the traceless parts of $A^{\rho_i}_k$, so it is independent of the choice of liftings. It is shown in \cite{gabor} that when $M$ is smooth, the above definition agrees  up to a factor of $1/(2\pi)^n$, with the Futaki--Mabuchi bilinear form on Killing potentials, i.e. if for any K\"ahler metric  $g$ in $\Omega= 2\pi c_1(\cL)$ which is invariant under the $S^1$ actions corresponding to $\rho_1$ and $\rho_2$ we denote by $f_{\rho_1}$ and $f_{\rho_2}$ the Killing potentials of zero mean with respect to $g$,   corresponding the induced Killing vector fields, then
$$\langle \rho_1, \rho_2 \rangle = \frac{1}{(2\pi)^n} \int_M f_{\rho_1} f_{\rho_2} v_g.$$
We shall next fix a maximal torus $\T^{\ell}$ in the automorphism group ${\rm Aut}(M,\cL)$ and denote by $\rho_1, \ldots, \rho_{\ell}$ the $\C^{\times}$ corresponding to the $S^1$ generators of $\T^{\ell}$.
\begin{defn} The {\it extremal} $\C^{\times}$ action  $\rho_{\rm ex}$ of $(M, \cL, \T^{\ell})$ is a $\C^{\times}$ subgroup of the complexification $\T^{\ell}_c$ defined by the system of $\ell$ linear conditions
$$\langle \rho_{\rm ex}, \rho_i \rangle = \mathfrak{F}^{alg}(\rho_i).$$
\end{defn}

\begin{defn} Let $\rho_0$ be a distinguished $\C^{\times}$ action on the polarized manifold $(M, \cL)$. The {\it $\rho_0$-relative  Donaldson--Futaki invariant}  of $(M, \cL, \rho_0)$ is defined for any $\C^{\times}$ action $\rho$ commuting with $\rho_0$ by
\begin{equation*}
{\mathfrak F}^{alg}_{\rho_0}(\rho) := {\mathfrak F}^{alg} (\rho) - \frac{\langle \rho_0, \rho \rangle}{\langle \rho_0, \rho_0\rangle}{\mathfrak F}^{alg}(\rho_0).
\end{equation*}
\end{defn}

We now apply the above to  a test configuration.

\begin{defn} \label{d:relative-futaki-gabor} \cite{gabor,stoppa-corrigendum}
A test configuration $(\cM,\cP, \rho)$ for $(M,\cL)$ is {\it compatible} with  a  fixed maximal torus $\T^{\ell} \subset {\rm Aut}(M, \cL)$  if
there is a $\T^{\ell}$ action on $(\cM, \cP)$, commuting with $\rho$ and preserving 
$\pi_{\C} \colon \cM\to\C$, which induces  the trivial action on $\C$,  and 
restricted to $(M_t,\cP\restr{M_t})$ for $t\neq 0$ coincides with the original $\T^{\ell}$ action  under the isomorphism with $(M,L)$ via $\rho$.

In this case,  we have an induced action of $\T^{\ell}$ on the central fibre $M_0$, and we denote by $\rho_{\rm ex}^M$ the $\C^{\times}$ action on $M_0$ corresponding to the extremal $\C^{\times}$ action $\rho_{\rm ex}$ on $(M, \cL, \T^{\ell})$. 

Now, the {\it relative Donaldson--Futaki invariant} of  a $\T^{\ell}$ compatible test configuration  for $(M,\cL, \T^{\ell}))$  is defined to be the  $\rho^M_{\rm ex}$-relative Donaldson--Futaki invariant of $(M_0, \cL_0)$ of the induced $\C^{\times}$ action $\rho$.

A polarized variety manifold $(M,\cL)$    is
relatively K-{\it polystable} (resp. K-stable)  with respect to a maximal torus $\T^{\ell} \subset {\rm Aut}(M, \cL)$ if the relative Donaldson--Futaki invariant
of any normal  test configuration $(\cM,\cP, \rho)$ for $(M, \cL)$ compatible with $\T^{\ell}$ is non-negative,
and equal to zero if and only if $(\cM,\cP)$ is a product configuration (resp a trivial configuration). 
\end{defn}

\begin{rem}\label{r:new-Futaki} More recently, following the works of Wang \cite{Wang4} and Odaka~\cite{Odaka1, Odaka2},   a topological interpretation  of the Donaldson--Futaki invariant was given in terms of an integration over the total space of a given test configuration. Among other applications, this point of view  led to the definition of the stronger notion of  {\it K\"ahler} (relative) K-polystability in \cite{SD2016,DR2016, dervan-2},  where one also takes in consideration the sign of the (relative) Donaldson--Futaki  over ``irrational''  polarizations $\mathcal L$ of $M$. We shall not use this point of view explicitly in this paper.  However, the Reader could notice that our differential-geometric approach to the computation of the relative Donaldson--Futaki invariant is well-adapted to deal with the K\"ahler relative K-polystability in the sense of \cite{dervan-2}.
\end{rem}

\section{Proof of Theorem~\ref{thm:main} and Corollary \ref{cor1}}\label{mainsect}

One direction of  Theorem~\ref{thm:main}, namely $(c)\Longrightarrow (a)$, follows from the facts that an extremal  K\"ahler metric exists  in any  polarizations $\cL=\cL_{q,p}$ with $p/q \gg 0$ (see \cite[Theorem 3]{ACGT}  or \cite{Bro}). Moreover, $(a) \Longrightarrow (b)$  is a consequence of the general result of \cite{gabor-stoppa}, i.e the existence of an extremal K\"ahler metric in $2\pi c_1(\cL)$ implies that $(M, \cL)$ is relative K-polystable. We shall thus focus on establishing $(b)\Longrightarrow (c)$. As any vector bundle $E$ over $\C \PP^1$ decomposes as the direct  sum of line bundles (which are automatically stable),  we shall also assume  from now on  that the base $C$ has genus ${\bf g} \ge 1$.   
\begin{assumption} $C$ is a compact complex curve of genus ${\bf g} \geq 1$.
\end{assumption}

As  the cohomology $H^2(M, \R)$  of $M=\PP(E)$  is $2$-dimensional, up to rescaling, the K\"ahler cone of $M$ is $1$-dimensional. Similarly, it is well-known  that any holomorphic line bundle $\cL$ on $M$ can be written as  
$$\cL= \cL_{q,p} := \cO(q)_{\PP(E)}\otimes \cO(p)_C, \ p,q \in \Z,$$
where, as usual,  $\cO(1)_{\PP(E)}$ denotes the anti-tautological line bundle of $E$ (defined on $\PP(E)$) and $\cO(1)_C$ stands for  (the pull back to $M$ of)  {\it any} degree $1$ holomorphic line bundle over $C$, see for instance \cite[Section 3]{Miyaoka} for details. If $\Omega = 2\pi c_1(\cL_{q,p})$ is  a K\"ahler class, evaluation over the fibre of $\PP(E)$ shows that $q>0$, thus any polarization on $M=\PP(E)$ can be written  as $\cL=\cL_{q, p}$ with $q>0$ (notice  that $\cL_{q,p}$ becomes positive when $p/q \gg 0$).  Clearly, both properties of existence of extremal K\"ahler metric and relative K-polystability of the polarization $\cL$ on $M$ are invariant under taking tensor powers $\cL^{\otimes k}=\cL^{k}$. As it will turn out in our specific situation, the same phenomena happens under changing the polarization $\cO(1)_C$ of the base curve $C$.  It will be useful to normalize the choice of  such polarizations, by introducing the following
\begin{notation} \label{n:polarization} We let $\cL_m := \cO(1)_{\PP(E)}\otimes \cO(m)_C, m \in \Q$ denote the class of holomorphic line bundles $\cL_{q,p}$ over $M$ such that $q>0$ and $p/q=m$.   
\end{notation}
In all of the arguments below  involving $\cL_m$  one can take some (and hence any) line bundle $\cL_{q,p}$ as above.

\smallskip We denote by ${\rm Aut}^{red}(M)$ the reduced automorphism group of $M= \PP(E)\to C$ (see e.g. \cite{gauduchon-book}) whose Lie algebra $\mathfrak{h}^{red}(M)$ consists of all holomorphic vector fields with zero on $M$, and let ${\rm Aut}^{red}_C(M)$ be the subgroup of ${\rm Aut}^{red}(M)$ of elements  which preserve $C$ (i.e. act on each fibre),  with Lie algebra $\mathfrak{h}^{red}_C(M)$. As $M$ is a locally trivial  holomorphic $\C \PP^{n-1}$-fibration over $C$, we have an exact sequence of Lie algebras
\begin{equation}\label{exact-sequence}
0 \to \mathfrak{h}^{red}_C(M) \to \mathfrak{h}^{red}(M) \to \mathfrak{h}^{red}(C) \to 0,
\end{equation}
where $\mathfrak{h}^{red}(C)$ its the Lie algebra of holomorphic vector fields with zeroes on $C$. Under the assumption ${\bf g}(C) \ge 1$, we have $\mathfrak{h}^{red}(C)=0$, so that $\mathfrak{h}^{red}(M)=\mathfrak{h}_C^{red}(M)$. We let $(\ell-1)$ with $\ell \ge 1$ denote the rank of ${\rm Aut}^{red}(M)$ (which is also the rank of ${\rm Aut}^{red}_C(M)$ by the preceding).  Thus, $\ell$ equals the number of  summands  in the decomposition 
\begin{equation}\label{decomposition}
E= \bigoplus_{k=0}^{\ell-1} U_k.
\end{equation}
of $E$ as direct sum of indecomposable holomorphic sub-bundles $U_k$.  We want to show that, in general, each $U_k$ is stable when $M$ is relative K-polystable with respect to the polarization  ${\cL}_{m}$ of $M$. Without loss,  we deal with $U_{0}$ and  assume $\rk(U_0)>1$.

\subsection{Constructing a test configuration} This construction follows \cite[Remark 5.14]{RT} and \cite[Section 3]{rollin}. For each strict sub-bundle $L\subset U_{0} \subset E$,  we consider the exact sequences of holomorphic vector bundles
\begin{equation}\label{extension}
\begin{split}
0 &\to L \to U_0 \to F_0 \to 0, \\
0 &\to L \to E \ \to F   \ \to 0,
\end{split}
\end{equation}
where $F_0 = U_0/L$ and $F= E/L \cong F_0 \oplus \big(\bigoplus_{k=1}^{\ell-1}U_k\big)$. Thus, $E$ is given by an element $e\in {\rm ext}^1(L,F)$, coming from an element  (still denoted by $e$) of ${\rm ext}^1(L,F_0)$; as  $U_0$ is indecomposable,  $e\neq 0$, and one can consider the smooth family
$\mathcal M := (M_t, t), t \in \C,$ where $M_t:=\PP(E_t)$ and $E_t$ is the extension of $(L,F)$ corresponding to $t e \in {\rm ext}^1(L,F_0)=H^1(M, L\otimes F_0^*)$ for $t\in \C$. As  explained in \cite[Section 3.1]{rollin}, $\mathcal  M = \PP(\mathcal{E}) \to C \times \C$ is itself a complex ruled manifold, where  $\mathcal{E}$ is a holomorphic vector bundle  whose restriction to $C\times \{t\}$ is $E_t$. We denote by $\pi_{\C} : \mathcal M \to \C$ the natural holomorphic projection on the  $\C$-factor.  As $E_t \cong E$ for $t\neq 0$, we have that   $\pi^{-1}(t)= M_t \cong M$, whereas  
$$\pi_{\C}^{-1}(0)=M_0= \PP\Big(\bigoplus_{k=0}^{\ell} V_k \Big) \to C,$$
where  we have set $V_0:=U_0/L,\,\, V_1:=L, \,\,  V_k := U_{k-1}, \, k=2, \ldots, \ell.$  As shown in \cite[Lemma~3.1.1]{rollin},  there is a natural  $\C^{\times}$ action $\rho_L$ on $\mathcal M$,  making $\pi_{\C}$ equivariant with respect to the standard action on $\C^{\times}$,  and which induces a ${\mathbb C}^{\times}$ action (still denoted by $\rho_L$) on the central fibre $M_0$,   given by  the fibre-wise multiplication with $\lambda \in \C^{\times}$ on the factor $V_1=L$  in the decomposition 
$$V := \bigoplus_{k=0}^{\ell} V_k.$$
Given a polarization $\cL_{m}=\cO(1)_{\PP(E)}\otimes\cO(m)_C$ on $M$ (we can work with any line bundle representing $\cL_m$, see Notation~\ref{n:polarization}), consider on  $\mathcal M$ the  (class of rational) holomorphic line bundles $\mathcal P_{m} := \cO(1)_{\PP(\cE)} \otimes \cO(r)_C$  which restricts to $\cL_{m}^t=\cO(1)_{\PP(E_t)} \otimes \cO(m)_C$ on each  fibre $M_t =\pi^{-1}_{\C}(t)$.  The $\C^{\times}$ action $\rho_L$ on $\mathcal M= \PP(\mathcal E)$ comes from an action preserving the vector bundle $\cE \to C\times \C$ (and acting trivially on $C$), so $\rho_L$ naturally lifts to an action on $\mathcal{P}_{m} \to \mathcal M \to \C$. It thus follows that for any $t\neq 0$, $(M_t, \cL^t_{m})$ is  a polarized variety isomorphic to $(M, \cL_{m})$. Furthermore, the holomorphic line bundle $\cL^0_{m}$ induced on the central fibre $M_0$ must be at least semi-ample.  As  the condition for $\cL_{m}$ to be ample on $M$ is relatively open  with respect to  $m \in \Q$, it follows that $\cL^0_{m}$ must be ample too. We thus conclude that  $(\mathcal M, \rho_L, {\mathcal P}_{m})$ defines  a test-configuration for $(M, \cL_{m})$  (the flatness of the morphism $\pi_{\C} : (\mathcal M, \mathcal P_{m}) \to \C$ is a direct consequence of the surjectivity of $\pi_{\C}$ and the fact that the central fibre is smooth).   We finally  notice that the rank of the reduced automorphism group of the central fibre $M_0$ is at least  $\ell$,   whereas the rank of the same group on $M_t$   is $(\ell-1)$ for $t \neq 0$, showing that the test configuration $\mathcal M$ is normal and not a product configuration \cite{Li-Xu, stoppa-corrigendum}.  We thus have established the following
\begin{lemma}\label{l:test-configuration} Given a completely decomposable vector bundle $E=\bigoplus_{k=0}^{\ell-1} U_k \to C$, a polarization $\cL_{m}= \cO(1)_{\PP(E)} \otimes \cO(m)_C$ on $M= \PP(E)$  and  a sub-bundle  $L \subset U_0$,   the data $$(\pi_{\C}: \mathcal M \to \C,  \rho_L, \mathcal P_{m})$$  define  a normal test configuration for $(M=\PP(E), \cL_{m})$ which is not a product configuration and with central fibre $(M_0=\PP(V), \cL^0_{m}),$ where
$$V = \bigoplus_{k=0}^{\ell} V_k , \,   \ V_0 : =U_0/L, \ , V_1: =L,  \, V_k := U_{k-1}, \, k=2, \ldots, \ell.$$
The induced $\C^{\times}$ action $\rho_L$ on $M_0$ is given by 
$$\rho_L(\lambda) \cdot (x, [e_0, e_1, \cdots, e_{\ell}]) = (x, [e_0, \lambda e_1, \ldots, e_{\ell}]), $$
where $x$ is a point on $C$ and $(v_0, v_1, \ldots, v_{\ell})$ is a vector in the fibre $V_x$  of $V$.
\end{lemma}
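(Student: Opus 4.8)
The plan is to verify the conditions of Definition~\ref{d:test-configuration} for the data assembled in the preceding construction, and then to exhibit the jump in the rank of the reduced automorphism group that rules out a product configuration. By \cite[Section~3.1]{rollin}, $\mathcal{E}$ is a holomorphic vector bundle on $C \times \C$ with $\mathcal{E}|_{C \times \{t\}} \cong E_t$, so $\mathcal{M} = \PP(\mathcal{E})$ is a locally trivial $\C\PP^{n-1}$-bundle over $C \times \C$; in particular $\mathcal{M}$ is smooth, hence normal, and $\pi_\C \colon \mathcal{M} \to \C$ is surjective with smooth equidimensional fibres, hence flat. For $t \neq 0$ the class $te$ is a non-zero multiple of $e$, so $E_t \cong E$ and $M_t \cong M$; for $t = 0$ the extensions \eqref{extension} split, $E_0 \cong (U_0/L) \oplus L \oplus \bigl(\bigoplus_{k=1}^{\ell-1} U_k\bigr)$, and after the relabelling $V_0 = U_0/L$, $V_1 = L$, $V_k = U_{k-1}$ for $k \geq 2$ we obtain $M_0 = \PP(V)$ with $V = \bigoplus_{k=0}^{\ell} V_k$, as in the statement.

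Next I would set up the $\C^\times$-action and the polarization. By \cite[Lemma~3.1.1]{rollin} there is a $\C^\times$-action $\rho_L$ on $\mathcal{E}$ covering the trivial action on $C$ and the standard scaling on the $\C$-factor; it descends to $\mathcal{M} = \PP(\mathcal{E})$, makes $\pi_\C$ equivariant, and on $M_0 = \PP(V)$ it is the fibrewise scaling of the summand $V_1 = L$ recorded in the statement. Since $\rho_L$ preserves $\mathcal{E}$, it lifts canonically to $\cO(1)_{\PP(\mathcal{E})}$, hence to $\mathcal{P}_m = \cO(1)_{\PP(\mathcal{E})} \otimes \cO(r)_C$; because $\cO(r)_C$ is pulled back from $C$, one has $\mathcal{P}_m|_{M_t} \cong \cL_m^{\,r}$ under the isomorphism $M_t \cong M$ for $t \neq 0$, so $r$ is the exponent. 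It then remains to check that $\mathcal{P}_m$ is relatively ample over $\C$: away from $0$ this is the ampleness of the polarization $\cL_m$ on $M$, while on the central fibre $\cL_m^0 = \mathcal{P}_m|_{M_0}$ is a specialization of the ample classes $\cL_m^{\,r}$, hence nef, and --- using that ampleness of $\cO(1)_{\PP(\,\cdot\,)} \otimes \cO(m)_C$ on a projective bundle over $C$ is an open condition in $m$ --- in fact ample; ampleness then propagates to a neighbourhood of $M_0$ by openness of ampleness in flat families, giving relative ampleness over all of $\C$. This shows $(\mathcal{M}, \rho_L, \mathcal{P}_m)$ is a test configuration for $(M, \cL_m)$.

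Finally, non-triviality. Under the standing assumption ${\bf g}(C) \geq 1$, the exact sequence \eqref{exact-sequence} gives $\mathfrak{h}^{red}(M_t) = \mathfrak{h}^{red}_C(M_t)$, whose rank for $t \neq 0$ equals one less than the number of indecomposable summands of $E_t$, i.e. $\ell - 1$; on the other hand $M_0 = \PP(V)$ is associated to a direct sum of $\ell + 1$ summands, so the rank of its reduced automorphism group is at least $\ell$. Since this rank jumps at $t = 0$, the normal test configuration $\mathcal{M}$ cannot be a product configuration (cf.\ \cite{Li-Xu, stoppa-corrigendum}). The step I expect to be the main obstacle is precisely the ampleness of the central-fibre polarization $\cL_m^0$: one must upgrade the automatic nef-ness coming from the degeneration to honest ampleness, which is where a comparison of the polarizations on $\PP(E)$ and $\PP(V)$ --- equivalently, the openness of the ample locus in the parameter $m$ --- is needed.
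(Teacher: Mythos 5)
Your proposal is correct and follows essentially the same route as the paper: the same appeal to \cite[Section~3.1]{rollin} for the structure of $\mathcal M=\PP(\mathcal E)$ and to \cite[Lemma~3.1.1]{rollin} for the action $\rho_L$, the same flatness argument (surjectivity plus smooth equidimensional fibres), the same upgrade of the central-fibre polarization from semi-ample/nef to ample via openness of ampleness in the parameter $m$, and the same rank-jump of the reduced automorphism group (from $\ell-1$ to at least $\ell$) to exclude a product configuration. The only differences are cosmetic elaborations (e.g.\ spelling out that $te$ is a nonzero multiple of $e$ for $t\neq 0$, and the propagation of ampleness to a neighbourhood of $M_0$), which the paper leaves implicit.
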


\subsection{Relative Donaldson--Futaki invariant}\label{secRFI}
The central fibre $M_0$ is a smooth complex variety, endowed with a holomorphic action of  the torus $\T^{\ell}$, coming from the diagonal action of $\T^{\ell+1}$ on $V= \bigoplus_{k=0}^{\ell} V_k$.  We choose any K\"ahler metric $g$ on $M_0$ in the K\"ahler class $\Omega=2\pi c_1(\cL_m^0)$, which is invariant under the action of $\T^{\ell}$.  The action of the sub-torus $\T^{\ell-1} \subset \T^{\ell}$ by diagonal multiplications on the factors $V_2, \ldots, V_{\ell}$ extends to each  fibre $M_t, t \neq 0$, and on $\mathcal M$.  As $\T^{\ell-1}$ is a maximal torus in the connected component of identity of ${\rm Aut}^{red}(M_t)$ for any $t\neq 0$, it follows that the extremal vector field $K_{\rm ex}$ of $(M_t, \Omega)$ belongs to ${\rm Lie}(\T^{\ell-1}) \subset {\rm Lie}(\T^{\ell})$ and is independent of $t$ (as $M_t \cong M_{t'}$ via $\rho_L$ and the action of $\T^{\ell-1}$ on $\mathcal M$ commutes  with $\rho_L$). We shall denote this vector field by $K_{\rm ex}^M$ and let $f^M_{{\rm ex}}$ be the Killing potential of $K^M_{\rm ex}$ of zero mean value with respect to $g$. As the central fibre $M_0$ is a smooth variety, the relative Donaldson--Futaki invariant  is  computed up to a positive normalization factor  by the differential-geometric quantity (see \cite{gabor} or Section~\ref{Preliminaries})
\begin{equation}\label{relative-futaki-2}
\begin{split}
\mathfrak{F}_{\rho_{\rm ex}}(\rho_{L}) &= \mathfrak{F}_{\Omega}(K_{L}) - \frac{\int_{M_0} (f_{L} f^M_{{\rm ex}})  v_g}{\int_{M_0} (f_{{\rm ex}}^M)^2 v_g} \mathfrak{F}_{\Omega}(K^M_{\rm ex})\\
                                                   &= \int_{M_0} {\rm Scal}_g f_{L} v_g -  \frac{\int_{M_0} (f_{L} f^M_{{\rm ex}})  v_g}{\int_{M_0}(f_{{\rm ex}}^M)^2 v_g} \int_{M_0} {\rm Scal}_g f^M_{{\rm ex}} v_g,
\end{split}
\end{equation}
where $K_{L}$ denotes the generator for the induced $S^1$ action by $\rho_{L}$ (again a subgroup of $\T^{\ell}$),  $f_{L}$ is its Killing potential of zero mean value with respect to  $g$. Of course, the r.h.s. of \eqref{relative-futaki-2} is independent of the choice of $\T^{\ell}$-invariant K\"ahler metric $g$ in $\Omega$.

As explained in the proof of Lemma 3 in  \cite{ACGT},   one can extend the $\T^{\ell}$ invariant K\"ahler metric $(g, \omega)$ on $M_0=(M, J_0)$ to a smooth family of $\T^{\ell-1}$ invariant K\"ahler metrics $(g_t, \omega_t)$ on $M_t=(M, J_t)$ (at least for $|t|<\varepsilon$) and then use the equivariant Moser lemma in order to find a $\T^{\ell-1}$ equivariant  family of diffeomorphisms $\Phi_t$ on $M$,  which send the complex structure $J_t$ of $M\cong M_t$ to a complex structure $\tilde J_t$ on $M$,  compatible  with the initial symplectic form $\omega$. As $\Phi_t$ commutes with the action of $\T^{\ell-1}$ and $K^M_{\rm ex} \in {\rm Lie}(\T^{\ell-1})$,  $\Phi_t$ preserves $K^M_{\rm ex}$.  In this symplectic setting, it is shown in \cite[Lemma~2]{ACGT} (see also \cite{lejmi}) that $f^M_{{\rm ex}}$ can be obtained as the $L^2$-projection of the scalar curvature of {\it any} $\T^{\ell-1}$ invariant K\"ahler metric compatible with $\omega$ to the finite dimensional space of normalized  hamiltonians  for  the $\T^{\ell-1}$ action on $(M, \omega)$. In particular, with respect to the initial metric  $g$, we have that $f^M_{\rm ex}$ coincides with  the $L^2$-projection ${\rm Scal}_g^{\T^{{\ell}-1}}$ of ${\rm Scal}_g$ to the space of normalized hamiltonians of $\T^{\ell-1} \subset \T^{\ell}$. In particular, we have  $$ \int_{M_0} {\rm Scal}_g f^M_{\rm ex} v_g =\int_{M_0}(f_{\rm ex}^M)^2 v_g,$$ so that \eqref{relative-futaki-2} becomes
\begin{equation}\label{relative-futaki-3}
\mathfrak{F}_{\rho_{\rm ex}}(\rho_{L})= \int_{M_0} {\rm Scal}_g f_{L} v_g - \int_{M_0} (f_{L} {\rm Scal}_g^{\T^{\ell -1}})  v_g.
\end{equation}
From this point of view, \eqref{relative-futaki-3} can be entirely computed from the symplectic structure $\omega$ on $M_0$, endowed with the hamiltonian action of  $\T^{\ell}$.  We thus have
\begin{lemma}\label{l:futaki-symplectic} The r.h.s of \eqref{relative-futaki-3} does not depend on the choice of an $\omega$ compatible, $\T^{\ell}$ invariant  K\"ahler metric  $(g, J_g)$ on $(M, \omega)$ nor on the choice  of a  $\T^{\ell}$ invariant K\"ahler metric $\tilde g$ on $(M, J_g)$ within the K\"ahler class $\Omega=[\omega]$.
\end{lemma}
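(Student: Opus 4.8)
The plan is to identify the right-hand side of \eqref{relative-futaki-3}, however it is computed, with the single number $\mathfrak{F}_{\rho_{\rm ex}}(\rho_L)$ --- the relative Donaldson--Futaki invariant of the polarized central fibre $(M_0,\cL^0_m)$ attached to the $\C^\times$-actions $\rho_L,\rho^M_{\rm ex}$ --- which by its very definition (Section~\ref{Preliminaries}) is a combination of algebraic Donaldson--Futaki invariants and inner products of $\C^\times$-actions, and hence involves no Riemannian datum. Two elementary remarks make the bridge. First, the Killing potential $f_L$ occurring in \eqref{relative-futaki-3} is the Hamiltonian of the \emph{fixed} vector field $K_L$, normalized to zero mean against the \emph{symplectic} form $\omega$, so it is determined by $\omega$ alone. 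Second, ${\rm Scal}_g^{\T^{\ell-1}}$ is by construction the $L^2$-orthogonal projection (against $v_g$) of ${\rm Scal}_g$ onto the finite-dimensional space $\mathcal{H}$ of $\omega$-normalized Killing potentials of $\T^{\ell-1}$, a space depending only on $\omega$ and the torus action.

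First I would redo, now for an \emph{arbitrary} $\omega$-compatible $\T^{\ell-1}$-invariant K\"ahler metric $g$, the reduction of \eqref{relative-futaki-2} to \eqref{relative-futaki-3} already sketched in the text: by \cite[Lemma~2]{ACGT}, ${\rm Scal}_g^{\T^{\ell-1}}=f^M_{\rm ex}$, the $\omega$-normalized Killing potential of the extremal vector field $K^M_{\rm ex}$ (itself independent of $g$, by Futaki--Mabuchi); since $f^M_{\rm ex}\in\mathcal{H}$ and ${\rm Scal}_g-{\rm Scal}_g^{\T^{\ell-1}}$ is $L^2$-orthogonal to $\mathcal{H}$, one gets $\int_M{\rm Scal}_g f^M_{\rm ex}\,v_g=\int_M(f^M_{\rm ex})^2\,v_g$, whence the correction term in \eqref{relative-futaki-2} collapses to $\int_M f_L\,{\rm Scal}_g^{\T^{\ell-1}}\,v_g$ and \eqref{relative-futaki-2} becomes exactly the right-hand side of \eqref{relative-futaki-3}. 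Thus, for every such $g$, the right-hand side of \eqref{relative-futaki-3} equals the left-hand side of \eqref{relative-futaki-2}, that is $\mathfrak{F}_{\rho_{\rm ex}}(\rho_L)$.

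It remains to see that the two families of metrics in the statement all produce this same number. For the first family --- varying the $\omega$-compatible $\T^\ell$-invariant complex structure $J_g$ with $\omega$ held fixed --- I would write $\int_M{\rm Scal}_g f_L\,v_g=\langle{\rm Scal}_g,f_L\rangle_{L^2}=\langle{\rm Scal}_g^{\T^\ell},f_L\rangle_{L^2}$ (because $f_L$ is a $\T^\ell$-Killing potential) and apply the reasoning of \cite[Lemma~2]{ACGT} once more, now to the maximal torus $\T^\ell$ of ${\rm Aut}^{red}(M_0)$, to identify ${\rm Scal}_g^{\T^\ell}$ with the $\omega$-normalized potential of the extremal vector field of $(M_0,\Omega,\T^\ell)$; both terms of \eqref{relative-futaki-3} are then manifestly functions of $(\omega,\T^\ell)$ alone. (Equivalently, $\int_M{\rm Scal}_g f_L\,v_g=\mathfrak{F}_\Omega(K_L)$ is the ordinary Futaki invariant of a torus action, whose value depends only on $\Omega$ and on the combinatorics of the $\T^\ell$-action near the zero set of $K_L$, not on the particular invariant complex structure.) For the second family --- varying a $\T^\ell$-invariant K\"ahler metric $\tilde g$, with K\"ahler form $\tilde\omega$, on the fixed $(M,J_g)$ within $\Omega=[\omega]$ --- \eqref{relative-futaki-3} evaluated with $\tilde g$ equals \eqref{relative-futaki-2} evaluated with $\tilde g$ (the second paragraph, applied with $\tilde\omega$ in place of $\omega$), which equals \eqref{relative-futaki-2} evaluated with $g$ by the standard invariance of the (relative) Donaldson--Futaki invariant under change of K\"ahler metric in a fixed class on the fixed complex manifold $(M,J_g)$; and the latter is again $\mathfrak{F}_{\rho_{\rm ex}}(\rho_L)$.

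The only substantive ingredient is \cite[Lemma~2]{ACGT}: it is what promotes the metric-dependent scalar curvature appearing in \eqref{relative-futaki-3} to a symplectic quantity, by identifying its $L^2$-projections onto Killing potentials with potentials of the extremal vector field. Accordingly, the one delicate point I anticipate is the correct invocation of that lemma --- applying it with the symplectic form belonging to the metric actually in use, for both tori $\T^{\ell-1}$ and $\T^{\ell}$, which tacitly rests on the connectedness (supplied by the construction in the proof of \cite[Lemma~3]{ACGT}) of the relevant space of $\omega$-compatible torus-invariant K\"ahler structures; once this is in hand, everything else is bookkeeping with orthogonal projections together with the definition of the relative Donaldson--Futaki invariant recalled in Section~\ref{Preliminaries}.
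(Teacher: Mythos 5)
Your proposal is correct and follows essentially the same route as the paper: the paper's (rather terse) justification is precisely that, by \cite[Lemma~2]{ACGT}, the $L^2$-projections of ${\rm Scal}_g$ onto the normalized Hamiltonians of $\T^{\ell-1}$ (and, for the first term, of $\T^{\ell}$) are symplectic invariants, so both integrals in \eqref{relative-futaki-3} are computable from $(\omega,\T^{\ell})$ alone, while invariance within the class $\Omega$ on a fixed $(M,J_g)$ is the standard class-invariance of the Futaki and Futaki--Mabuchi quantities. Your writeup merely spells out these two steps more explicitly than the paper does.
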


 \subsection{Generalized Calabi Ansatz}\label{s:calabi-anzats}  When $C$  is of genus ${\bf g} \ge 2$, by using Lemma~\ref{l:futaki-symplectic} and the Narasimhan--Ramanan approximation theorem~\cite{NR},  we can compute \eqref{relative-futaki-3} with respect to an $\omega$ compatible,  $\T^{\ell}$ invariant complex structure $\tilde J$ on $M_0$, corresponding to taking {\it stable} holomorphic structures on each $V_k$, see \cite[Lemma~2]{ACGT}. Furthermore, in this case, we can use  the generalized Calabi Ansatz of \cite{ACGT} in order to choose a particularly simple metric $g_c$ in the class $\Omega=[\omega]$ on $(M, \tilde J)$, which will make the computation of  \eqref{relative-futaki-3} explicit. 
 
To simplify the notation, we shall assume throughout this section that $M_0=(M, J_0)$ is a ruled complex manifold 
 $$M_0 = \PP\Big(\bigoplus_{k=0}^{\ell} V_k \Big) \to C, $$
over a compact complex curve of genus ${\bf g} \ge 2$,  and $V_k$ are stable vector bundle over $C$. This is a special case of the semi-simple rigid  toric fibre-bundles considered in \cite{ACGT}, see Sect. 2.2 {\it loc cit}.  

We  introduce a family of K\"ahler metrics $(g_c, \omega_c)$ on $M_0$, parametrized by a real constant $c$,  as follows: As each $V_i$ is  a stable and therefore projectively-flat bundle over $C$,  it admits a projectively-flat hermitian metric $h_i$ whose Chern curvature is  $\mu(V_i){\rm Id}\otimes \omega_C$, where the topological constant
$$\mu(V_i):=  \frac{\deg(V_i)}{\rk(V_i)}= \int_C c_1(V_i)/{\rm rk}(V_i)$$
is the {\it slope} of $V_k$,  and $\omega_C$ is the K\"ahler form of the metric $g_C$ on $C$  of  constant scalar curvature $2(1-{\bf g})$.  We denote by $z_i$ one-half of the square norm function defined by $h_i$ on $V_i$. Thus, $z_i$ is the  fibre-wise momentum map for the standard $U(1)$ action on $V_i$  by scalar multiplication, with respect to the  imaginary part of the hermitian product  defined by $h_i$. We consider the fibre-wise K\"ahler quotient at  moment value $z_0+ \cdots z_{\ell}=1$ of $$V= \bigoplus_{k=0}^{\ell} V_k$$  with respect to the hermitian product  $h= h_0 \oplus \cdots \oplus h_{\ell}$ and the diagonal $U(1)$ action on $V$: this gives the Fubini--Study metric $g^{\rm FS}_{\PP(V)}$ of scalar curvature $2n(n-1)$  
on each fibre of $M_0=\PP(V)\to C$. We use the  Chern connection of $(V, h)$ (which induces a horizontal distribution on $TM_0$) in order to complete trivially $(g^{\rm FS}_{\PP(V)}, \omega^{\rm FS}_{\PP(V)})$ in the horizontal direction,   and thus define a K\"ahler metric on $M_0$ as follows:
\begin{equation}\label{gc-1}
\begin{split}
g_c &= \Big(c - \sum_{k=0}^{\ell} \mu(V_k)L_k(x)\Big)g_C  + g^{\rm FS}_{\PP(V)}, \\
\omega_c &=  \Big(c - \sum_{k=0}^{\ell} \mu(V_k)L_k(x)\Big)\omega_C + \omega^{\rm FS}_{\PP(V)}, 
\end{split}
\end{equation}
where:
\begin{enumerate}
\item[$\bullet$]   the function $L_j(x)$ is the restrictions of $z_j$ on the level set $z_0 + \cdots z_{\ell}=1$ and then quotient to $M$; letting $x_i:= z_i = L_i(x)$ for $i=1, \ldots \ell$, we then have $L_0(x)= 1- \sum_{j=1}^{\ell}x_j$. Thus, $(x_1, \ldots, x_{\ell})$ is the induced (fibre-wise) moment map for the $\T^{\ell}$ action on $(\PP(V), \omega_c)$,  taking values in the standard simplex $\Delta \subset \R^{\ell}$.
\item[$\bullet$] $c$ is a real constant satisfying
\begin{equation}\label{c-constraint}
c>{\rm max}\{\mu(V_i), i=0, \ldots, \ell\},
\end{equation}
or,  equivalently, $(c - \sum_{k=0}^{\ell} \mu(V_k)L_k(x))>0$ on $M_0$.
\item[$\bullet$] $(g_C, \omega_C)$  is the pull back of the K\"ahler structure on $C$ to $M_0$.
\end{enumerate}
It is not immediately clear from the above description that $\omega_c$ is a closed form, but for various  computational purposes it will be more convenient to describe $(g_c, \omega_c)$ in terms of its pull-back to the the blow-up ${\hat M_0}$ of $M_0$  along the  sub-manifolds $S_i=\PP(V_i) \subset \PP(V)$, which  is isomorphic to the total space of the $\C \PP^{\ell}$ fibre-bundle 
$${\hat M}_0 =  \PP\Big(\cO(-1)_{\PP(V_0)} \oplus  \cdots \oplus \cO(-1)_{\PP(V_{\ell})}\Big) \to \hat S,$$ 
where 
$$\hat S = \PP(V_0)\times _C \cdots \times _C \PP(V_{\ell}) \to C.$$
We can summarize the setting in the following commutative diagram
\begin{equation*}
\begin{CD}
\hat M_0= \PP\Big(\cO(-1)_{\PP(V_0)}\oplus  \cdots \oplus \cO(-1)_{\PP(V_{\ell})}\Big) @>>> {\hat S}=\PP(V_0)\times_C \cdots \times_C \PP(V_{\ell})\\
 @VVV  @VVV\\
M_0= \PP\Big(V_0\oplus \cdots \oplus V_{\ell}\Big) @>>> C
\end{CD}
\end{equation*}
Notice that $\hat S$  admits  a  family of  (locally symmetric) CSC K\"ahler metrics of the form 
$$g_{{\bf a}, b}^{\hat S} = \sum_{k=0}^{\ell}a_k g^{\rm FS}_{V_k} + b g_C, $$
where ${\bf a}=(a_0, \ldots, a_{\ell})$ is an $(\ell+1)$-tuple of positive real numbers, $b>0$ and $g^{\rm FS}_{V_i}$ denotes the Fubini--Study metric of scalar curvature $2{\rm rk}(V_i)({\rm rk}(V_i)-1)$ defined on the fibres of $\PP(V_k)$ by using the hermitian product  $h_k$,  and on $\hat S$ by using the projectively flat structure of $(V_k,h_k)$.

We  denote by ${\hat \theta}_i$ the (real-valued) connection 1-form on the unitary bundle $P_i\subset \cO(-1)_{\PP(V_i)}$ with respect to $h_i$, induced via  the Chern connection of $(\cO(-1)_{\PP(V_i)}, h_i)$. Using that the curvature of $(V_i,h_i)$ is $\mu(V_i){\rm Id}\otimes \omega_C$,  $\hat \theta_i$ satisfies
\begin{equation}\label{theta-i}
\hat \theta_i (K_i)=1, \ \ d{\hat \theta} _i  =  \omega_{V_i}^{\rm FS} -  \mu(V_i) \omega_C,
\end{equation}
where $K_i$ stands for the generator of the standard $S^1$ action on $\cO(-1)_{\PP(V_i)}$,   and $\omega_{V_i}^{\rm FS}$ and $\omega_C$ are  the $(1,1)$-forms associated to the tensors $g^{\rm FS}_{V_i}$ and $g_C$ on $\hat S$,   introduced above. 

Using arguments identical to \cite[Lemma~1]{ACGT-0} (see also \cite[Theorem. 2]{ACGT-1} and \cite[Section 2.3]{ACGT}), one can see that the pull-back of $(g_c, \omega_c)$ to  ${\hat M}_0 \to \hat S$ is given by
\begin{equation}\label{generalized-Calabi-all}
\begin{cases}
g_c = &\sum_{k=0}^{\ell}L_k(x) g_{V_k}^{\rm FS} + \Big(c - \sum_{k=0}^{\ell} \mu(V_k)L_k(x)\Big)g_C \\
        & + \sum_{i,j=1}^{\ell} \Big(({\rm Hess}(u))_{ij} dx_idx_j  + ({\rm Hess}(u))^{-1}_{ij}{\theta}_i {\theta}_j\Big),\\
 \omega_c = &  \sum_{k=0}^{\ell}L_k(x) \omega_{V_k}^{\rm FS}  +\Big(c - \sum_{k=0}^{\ell} \mu(V_k)L_k(x)\Big)\omega_C \\
        & + \sum_{j=1}^{\ell} dx_j \wedge {\theta}_j,      
\end{cases}
\end{equation}
where:
\begin{enumerate}
\item[$\bullet$] $L_j(x) = x_j, \ j=1, \ldots \ell; \ L_0(x) = 1- \sum_{j=1}^{\ell}x_j$ and $x=(x_1, \ldots x_k)$ belongs to the standard simplex $\Delta=\{x : L_k(x) \ge 0, k=0, \ldots \ell \} \subset {\mathbb R}^{\ell}$;
\item[$\bullet$] $\theta_j= {\hat \theta}_j - {\hat \theta}_0, j=1, \ldots, \ell$ are the components of a connection $1$-form  defined on a principle $\T^{\ell}$ bundle $P$ over $\hat S$,  such that
\begin{equation}\label{theta}
d\theta_j = \omega_{V_j}^{\rm FS} - \omega_{V_0}^{\rm FS} + (\mu(V_0)-\mu(V_j))\omega_C, j=1, \ldots, \ell;
\end{equation}
\item[$\bullet$]  $u= \frac{1}{2}\sum_{k=0}^{\ell} L_k(x) \log L_k(x)$
is the Guillemin potential for the Fubini--Study metric on the $\C \PP^{\ell}$-fibre of $\hat M_0 = \PP\Big(\bigoplus_{k=0}^{\ell} \cO(-1)_{\PP(V_k)}\Big) \to \hat S$.
\end{enumerate}
The metric \eqref{generalized-Calabi-all} is a special case of the  generalized Calabi construction developed in \cite{ACGT-1, ACGT}.  For the purpose of computing of Donaldson--Futaki invariant, we shall work with the form \eqref{generalized-Calabi-all} of the metric, and this can be merely taken to be its definition: even though \eqref{generalized-Calabi-all}-\eqref{theta} define a degenerate K\"ahler metric on $\hat M_0$,  it is shown in \cite[Prop. 2 and Theorem 2]{ACGT-1}  that it is the pull-back of a smooth K\"ahler metric on $M_0= \PP(V) \to C$, provided that condition \eqref{c-constraint} is satisfied.  The corresponding K\"ahler class $\Omega_c=[\omega_c]$ on $M$ is  called  {\it admissible}.  The definition \eqref{gc-1} yields  that $(g_c, \omega_c)$ restricts to each fibre of $\PP(V)$ to a Fubini--Study metric of  scalar curvature $2n(n-1)$, thus showing that  $\Omega_c= 2\pi \left(c_1(\cO(1)_{\PP(V)}) + m c_1(\cO(1)_C)\right)$ for a certain  real number $m$.  This can also be deduced directly  from \eqref{generalized-Calabi-all}, for instance by integrating $\omega_c^{n-1}$ over a fibre of the fibration (over $C$)  $\hat M_0 \to \hat S \to C$ (and using Lemma~\ref{techn} below).  We claim that $m=c$. To show this  we use \cite[Lemma 5.16]{RT} to compute (denoting $r_V=\rk(V)=n$)
\begin{equation*}
\frac{1}{(2\pi)^{n}} \int_{M_0}\frac{{\Omega_c}^{r_V}}{r_V!}=\frac{1}{r_V!}\Big(c_1(\cO(1)_{\PP(V)}) + m c_1(\cO(1)_C)\Big)^n= \frac{1}{r_V!}\big(r_V m -d_V\big),
\end{equation*}
on the one hand,  and  Proposition~\ref{p:computation} below to get
\begin{equation*}\label{alpha0eqn}
\frac{1}{(2\pi)^{n}} \int_{M_0}\frac{{\Omega_c}^{r_V}}{r_V!}  = \frac{\alpha_0}{\pi_R}= \frac{1}{r_V!}\big(r_Vc - d_V\big)
 \end{equation*}
 on the other hand.

Conversely, as $H^2(M_0, \R) \cong \R^2$, any K\"ahler class $\Omega$ on $M_0$ can be rescaled by a positive real number so as it becomes of the form $[\omega_c]$ for some real number $c$ (possibly not satisfying \eqref{c-constraint}). However,  integrating suitable powers of $\omega_c$ over the sub-manifolds $S_i=\PP(V_i) \subset M_0$ ($S_i$ is the pre-image of a vertex of $\Delta$) yields the inequality \eqref{c-constraint}. This shows that {\it any} K\"ahler class on $M_0$ is admissible up to a scale. We have thus established
\begin{lemma}\label{l:kahler-class} Let $M_0=\PP(V)\to C$ with $V=\bigoplus_{i=0}^{\ell} V_i,$  where $V_i$ are stable vector bundle over a curve $C$. Then,  \eqref{generalized-Calabi-all}-\eqref{theta}-\eqref{c-constraint} introduces  K\"ahler metrics  $(g_c, \omega_c)$ on $M_0,$ which exhaust the K\"ahler cone of $M_0$ up to positive scales. The constant $c$ corresponding to a  polarization $\cL_{m}^0=\cO(1)_{\PP(V)} \otimes \cO(m)_C$ on $M_0$ is $c=m$.
\end{lemma}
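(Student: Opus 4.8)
The plan is to make precise the discussion preceding the statement.

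\emph{Cohomological picture.} Since $M_0=\PP(V)\to C$ is a locally trivial $\PP^{n}$-bundle over a curve (with $n=\rk V=r_V$), we have $H^2(M_0,\R)\cong\R^2$, spanned by $c_1(\cO(1)_{\PP(V)})$ and the pull-back $c_1(\cO(1)_C)$. Hence every K\"ahler class on $M_0$ is, after multiplication by a suitable positive real number, of the form $\Omega=2\pi\bigl(c_1(\cO(1)_{\PP(V)})+m\,c_1(\cO(1)_C)\bigr)$ for a unique $m\in\R$, and the lemma reduces to the two statements: (i) whenever $m$ satisfies \eqref{c-constraint} (with $c$ replaced by $m$) the class $\Omega$ is represented by the form $\omega_c$ of \eqref{generalized-Calabi-all} with $c=m$; and (ii) if $m\le\max_k\mu(V_k)$ then $\Omega$ is not K\"ahler.

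\emph{The normalization $c=m$.} By \cite[Prop.~2 and Thm.~2]{ACGT-1}, when \eqref{c-constraint} holds the (a priori degenerate) expression \eqref{generalized-Calabi-all}--\eqref{theta} is the pull-back of a genuine $\T^{\ell}$-invariant K\"ahler metric on $M_0$; as $(g_c,\omega_c)$ restricts on each fibre of $\PP(V)\to C$ to a Fubini--Study form of scalar curvature $2n(n-1)$, its class has the shape $2\pi(c_1(\cO(1)_{\PP(V)})+m\,c_1(\cO(1)_C))$ for some $m=m(c)$, and it remains to check $m(c)=c$. For this I would compute $\tfrac{1}{(2\pi)^n}\int_{M_0}\Omega_c^n/n!$ in two ways. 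Algebraically, expanding $(c_1(\cO(1)_{\PP(V)})+m\,c_1(\cO(1)_C))^n$ by the Grothendieck relation for $V$ on $\PP(V)$ over a curve, i.e.\ \cite[Lemma~5.16]{RT}, gives $\tfrac1{n!}(n m-d_V)$ with $d_V=\deg V$. Differential-geometrically, integrating $\omega_c^n$ fibre-wise along $\hat M_0\to\hat S\to C$ from the explicit form \eqref{generalized-Calabi-all}, which is exactly the content of Proposition~\ref{p:computation} (together with the integration formula of Lemma~\ref{techn}), gives $\alpha_0/\pi_R=\tfrac1{n!}(n c-d_V)$. Comparing these two expressions yields $m(c)=c$, which proves (i) and the final assertion of the lemma.

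\emph{Necessity of \eqref{c-constraint} and conclusion.} Suppose $\Omega=2\pi(c_1(\cO(1)_{\PP(V)})+m\,c_1(\cO(1)_C))$ is K\"ahler. For each $i$ the submanifold $S_i=\PP(V_i)\subset\PP(V)$ (the fibre-wise moment-map pre-image of the $i$-th vertex of $\Delta$) inherits a K\"ahler class, and since $\cO(1)_{\PP(V)}|_{S_i}=\cO(1)_{\PP(V_i)}$ this class equals $2\pi(c_1(\cO(1)_{\PP(V_i)})+m\,c_1(\cO(1)_C))$ on the ruled variety $S_i=\PP(V_i)\to C$. Its top self-intersection number over $S_i$ (of complex dimension $\rk V_i$) is, by the same Grothendieck-relation computation applied to $V_i$, a positive multiple of $m-\mu(V_i)$, hence strictly positive; thus $m>\mu(V_i)$ for all $i$, i.e.\ \eqref{c-constraint} holds with $c=m$. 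Combined with the previous step this identifies the given $\Omega$ with $[\omega_c]$ for $c=m$, so the family $\{[\omega_c]:c>\max_k\mu(V_k)\}$ exhausts the K\"ahler cone of $M_0$ up to positive scale.

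The only point requiring genuine care is the bookkeeping in the second step: reconciling the purely intersection-theoretic value coming from \cite[Lemma~5.16]{RT} with the fibre-integration output $\alpha_0/\pi_R$ of Proposition~\ref{p:computation}. Both computations are elementary once the generalized Calabi form \eqref{generalized-Calabi-all} and the integration formula of Lemma~\ref{techn} are available, and it is precisely the agreement of their leading terms that forces $m=c$; everything else is the cohomological remark already recorded above.
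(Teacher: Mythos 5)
Your proposal is correct and follows essentially the same route as the paper: the cohomological reduction via $H^2(M_0,\R)\cong\R^2$, the identification $m=c$ by comparing the intersection-theoretic volume from \cite[Lemma~5.16]{RT} with the fibre-integration value $\alpha_0/\pi_R$ of Proposition~\ref{p:computation}, and the necessity of \eqref{c-constraint} by restricting the class to the submanifolds $S_i=\PP(V_i)$. The only blemish is the harmless slip that the fibre of $\PP(V)\to C$ is $\C\PP^{n-1}$, not $\C\PP^{n}$, for $n=r_V$.
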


\subsection{Computing the relative Donaldson--Futaki invariant via $g_c$}\label{computgc} 
We shall start this section by fixing some notation.
\begin{notation}\label{notation1}
We denote for all $i=0,..,\ell$
\begin{align*}
r_i=\rk(V_i),& & d_i=\deg(V_i),& & \mu_i=\mu(V_i),
\end{align*}
and 
\begin{align*}
r_V=r_0+...+r_\ell, & & d_V=\deg(V)=\sum_{i=0}^{\ell} d_i, & & \pi_R=&(r_0-1)!...(r_\ell-1)!.
\end{align*}
\end{notation}

The volume form $v_{g_c}= \omega_c^{n}/n!$ (with $n=r_V$ being the complex dimension of $M$) of the metric \eqref{generalized-Calabi-all} is given by
$$v_g = \frac{p_c(x)}{\pi_R} \Big[\omega_C\wedge \Big(\bigwedge_{i=0}^{\ell} (\omega_{V_i}^{\rm FS})^{(r_i-1)}\Big)\wedge \Big(d\mu \wedge \theta_1\wedge \cdots \wedge \theta_{\ell}\Big)\Big], $$
where $d\mu$ is the standard Lebesgue measure on $\R^{\ell}$ and we have set 
\begin{equation}\label{pc}
 p_c(x) = \Big(c-\sum_{k=0}^{\ell} \mu_k L_k(x)\Big)\prod_{k=0}^{\ell}(L_k(x))^{(r_k-1)},
 \end{equation}
The scalar curvature ${\rm Scal}_{g_c}$ of the metric \eqref{generalized-Calabi-all} is computed in \cite{ACGT} to be
\begin{equation}\label{Scal}
\begin{split}
{\rm Scal}_{g_c} &= \sum_{k=0}^{\ell} \frac{2r_k(r_k-1)}{L_k(x)} + \frac{4(1-{\bf g})}{\big(c - \sum_{k-0}^{\ell} \mu_k L_k(x)\big)} \\
                              & - \frac{1}{p_c(x)}\sum_{p,q=1}^{\ell} \frac{\partial^2}{\partial x_p \partial x_q}\big(p_c(z)u^{pq}(x)\big),
                   \end{split}
\end{equation}
 where $(u^{pq}(x))$ denotes $({\rm Hess}(u))^{-1}$.   We then compute (by using integration by parts, compare with  \cite[Section 2.5]{ACGT}): 
 \begin{equation}\label{alphas}
 \begin{cases}
 \alpha_0&:=  \frac{\pi_R}{(2\pi)^{n}}\int_{M_0} v_{g_c}= \int_{\Delta} p_c(x) d\mu,  \\
 \alpha_r&: = \frac{\pi_R}{(2\pi)^{n}}\int_{M_0} x_r v_{g_c}=\int_{\Delta} x_r p_c(x) d\mu, \\
\alpha_{rs}&: = \frac{\pi_R}{(2\pi)^{n}}\int_{M_0} x_r x_s v_{g_c} =\int_{\Delta} x_rx_s p_c(x)d\mu, \\
\beta_0  &: = \frac{\pi_R}{2(2\pi)^{n}}\int_{M_0}{\rm Scal}_{g_c} v_{g_c}\\
                & = \int_{\Delta}\Big(\frac{2(1-{\bf g})}{c-\sum_{k=0}^{\ell}\mu(V_k)L_k(x)}  + \sum_{k=0,r_k\neq 1}^{\ell} \frac{{\rm rk}(V_k)({\rm rk}(V_k)-1)}{L_k(x)}\Big)p_c(x) d\mu \\
                 &\hfill + \int_{\partial \Delta} p_c(x) d\sigma, \\
\beta_r  &:=  \frac{\pi_R}{2(2n)^{n}}\int_{M_0}{\rm Scal}_{g_c} x_r v_{g_c}\\
                 &=\int_{\Delta}\Big(\frac{2(1-{\bf g})}{c-\sum_{k=0}^{\ell}\mu(V_k)L_k(x)}  + \sum_{k=0,r_k\neq 1}^{\ell} \frac{{\rm rk}(V_k)({\rm rk}(V_k)-1)}{L_k(x)}\Big)x_r p_c(x) d\mu \\
                 & \hfill+ \int_{\partial \Delta} x_r p_c(x) d\sigma,  
 \end{cases}
\end{equation}
where $d\sigma$ is the induced measure on the facets of $\Delta$ by $u_j \wedge d\sigma_i = - d\mu$ for each facet $F_j$ with $u_j=dL_j$ being the inward normal of $F_j$.

\smallskip
We obtain  that the normalized hamiltonians  and  ${\rm Scal}_{g_c}^{\T^{\ell-1}}= f^M_{\rm ex}$ are given respectively by 
$$f_k= x_k - \frac{\alpha_k}{\alpha_0}, \ \ f^M_{\rm ex}= a_0 + \sum_{j=2}^{\ell}a_j x_j$$ 
with 
\begin{equation}\label{syst}
\begin{cases}
 a_0 \alpha_k + \sum_{j=2}^{\ell} a_j \alpha_{jk} &= 2\beta_k, \ \  k=2, \ldots \ell, \\  
 a_0 \alpha_0 + \sum_{j=2}^{\ell} a_j \alpha_j  &= 2\beta_0.
 \end{cases}
 \end{equation}
This allows us to obtain from \eqref{relative-futaki-3}
\begin{equation*}
\begin{split}
\frac{\alpha_0\pi_R}{(2\pi)^n} \mathfrak{F}_{\rho_{\rm ex}}(\rho_{L}) &= 2\left(\alpha_0\beta_{1} - \alpha_{1}\beta_0 \right) - \sum_{j=2}^{\ell} a_j \alpha_0\Big(\alpha_{j1} - \frac{\alpha_{1}\alpha_j}{\alpha_0}\Big) \\
                                                                  &= 2\left(\alpha_0\beta_{1} - \alpha_{1}\beta_0 \right)  -2\sum_{j,r=2}^{\ell} (A^{-1})_{rj} \Big(\alpha_0\beta_r -\alpha_r\beta_0\Big)\Big(\alpha_{j1} - \frac{\alpha_{1} \alpha_j}{\alpha_0}\Big),
\end{split}
\end{equation*}
where we introduced the matrix of size $\ell-1$,
\begin{equation}\label{matrixA} A_{ij}= \alpha_{ij} -\frac{\alpha_i\alpha_j}{\alpha_0},  \ \ 2\le i,j\le \ell.
\end{equation}
We thus have, setting $\mathring{\mathfrak{F}}_{\rho_{\rm ex}}(\rho_{L})=\frac{1}{2}\frac{\alpha_0\pi_R}{(2\pi)^n} \mathfrak{F}_{\rho_{\rm ex}}(\rho_{L})$,
\begin{lemma}\label{l:futaki-computed} Let $M=\PP(E)\to C$ with $E=\bigoplus_{k=0}^{\ell-1}U_k$ be a projectivisation of vector bundle over a curve $C$ of genus ${\bf g}\ge 2$ and $L\subset  U_0$ a strict sub-bundle of one of the indecomposable components of $E$. The relative Donaldson--Futaki invariant $\mathfrak{F}_{\rho_{\rm ex}}(\rho_L)$ of the induced $\C^{\times}$ action $\rho_L$ on the central fibre $M_0=P(V)$ with respect to the test configuration of Lemma~\ref{l:test-configuration} and a polarization $\cL_{m}$  is positive multiple  of 
\begin{equation}\label{futaki-computed}
\mathring{\mathfrak{F}}_{\rho_{\rm ex}}(\rho_{L}) := (\alpha_0\beta_1-\alpha_1\beta_0)-\sum_{j,r=2}^{\ell} (A^{-1})_{rj} \Big(\alpha_0\beta_r -\alpha_r \beta_0\Big)\Big(\alpha_{j1} - \frac{\alpha_{1} \alpha_j}{\alpha_0}\Big),
\end{equation}
where $\alpha_i, \alpha_{ij}$ and $\beta_i$ are the integrals defined by \eqref{alphas} with $c=m$,  and $A$ is the matrix  \eqref{matrixA}.
\end{lemma}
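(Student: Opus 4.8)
The plan is to evaluate the differential-geometric expression \eqref{relative-futaki-3} on the explicit generalized Calabi metric of Section~\ref{s:calabi-anzats} and then to carry out a finite-dimensional linear-algebra computation. Since ${\bf g}\ge 2$, Lemma~\ref{l:futaki-symplectic} allows me to choose the $\T^\ell$-invariant complex structure on $M_0=\PP(V)$ freely within the symplectic class $\Omega=2\pi c_1(\cL^0_m)$; by the Narasimhan--Ramanan approximation theorem I take stable, hence projectively-flat, holomorphic structures on each $V_k$, and then by Lemma~\ref{l:kahler-class} I take the metric $(g_c,\omega_c)$ of \eqref{generalized-Calabi-all} with $c=m$. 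From this point on, every quantity is an integral over the standard simplex $\Delta$ against the weight $p_c(x)\,d\mu$ and the induced boundary measure $d\sigma$, as recorded in \eqref{alphas}.

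First I would identify the two Killing potentials entering \eqref{relative-futaki-3}. By Lemma~\ref{l:test-configuration} the action $\rho_L$ on $M_0$ is fibre-wise scalar multiplication on the summand $V_1=L$, whose fibre-wise momentum map is the Calabi variable $x_1$; after normalizing to zero mean value this gives $f_L=x_1-\alpha_1/\alpha_0$. The maximal torus $\T^{\ell-1}\subset\T^\ell$ acts diagonally on $V_2,\dots,V_\ell$, so the space of normalized hamiltonians for $\T^{\ell-1}$ is spanned by $1,x_2,\dots,x_\ell$, and therefore $f^M_{\rm ex}={\rm Scal}_{g_c}^{\T^{\ell-1}}=a_0+\sum_{j=2}^\ell a_jx_j$. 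The coefficients $a_0,a_2,\dots,a_\ell$ are fixed by requiring ${\rm Scal}_{g_c}-f^M_{\rm ex}$ to be $L^2$-orthogonal to $1$ and to each $x_k$; using the integration-by-parts identities for $\int_{M_0}{\rm Scal}_{g_c}v_{g_c}$ and $\int_{M_0}{\rm Scal}_{g_c}x_rv_{g_c}$ (which produce the boundary integrals appearing in \eqref{alphas}, compare \cite[Section 2.5]{ACGT}) these orthogonality relations are precisely the linear system \eqref{syst}.

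Next I would substitute $f_L=x_1-\alpha_1/\alpha_0$ into \eqref{relative-futaki-3}. The term $\int_{M_0}{\rm Scal}_{g_c}f_Lv_{g_c}$ becomes $\tfrac{2(2\pi)^n}{\pi_R\alpha_0}(\alpha_0\beta_1-\alpha_1\beta_0)$, while the term $\int_{M_0}f_L\,{\rm Scal}_{g_c}^{\T^{\ell-1}}v_{g_c}$, after using $\int_{M_0}f_Lv_{g_c}=0$, becomes $\tfrac{(2\pi)^n}{\pi_R}\sum_{j=2}^\ell a_j\big(\alpha_{j1}-\alpha_1\alpha_j/\alpha_0\big)$. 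To finish I solve \eqref{syst}: eliminating $a_0$ through the last equation turns the remaining $\ell-1$ equations into $\sum_{j=2}^\ell a_jA_{jk}=\tfrac{2}{\alpha_0}(\alpha_0\beta_k-\alpha_k\beta_0)$ with $A$ the matrix \eqref{matrixA}, so that $a_j=\tfrac{2}{\alpha_0}\sum_{r=2}^\ell (A^{-1})_{jr}(\alpha_0\beta_r-\alpha_r\beta_0)$, using that $A$ and $A^{-1}$ are symmetric. Plugging this back and factoring out the common positive constant $\tfrac{2(2\pi)^n}{\alpha_0\pi_R}$ yields $\mathfrak{F}_{\rho_{\rm ex}}(\rho_L)=\tfrac{2(2\pi)^n}{\alpha_0\pi_R}\,\mathring{\mathfrak{F}}_{\rho_{\rm ex}}(\rho_L)$ with $\mathring{\mathfrak{F}}_{\rho_{\rm ex}}(\rho_L)$ as in \eqref{futaki-computed}, which is the assertion.

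The one point that needs genuine care, rather than being routine computation, is the invertibility of the matrix $A$ of \eqref{matrixA}, needed both for \eqref{syst} to have a unique solution and for $\mathring{\mathfrak{F}}_{\rho_{\rm ex}}(\rho_L)$ to be well defined. Here I would invoke the identification at the end of Section~\ref{sectRK}: up to the positive factor $(2\pi)^n/\pi_R$, $A_{ij}$ is the Futaki--Mabuchi $L^2$-pairing of the normalized hamiltonians $x_i-\alpha_i/\alpha_0$, that is $A_{ij}=\int_\Delta\big(x_i-\tfrac{\alpha_i}{\alpha_0}\big)\big(x_j-\tfrac{\alpha_j}{\alpha_0}\big)p_c(x)\,d\mu$, the Gram matrix of the affinely independent coordinate functions $x_2,\dots,x_\ell$ against the weight $p_c$, which is strictly positive on $\Delta$ by \eqref{c-constraint}; hence $A$ is positive definite, in particular invertible. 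The remaining work is just the bookkeeping of the positive constants $\pi_R$, $(2\pi)^n$ and $\alpha_0$, which only affects the overall positive multiple asserted in the lemma.
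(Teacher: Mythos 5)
Your proposal is correct and follows essentially the same route as the paper: restrict to ${\bf g}\ge 2$ to invoke Lemma~\ref{l:futaki-symplectic} and Narasimhan--Ramanan, evaluate \eqref{relative-futaki-3} on the generalized Calabi metric $(g_c,\omega_c)$ with $c=m$, identify $f_L=x_1-\alpha_1/\alpha_0$ and $f^M_{\rm ex}=a_0+\sum_{j\ge 2}a_jx_j$ via the system \eqref{syst}, and solve for the $a_j$ in terms of $A^{-1}$ to arrive at \eqref{futaki-computed} up to the positive factor $2(2\pi)^n/(\alpha_0\pi_R)$. Your explicit justification that $A$ is positive definite as the Gram matrix of the normalized hamiltonians against the positive weight $p_c$ is a welcome detail the paper only records implicitly (in the $\ell=2$ case via Cauchy--Schwarz).
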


\smallskip In the remainder of this section, we collect the main technical ingredients allowing to evaluate the sign of the r.h.s. of \eqref{futaki-computed}

\begin{notation}
We denote
\begin{itemize}
 \item $\kappa=\#\{j,r_j=1\}$ the number of integers $0\leq j\leq \ell$ such that $r_j=1$,
\item $\kappa_k=\#\{j\neq k,r_j=1\}$ the number of integers $0\leq j\neq k\leq \ell$ such that $r_j=1$,
\item $\kappa_{k_1,k_2}=\#\{j\neq k_1,j\neq k_2,r_j=1\}$ the number of integers $0\leq j\neq (k_1,k_2)\leq \ell$ such that $r_j=1$.
\end{itemize}
\end{notation}

\begin{prop}\label{p:computation} With the notations above, $j\neq k$, $0<j,k<l$, we have 
\begin{align*}
\alpha_0=&\frac{\pi_R}{r_V!}(cr_V - d_V),\\
\alpha_j=&\frac{\pi_R}{(r_V+1)!}r_j\left(c(r_V+1) -d_V-\mu_j\right),\\
\alpha_{jk}=&\frac{\pi_R}{(r_V+2)!}r_jr_k(c(r_V+2)-d_V -\mu_j -\mu_k), \\
\alpha_{jj}=&\frac{\pi_R}{(r_V+2)!}r_j(r_j+1)(c(r_V+2)-d_V -2\mu_j),\\
\beta_0=&\frac{\pi_R}{(r_V-1)!}\left( (r_V-1)r_V c +2(1-{\bf g})  - (r_V-1)d_V\right),\\
\beta_j=&\frac{\pi_Rr_{j}}{r_V!} \left( r_V(r_V-1)c +2(1-{\bf g})- d_V(r_V-2)-r_V\mu_j \right).
\end{align*}
\end{prop}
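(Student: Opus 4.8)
The plan is to compute the six integrals in Proposition~\ref{p:computation} directly from their definitions in \eqref{alphas}, using the explicit form \eqref{pc} of the weight $p_c(x)$ on the simplex $\Delta$. First I would reduce everything to a single master integral: for a multi-index $(m_0,\dots,m_\ell)$ of nonnegative integers, the Dirichlet-type formula
\begin{equation*}
\int_{\Delta} \prod_{k=0}^{\ell} L_k(x)^{m_k}\, d\mu = \frac{\prod_{k=0}^{\ell} m_k!}{\big(\ell + \sum_{k=0}^{\ell} m_k\big)!}
\end{equation*}
(valid on the standard simplex with $L_0 = 1-\sum_{j\ge 1} x_j$) handles all the bulk terms, since $p_c(x) = (c - \sum_k \mu_k L_k(x))\prod_k L_k(x)^{r_k-1}$ and multiplying by $x_r = L_r(x)$ or $x_rx_s$ just shifts the exponents $r_k-1$ by one in the appropriate slots. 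Expanding $c - \sum_k \mu_k L_k(x) = \sum_k (c-\mu_k)L_k(x)$ on $\Delta$ (using $\sum_k L_k = 1$) turns $p_c$ into a sum of monomials, and then $\alpha_0, \alpha_j, \alpha_{jk}, \alpha_{jj}$ follow by bookkeeping. Collecting terms and using $\sum_k r_k = r_V$, $\sum_k d_k = d_V$, $d_k = r_k\mu_k$ should give exactly the stated closed forms; the factorials $r_V!$, $(r_V+1)!$, $(r_V+2)!$ arise because $\ell + \sum_k(r_k-1) = r_V - 1$, and each extra factor of $L_k$ raises the total degree by one. This part is routine but bookkeeping-heavy.

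For $\beta_0$ and $\beta_j$ I would use the expression \eqref{alphas}, which already splits the scalar-curvature integral into a bulk part and a boundary part $\int_{\partial\Delta} p_c\, d\sigma$ (respectively $\int_{\partial\Delta} x_r p_c\, d\sigma$). The bulk part: in $\int_\Delta \big(\tfrac{2(1-{\bf g})}{c-\sum_k\mu_k L_k} + \sum_{k: r_k\ne 1} \tfrac{r_k(r_k-1)}{L_k}\big) p_c(x)\, d\mu$, the first summand cancels the factor $(c-\sum_k\mu_k L_k)$ in $p_c$, leaving $2(1-{\bf g})\int_\Delta \prod_k L_k^{r_k-1}$; the $k$-th term of the second summand cancels one power of $L_k$, leaving $r_k(r_k-1)$ times $\int_\Delta (c-\sum\mu_j L_j)\prod_j L_j^{r_j-1-\delta_{jk}}$, again a sum of master integrals. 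The boundary part: $\partial\Delta$ consists of the facets $F_j = \{L_j = 0\}$, and on $F_j$ the weight $p_c$ restricts to the analogous weight for the smaller simplex (the factor $L_j^{r_j-1}$ forces $F_j$ to contribute zero unless $r_j = 1$, in which case $L_j^{r_j-1}=1$), so $\int_{\partial\Delta} p_c\, d\sigma$ is a sum over the indices $j$ with $r_j=1$ of an $(\ell-1)$-dimensional Dirichlet integral. Here the combinatorial quantities $\kappa, \kappa_k, \kappa_{k_1,k_2}$ will appear naturally as the count of such facets, but the miracle of the final answer is that the $\kappa$'s cancel out — the genus term $2(1-{\bf g})$ and the rank-dependent pieces reassemble into the clean formulas stated. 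I would verify this cancellation carefully, perhaps by checking a low-rank case (all $r_k = 1$, i.e. $E$ a sum of line bundles, where the formulas should reduce to known toric computations) as a consistency check.

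The main obstacle I expect is precisely this boundary-term cancellation: naively the formulas for $\beta_0, \beta_j$ ought to involve $\kappa$ and the $\kappa_k$, since both the ``$r_k\ne 1$'' restriction in the bulk sum and the facet count in the boundary integral single out whether each $r_k$ equals $1$. Showing that these contributions combine so that the $r_k(r_k-1)/L_k$ terms and the boundary terms conspire to produce a $\kappa$-free answer requires an identity among Dirichlet integrals of slightly different total degree — essentially a telescoping of $\int_\Delta (c - \sum\mu_j L_j)\prod L_j^{r_j - 1 - \delta_{jk}} d\mu$ against the facet integrals. I would organize this by writing the bulk integrand's $k$-th term, for $r_k = 1$, as $0$ (since $r_k(r_k-1) = 0$) so that such indices contribute only through the boundary, and for $r_k \ge 2$ as a genuine bulk contribution with no boundary counterpart (since $L_k^{r_k-1}$ vanishes on $F_k$); then the only facets that survive are those with $r_j = 1$, and a direct comparison of the two Dirichlet integrals over the facet $F_j$ versus the bulk term that ``would have'' corresponded to $r_j = 1$ shows they combine into a single term whose coefficient no longer references $\kappa$. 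Once that identity is in hand, assembling $\beta_0$ and $\beta_j$ is again just bookkeeping with $\sum r_k = r_V$ and $\sum d_k = d_V$.
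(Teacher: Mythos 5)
Your proposal is correct and follows essentially the same route as the paper: the ``master integral'' you state is exactly Lemma~\ref{techn}, the facet computation (with only the $r_j=1$ facets contributing, counted by $\kappa$, $\kappa_k$, $\kappa_{k_1,k_2}$) is Lemma~\ref{kappa}, and the paper's proof of Proposition~\ref{p:computation} performs precisely the cancellation you anticipate, extending the restricted sum $\sum_{k:\,r_k\neq 1}$ to a full sum and observing that the added terms equal the boundary contribution. The consistency check you suggest is sensible but not needed; the cancellation is an exact identity between the facet Dirichlet integral over $F_j$ and the ``phantom'' bulk term for $r_j=1$, as you describe.
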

\begin{proof}
This is a direct corollary of Lemmas \ref{techn} and \ref{kappa} that can be found in the Appendix (Section \ref{appendix}).  
Actually, we have
$$\alpha_0=\frac{\pi_R}{(r_V-1)!}\left(c - \frac{\mu_0r_0+...+\mu_\ell r_\ell}{r_V}\right).$$
For $j>0$, we get
$$\alpha_j=\frac{\pi_R}{(r_V-1)!}\frac{r_j}{r_V}\left(c - \frac{\mu_0r_0+..+\mu_j(r_j+1)+..+\mu_\ell r_\ell}{r_V+1}\right).$$
If $j\neq k$, then
\begin{align*}\alpha_{jk}=&\alpha_{kj}\\
             =&\frac{\pi_R}{(r_V-1)!}\frac{r_jr_k}{(r_V)(r_V+1)}\left(c - \frac{\mu_0r_0+..+\mu_j(r_j+1)+..+\mu_k(r_k+1)+..+\mu_\ell r_\ell}{r_V+2}\right) 
\end{align*}
If $j= k$,
$$\alpha_{jj}=\frac{\pi_R}{(r_V-1)!}\frac{r_j(r_j+1)}{(r_V)(r_V+1)}\left(c - \frac{\mu_0r_0+..+\mu_j(r_j+2)+..+\mu_\ell r_\ell}{r_V+2}\right).$$
Moreover,
\begin{align*}\beta_0=&\frac{\pi_R}{(r_V-1)!} \left(2(1-{\bf g}) +  (r_V-1) \sum_{k=0, r_k\neq 1}^{\ell} r_k \left(c - \frac{\mu_0r_0+..+\mu_k(r_k-1)+..+\mu_\ell r_\ell}{r_V-1}\right)\right)\\
&+\frac{\pi_R}{(r_V-1)!}\left((r_V-1)\kappa c-\sum_{k=0}^\ell d_k\kappa_k\right)\\
  =&\frac{\pi_R}{(r_V-1)!} \left(2(1-{\bf g}) +  (r_V-1) \sum_{k=0}^{\ell} r_k \left(c - \frac{\mu_0r_0+..+\mu_k(r_k-1)+..+\mu_\ell r_\ell}{r_V-1}\right)\right)\\
  &-\frac{\pi_R}{(r_V-1)!}\left((r_V-1)c \kappa - \sum_{k=0,r_k=1}^\ell (d_0+...+d_{k-1}+d_{k+1}...+d_{\ell})\right)\\
  &+\frac{\pi_R}{(r_V-1)!}\left((r_V-1)\kappa c-\sum_{k=0}^\ell d_k\kappa_k\right)\\ 
=&\frac{\pi_R}{(r_V-1)!}\left(2(1-{\bf g}) +  (r_V-1)r_V c - r_Vd_V+ \sum_{k=0}^{\ell} r_k \mu_k \right).
\end{align*}
Similarly, for $j>0$,
{
\begin{align*}\beta_j
=&\frac{\pi_Rr_{j}}{r_V!} \left(2(1-{\bf g}) + r_V(\sum_{k\neq j}^{\ell} r_k)c - \sum_{k\neq j}^{\ell} r_k(d_V-\mu_k+\mu_j)\right)+
\frac{\pi_R}{r_V!}(cr_V - d_V)r_j(r_j-1)\\
&-\frac{\pi_Rr_{j}}{r_V!}\left(r_V\kappa_jc -\sum_{k=0,k\neq j}^\ell (d_0+...+d_{k-1}+d_{k+1}+...+(d_j+\mu_j)+...+d_\ell)\right)\\
&+\frac{\pi_R}{r_V!}\left(r_jr_V\kappa_j c- r_j\sum_{k=0}^\ell d_k\kappa_{k,j}-\kappa_jd_j\right)\\
=&\frac{\pi_Rr_{j}}{r_V!} (2(1-{\bf g}) + r_V(r_V-r_j)c - (d_V+\mu_j)(r_V-r_j)+d_V-d_j +(r_j-1)(cr_V-d_V)).
\end{align*}
}
\end{proof}

We need to compute the term $\alpha_0\alpha_{jk}-\alpha_j\alpha_k$ explicitly in order to get $\mathfrak{F}_{\rho_{\rm ex}}(\rho_{L})$. By direct computation from the previous proposition, we obtain

\begin{lemma} \label{gamma}
Define
\begin{alignat*}{3}
 \gamma_{jk}&=&\frac{\pi_R^2r_{j}r_k}{(r_V+1)!^2(r_V+2)}[-(r_V+1)(r_V+2)c^2+2(\mu_k+\mu_j+d_V)(1+r_V)c\\
&&-(\mu_k+\mu_j+d_V)d_V-(r_V+2)\mu_j\mu_k],&\\
\gamma'_{j}&=& \frac{\pi_R^2}{r_V! (r_V+2)!}r_j[r_V(r_V+2)c^2-2(d_V(r_V+1)+r_V\mu_j)c+d_V(2\mu_j+d_V)].&
\end{alignat*}
Then for $j\neq k$, 
 \begin{align*}
\alpha_0\alpha_{jk}-\alpha_j\alpha_k=\gamma_{jk}, \hspace{1cm} \alpha_0\alpha_{jj}-\alpha_j\alpha_j=\gamma_{jj}+\gamma'_j.
\end{align*}
\end{lemma}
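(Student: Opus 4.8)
The plan is to derive the claimed formulas for $\alpha_0\alpha_{jk}-\alpha_j\alpha_k$ by straightforward substitution of the closed-form expressions from Proposition~\ref{p:computation}, and then to package the polynomial in $c$ that results into the stated quantities $\gamma_{jk}$, $\gamma'_j$. The only inputs needed are the three identities
\[
\alpha_0=\frac{\pi_R}{r_V!}(cr_V-d_V),\quad
\alpha_j=\frac{\pi_R r_j}{(r_V+1)!}\bigl(c(r_V+1)-d_V-\mu_j\bigr),
\]
together with the two formulas for $\alpha_{jk}$ ($j\neq k$) and $\alpha_{jj}$. So the proof is really just bookkeeping of a product of two linear (in $c$) expressions against another such product.

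First I would treat the off-diagonal case $j\neq k$. Writing $\alpha_0\alpha_{jk}=\frac{\pi_R^2 r_jr_k}{r_V!\,(r_V+2)!}(cr_V-d_V)\bigl(c(r_V+2)-d_V-\mu_j-\mu_k\bigr)$ and $\alpha_j\alpha_k=\frac{\pi_R^2 r_jr_k}{(r_V+1)!^2}\bigl(c(r_V+1)-d_V-\mu_j\bigr)\bigl(c(r_V+1)-d_V-\mu_k\bigr)$, I put both over the common denominator $(r_V+1)!^2(r_V+2)=r_V!\,(r_V+1)(r_V+2)\cdot(r_V+1)!$ — equivalently $r_V!(r_V+2)!\cdot\frac{r_V+1}{r_V+1}$ — and subtract the numerators. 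The $c^2$ coefficient comes out to $r_V(r_V+2)-(r_V+1)^2=-1$ after clearing, matching the $-(r_V+1)(r_V+2)c^2$ term once the normalization is tracked; the linear-in-$c$ coefficient collects to $2(\mu_j+\mu_k+d_V)(r_V+1)$ (up to the overall factor); and the constant term is $-(\mu_j+\mu_k+d_V)d_V-(r_V+2)\mu_j\mu_k$. Collecting these against the prefactor $\frac{\pi_R^2 r_jr_k}{(r_V+1)!^2(r_V+2)}$ gives exactly $\gamma_{jk}$.

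Next, for the diagonal case I would repeat the computation with $\alpha_{jj}=\frac{\pi_R}{(r_V+2)!}r_j(r_j+1)\bigl(c(r_V+2)-d_V-2\mu_j\bigr)$ in place of $\alpha_{jk}$, and with $\alpha_j^2$ in place of $\alpha_j\alpha_k$. The natural strategy is to write $\alpha_0\alpha_{jj}-\alpha_j^2 = \bigl(\alpha_0\alpha_{jj}'-\alpha_j\alpha_j\bigr) + \alpha_0(\alpha_{jj}-\alpha_{jj}')$ where $\alpha_{jj}'$ denotes the "off-diagonal formula evaluated at $k=j$", i.e.\ $\frac{\pi_R}{(r_V+2)!}r_j^2\bigl(c(r_V+2)-d_V-2\mu_j\bigr)$. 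The first bracket is then $\gamma_{jj}$ by the $j\neq k$ computation (formally setting $k=j$), and the second contributes $\alpha_0\cdot\frac{\pi_R}{(r_V+2)!}r_j\bigl(c(r_V+2)-d_V-2\mu_j\bigr)=\frac{\pi_R^2 r_j}{r_V!(r_V+2)!}(cr_V-d_V)\bigl(c(r_V+2)-d_V-2\mu_j\bigr)$, whose expansion in $c$ is $r_V(r_V+2)c^2-\bigl(r_V(d_V+2\mu_j)+(r_V+2)d_V\bigr)c+d_V(d_V+2\mu_j)$; noting $r_V(d_V+2\mu_j)+(r_V+2)d_V=2d_V(r_V+1)+2r_V\mu_j$, this is precisely $\gamma'_j$. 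Hence $\alpha_0\alpha_{jj}-\alpha_j^2=\gamma_{jj}+\gamma'_j$.

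The only mildly delicate point — and the place where sign or factorial errors would creep in — is keeping the factorial normalizations consistent when putting the two terms of each difference over a common denominator; everything else is elementary polynomial algebra in the single variable $c$ with parameters $r_V,d_V,\mu_j,\mu_k$. Since the statement is presented as "direct computation from the previous proposition", I would present the proof as exactly that: substitute, clear denominators, expand in powers of $c$, and match coefficients with the definitions of $\gamma_{jk}$ and $\gamma'_j$, doing the diagonal case via the $\alpha_{jj}=\alpha_{jj}'+(\text{correction})$ splitting above so as to reuse the off-diagonal algebra.
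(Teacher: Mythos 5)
Your proposal is correct and matches the paper's approach, which is simply the direct substitution of the closed-form expressions from Proposition~\ref{p:computation} followed by expansion in powers of $c$; I have checked that the coefficients $-(r_V+1)(r_V+2)$, $2(\mu_j+\mu_k+d_V)(r_V+1)$, and $-(\mu_j+\mu_k+d_V)d_V-(r_V+2)\mu_j\mu_k$ all come out as you claim, as does the identification of $\alpha_0(\alpha_{jj}-\alpha'_{jj})$ with $\gamma'_j$. Your splitting of the diagonal case via the formal $k=j$ evaluation of the off-diagonal formula plus the $r_j(r_j+1)-r_j^2=r_j$ correction is a clean way to organize the bookkeeping and explains transparently why the answer takes the form $\gamma_{jj}+\gamma'_j$.
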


In a similar way, we obtain the following lemma.

\begin{lemma}\label{betakalpha0}
We have
 \begin{align*}\beta_k\alpha_0-\beta_0\alpha_k &= \frac{\pi_R^2}{r_V!(r_V+1)!} \left(2r_V(r_kd_V-r_Vd_k)c +2({\bf g}-1-d_V)(r_kd_V-r_Vd_k)\right),\\
\end{align*}
and in particular
\begin{align*}
 \sum_{k=2}^{\ell}\beta_k\alpha_0-\beta_0\alpha_k  =&2 c   \frac{\pi_R^2}{(r_V-1)!(r_V+1)!} \left( 
 r_V(d_0+d_1)-d_V(r_0+r_1) \right)  \\ 
 &+2\frac{\pi_R^2}{r_V!(r_V+1)!} ({\bf g}-1-d_V)\left( 
 r_V(d_0+d_1)-d_V(r_0+r_1) \right).
 \end{align*}
\end{lemma}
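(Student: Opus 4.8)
The plan is to derive Lemma~\ref{betakalpha0} as a direct corollary of Proposition~\ref{p:computation}, just as Lemma~\ref{gamma} was obtained. First I would write down from Proposition~\ref{p:computation} the explicit polynomial expressions in $c$ for $\alpha_0$, $\alpha_k$, $\beta_0$, and $\beta_k$:
\[
\alpha_0=\tfrac{\pi_R}{r_V!}(cr_V-d_V),\qquad \alpha_k=\tfrac{\pi_R}{(r_V+1)!}r_k\bigl(c(r_V+1)-d_V-\mu_k\bigr),
\]
\[
\beta_0=\tfrac{\pi_R}{(r_V-1)!}\bigl((r_V-1)r_Vc+2(1-{\bf g})-(r_V-1)d_V\bigr),\qquad \beta_k=\tfrac{\pi_R r_k}{r_V!}\bigl(r_V(r_V-1)c+2(1-{\bf g})-d_V(r_V-2)-r_V\mu_k\bigr).
\]
Then I would form $\beta_k\alpha_0-\beta_0\alpha_k$ by brute-force multiplication, collecting terms by powers of $c$. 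Note that $\alpha_0\beta_k$ and $\alpha_k\beta_0$ are each products of an expression linear in $c$ with an expression linear in $c$, so a priori the difference is quadratic in $c$; the key mechanism is that the $c^2$ coefficients cancel. Indeed the $c^2$ coefficient of $\beta_k\alpha_0$ is $\tfrac{\pi_R^2 r_k}{r_V!(r_V-1)!}r_V(r_V-1)r_V$ up to the obvious normalization, and the $c^2$ coefficient of $\beta_0\alpha_k$ is $\tfrac{\pi_R^2 r_k}{(r_V-1)!(r_V+1)!}(r_V-1)r_V(r_V+1)$, and these agree once the factorials are reconciled, so the difference is genuinely linear in $c$ — consistent with the stated formula. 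One then tracks the $c^1$ and $c^0$ coefficients; crucially the $\mu_k$-terms must reorganize into the combination $r_kd_V-r_Vd_k$ using $\mu_k=d_k/r_k$, i.e. $r_k\mu_k=d_k$.

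The most delicate bookkeeping is the constant ($c^0$) term, where the factors $2(1-{\bf g})$, $d_V$, and $\mu_k$ all interact; I expect the right way to organize it is to factor out $\tfrac{\pi_R^2}{r_V!(r_V+1)!}$ at the end and verify that everything collapses to $2r_V(r_kd_V-r_Vd_k)c+2({\bf g}-1-d_V)(r_kd_V-r_Vd_k)$, i.e. that the bracket factors as $(r_kd_V-r_Vd_k)$ times a polynomial in $c$ of degree one. A useful sanity check: when $k$ and $0$ have the same slope ($d_k/r_k=d_0/r_0$... actually more precisely when $r_kd_V=r_Vd_k$, i.e. $V_k$ has the same slope as $V$) the expression vanishes, which is the expected Futaki-type vanishing. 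For the ``in particular'' part, I would sum $\beta_k\alpha_0-\beta_0\alpha_k$ over $k=2,\dots,\ell$; since $V_k=U_{k-1}$ for $k\ge 2$ while $V_0=U_0/L$ and $V_1=L$, one has $\sum_{k=2}^{\ell}r_k=r_V-r_0-r_1$ and $\sum_{k=2}^{\ell}d_k=d_V-d_0-d_1$, so $\sum_{k=2}^{\ell}(r_kd_V-r_Vd_k)=(r_V-r_0-r_1)d_V-r_V(d_V-d_0-d_1)=r_V(d_0+d_1)-d_V(r_0+r_1)$. Substituting this into the linear formula and adjusting the common factorial prefactor (the first term picks up an extra factor $r_V$ in moving from $\tfrac{1}{r_V!}$ to $\tfrac{1}{(r_V-1)!}$) yields exactly the displayed two-line expression.

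I do not anticipate any conceptual obstacle here: the whole lemma is a mechanical polynomial identity once Proposition~\ref{p:computation} is in hand, and the main ``hard part'' is simply not making an arithmetic slip in the cancellation of the $c^2$ terms and in the regrouping of the $\mu_k$, $d_V$, and ${\bf g}$ contributions. A clean write-up would just state ``By direct computation from Proposition~\ref{p:computation}'' and display the final answer, exactly in the style already used for Lemma~\ref{gamma}; if one wants to be more explicit, one can record the intermediate identity $\beta_k\alpha_0-\beta_0\alpha_k=\tfrac{\pi_R^2 r_k}{r_V!(r_V+1)!}\bigl(2r_V^2(r_V-1)c-\cdots\bigr)$ before factoring, but this is optional.
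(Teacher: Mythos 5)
Your proposal is correct and follows exactly the paper's route: the paper derives Lemma~\ref{betakalpha0} ``in a similar way'' to Lemma~\ref{gamma}, i.e.\ by direct polynomial computation from Proposition~\ref{p:computation}, with the $c^2$ terms cancelling and the remainder factoring through $r_kd_V-r_Vd_k$, and the sum over $k\ge 2$ handled via $\sum_{k\ge2}r_k=r_V-r_0-r_1$, $\sum_{k\ge2}d_k=d_V-d_0-d_1$. I verified the expansion: the difference equals $\frac{\pi_R^2 r_k}{r_V!(r_V+1)!}\bigl(2cr_V(d_V-r_V\mu_k)+2({\bf g}-1-d_V)(d_V-r_V\mu_k)\bigr)$, which with $r_k\mu_k=d_k$ gives the stated formula.
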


\subsection{Algebraic computation of the relative Donaldson--Futaki invariant}\label{sectDF}  We consider in this section  $M=\PP(\bigoplus_{i=0}^{\ell}V_i)\to C$ with no assumptions for $V_i$ or $C$, and take a polarization $\cL=\cL_{q,p}= \cO(q)_{\PP(E)}\otimes \cO(p)_C$. Up to scale, these will only  depend on the ratio $p/q$, so write $\cL=\cL_m=\cO(1)_{\PP(V)}\otimes \cO(m)_C$ with $m\in \Q$. Using that the volume of the sub-variety $\PP(V_i) \subset \PP(V)$ with respect to $\cL$ must be  positive for $\cL$ to define a polarization, one gets (see e.g. \cite[Prop.~1]{fujiki}) that $m>\mu(V_i)$ for all $i=0, \ldots, \ell$, compare with \eqref{c-constraint}.

We denote by $\rho_i, i=1, \ldots, r$ the $\C^{\times}$ action on $M$ given by multiplication on $V_i$ (and acting trivially on the other summands of $V=\bigoplus_i V_i$).  We want to compute algebraically the relative Donaldson--Futaki invariants of $\rho_i$ on $(M, \cL)$.  This computation for the classical Donaldson--Futaki invariant is a standard procedure and can be done in different ways, see \cite[Section 5.4]{RT} and \cite{DV1, Fut, KR1,Sz1}. 

\smallskip
We first compute $d_k=\chi(\PP(V),\mathcal{L}^k)$ for $k\gg 0$ using Proposition \ref{technique}. Recall that $n=\dim(\PP(V))=r_V$ is the dimension of the ruled manifold. With our notations, we have the formula $\pi_*\mathcal{L}^k =S^{k}V^* \otimes \cO(mk)_C$. In the computations below, we will also use the fact that $\int_C c_1(C)=2(1-{\bf g})$ and ${\rm deg}_C \cO(1)_C=1$. Then, we have 
 \begin{align*}
 d_k=&\int_C ch(S^{k}V^*) ch(\mathcal{O}_C(mk) ) Todd(C),\\
 =&\binom{n-1+k}{k}\Big(k(m -\mu(V))+1-{\bf g}\Big),\\
 =& \tilde \alpha_0k^n + \tilde \beta_0 k^{n-1} + O(k^{n-2}),
\end{align*}
with
\begin{align*}
\tilde  \alpha_0=&\frac{1}{(n-1)!}\left(m -\mu(V)\right)=\frac{1}{r_V!}\left(r_Vm- d_V\right) =\frac{1}{\pi_R}\alpha_0(m),\\
\tilde \beta_0 =&\frac{1}{(n-1)!}\Big((1-{\bf g})+\frac{n(n-1)}{2}(m-\mu(V))\Big)\\
                        =& \frac{1}{2(r_V-1)!}\big((r_V-1)(r_V m -d_V) + 2(1 -{\bf g})\Big) =  \frac{1}{2\pi_R}\beta_0(m).
 \end{align*}
 where $\alpha_i(m), \beta_i(m)$ are given by Proposition~\ref{p:computation} with $c=m$.  
 
 \smallskip
For  a $\C^{\times}$ action  $\rho$ on $(M, \cL)$, there is an associated weight $w_k(\rho)$ given by the trace  $\tr(A_k)$ of the infinitesimal generator $A_k$ on $H^0(M,\mathcal{L}^k)$, see Definition~\ref{d:convention}. For $k\gg 0$,  we let
$$w_k(\rho_i) =  k^{n+1} \tilde \alpha_i  + k^{n}\tilde \beta_i + O(k^{n-1}).$$
In order to compute  $w_k(\rho_i)$, we apply the $S^1$-equivariant Riemann--Roch theorem with the  Cartan model of equivariant cohomology in order to compute the equivariant characteristic quantities.

For so, let $\T^{r+1}$ denotes the natural $(r+1)$-dimensional torus action by scalar multiplication on each factor $V_i$ and $\rho$ be the $\C^{\times}$ action associated to an $S^1$ subgroup of $\T^{r+1}$:
$$\rho(t)\cdot (v_0,\cdots,v_r)=(t^{\lambda_0}v_0,\cdots, t^{\lambda_{r}}v_{r})$$
for some integer coefficients $\lambda_i$. Let us fix  a $\T^{r+1}$-invariant hermitian metric $h= h_0\oplus \ldots \oplus h_r$  on $V=\bigoplus_{i=0}^r V_i$  (here $h_i$ is a fixed hermitian metric on $V_i$) with $\rho$ equivariant curvature $\frac{1}{2\pi}F_V - \Lambda$, where  $F_V=F_0\oplus \cdots \oplus F_r$ is the usual (non-equivariant) curvature of $V$ (with $F_i$ being the curvature of  $(V_i,h_i)$),  and $\Lambda=\Lambda_{\rho}$ is the endomorphism of $E$ given by $$\Lambda_{\rho}=\Lambda=\lambda_0 Id_0\oplus \ldots \oplus \lambda_r Id_r.$$  Notice that the sing $-$ in front of $\Lambda$ of the equivariant curvature corresponds to our convention in Definition~\ref{d:convention} for the infinitesimal generator of the action $\rho$ on $V$.  Thus,  $-\frac{1}{2\pi}F_V +\Lambda$ is a $\rho$-equivariant curvature for the dual action on $V^*$ (still denoted by $\rho$). Using the  identification $\pi_*\mathcal{L}^k =S^{k}V^* \otimes \cO(mk)_C$, we apply the $S^1$-equivariant Riemann--Roch theorem (see \cite{AB} and \cite{BV}) in order to compute $w_k(\rho)$, as is done in \cite{Do2}.  Since we are dealing with $S^1$-invariance over a base of dimension 1, it is only necessary to compute the $(2,2)$ part of the $\rho$-equivariant cohomology class  
 \begin{equation}\label{eq1}ch^{\rho}(S^{k}V^*)ch^{\rho}(\mathcal{O}_C(mk))Todd^{\rho}(C)\end{equation}
in order to get the weight $w_k(\rho)$ by integration.  We can apply Proposition~\ref{technique} together with the fact that 
$$Todd^{\rho}(C)=1+ \frac{1}{2}c_1(C) =[1 +(1-{\bf g}) \frac{\omega_C}{2\pi}],$$ 
where $\omega_C$ is any representative of $c_1(\cO(1)_C)$, in order to expand \eqref{eq1}. Then, using equivariant Chern--Weil theory, we can replace before integration the quantities $c^{\rho}_1(V^*)$, $c^{\rho}_2(V^*)$, $c^{\rho}_1(\cO(1)_C),$ using the following formulas:
 \begin{align*}
  c^{\rho}_1(V^*)=&  [-\frac{1}{2\pi}\tr(F_V)+\tr(\Lambda)], \\
  ch_2^{\rho}(V^*)=& [-\frac{1}{2\pi}\tr(F_V\Lambda)+ \frac{1}{2}\tr(\Lambda^2)],\\
  c^{\rho}_1(\cO(1)_C)=&[\frac{1}{2\pi}\omega_C].
 \end{align*}
We obtain, keeping only the terms that can be integrated along $C$, 
 \begin{align*}
  w_k(\rho)=&-\frac{1}{2\pi} \int_C \Big[\binom{n+k}{k-1} \tr(F_V\Lambda) + \binom{n-1+k}{k-2}\tr(\Lambda)\tr(F_V) \\
                     &\pushright{- \binom{n-1+k}{k-1}\tr(\Lambda)(1-{\bf g})\omega_C - \binom{n-1+k}{k-1} k m \tr(\Lambda)\omega_C)\Big]}\\
                   = &  - \binom{n+k}{k-1} \Big(\sum_{i=0}^r \lambda_i d_i\Big) -  \binom{n-1+k}{k-2}\big(\sum_{i=0}^r \lambda_i r_i\big)d_V \\
                    & \pushright{+ \binom{n-1+k}{k-1}\big(\sum_{i=0}^r \lambda_i r_i\big)\big((1-{\bf g}) + km\big)},
 \end{align*}
where $r_j={\rm rk}(V_j)$; $d_j = {\rm deg}_C (V_j)$.
Letting $\rho=\rho_i$ (i.e. $\lambda_j=\delta_{ij}$), we thus  get 
$ w_k(\rho_i)= \tilde \alpha_i k^{n+1}+\tilde \beta_i k^{n}+ O(k^{n-1})$ with
\begin{align*}
\tilde \alpha_i =&\frac{1}{(n+1)!} (-d_i -r_i d_V +(n+1)mr_i),\\
                          =& \frac{r_i}{(r_V + 1)!}\Big(m(r_V+1) -d_V - \mu_i\Big),\\
                          =& \frac{1}{\pi_R}\alpha_i(m);  \\               
\tilde \beta_i =&-\frac{1}{(n+1)!}\Big(d_j\frac{n(n+1)} {2} + r_jd_V\Big) +\frac{1}{n!}\Big(r_i(1-{\bf g}) + m \frac{n(n-1)}{2}\Big),\\
                       =& \frac{r_i}{2 n!}\Big( 2(1-{\bf g}) + r_V(r_V-1) -r_V\mu_i -(r_V-2)d_V\Big),\\
                       =&  \frac{1}{2\pi_R}\beta_i(m),
\end{align*}
with $\alpha_j(m), \beta_j(m)$ given by Proposition~\ref{p:computation} with $c=m$.

\smallskip
Similarly, letting $w_k(\rho_i, \rho_j)$ denote the trace $\tr(A_{k,i} A_{k,j})$ where  $A_{k,i}$ is the infinitesimal generator  of the actions of $\rho_i$  on $H^0(M,\mathcal{L}^k) \cong  H^0(C, S^{k}V^* \otimes \cO(mk)_C)$, there is an expansion 
$$ w_k(\rho_i, \rho_j) = k^{n+2} \tilde \alpha_{ij} +  O(k^{n+1})$$
which we are going to detail below. We do a similar computation as before but apply the Hirzebruch--Riemann--Roch $S^1\times S^1$-equivariant Theorem to take into account  the two actions $\rho,\rho'$ corresponding to the generators of the $S^1\times S^1$ action. This time, working on $C\times S^1\times S^1$, we need to compute the $(3,3)$ part of 
 \begin{equation}\label{eq1a}ch^{\rho,\rho'}(S^{k}V^*)ch^{\rho,\rho'}(\cO)(mk)_C)Todd^{\rho,\rho'}(C)\end{equation}
 and integrate. This involves to compute the terms $T_1,T_2$ where
 \begin{align*}
  T_1=&\int_{C\times S^1\times S^1}ch_3^{\rho,\rho'}(S^{k}V^*),\\
  T_2=&\int_{C\times S^1\times S^1} ch_2^{\rho,\rho'}(S^{k}V^*)\left(c_1(\cO(mk)_C )+\frac{c_1(C)}{2}\right).
 \end{align*}
Since the base manifold is a curve, we use now that
 \begin{align*}
ch_3^{\rho,\rho'}(V^*) &= -\frac{1}{2\pi}[\tr(\Lambda_\rho \Lambda_{\rho'} F_V)], \\
c_1^{\rho,\rho'}(V^*)ch_2^{\rho,\rho'}(V^*) &=-\frac{1}{2\pi}[\tr(F_V \Lambda_\rho)\tr(\Lambda_{\rho'})+\tr(F_V \Lambda_{\rho'})\tr(\Lambda_\rho)+\tr(\Lambda_\rho\Lambda_{\rho'})\tr(F_V)], \\
c_1^{\rho,\rho'}(V^*)^3 &= -\frac{1}{2\pi}[6\tr(\Lambda_\rho)\tr(\Lambda_{\rho'})\tr(F_V)].  
 \end{align*}
We get,{
\begin{align*}
 T_1=\int_C&-\binom{n-1+k}{k-3}\tr(\Lambda_\rho)\tr(\Lambda_{{\rho'}})\tr(F_V)\\  
 &-(\tr(F_V\Lambda_{\rho})\tr(\Lambda_{{\rho'}})+\tr(F_V\Lambda_{{\rho'}})\tr(\Lambda_{\rho})+\tr(\Lambda_{\rho}\Lambda_{{\rho'}})\tr(F_V))\binom{n+k}{ k-2}\\
 &-\left(\binom{n+1+k}{k-1}+\binom{n+k}{k-2}\right)\tr(\Lambda_{\rho}\Lambda_{{\rho'}}F_V),\\
 T_2= \int_C&\binom{n+k}{k-1}\left(  \tr(\Lambda_\rho \Lambda_{\rho'})(mk\omega_C+\frac{1}{2}c_1(C))\right)\\
 &+\binom{n-1+k}{k-2}\tr(\Lambda_\rho)\tr(\Lambda_{\rho'})\left(mk\omega_C+\frac{1}{2}c_1(C)\right).\\
 \end{align*}
}
Thus,  the leading term of $\tr(A_kB_k)$  (which equals  the leading terms of $T_1+T_2$) is 
\begin{align*}
 \tr(A_kB_k)=& \frac{k^{n+2}}{2\pi(n+2)!}\int_C t_1+O(k^{n+1}),\\
 t_1=& -\tr(\Lambda_\rho)\tr(\Lambda_{\rho'})\tr(F_E)-\tr(F_E \Lambda_\rho)\tr(\Lambda_{\rho'})-\tr(F_E\Lambda_{\rho'})\tr(\Lambda_\rho)\\
 &-\tr(\Lambda_\rho\Lambda_{\rho'})\tr(F_E)-2\tr(\Lambda_\rho\Lambda_{\rho'} F_E) \\
 &+m{(n+2)}(\tr(\Lambda_\rho\Lambda_{\rho'})+\tr(\Lambda_\rho)\tr(\Lambda_{\rho'}))\omega_C.
\end{align*}
Letting $\rho=\rho_i$ and $\rho'=\rho_j$ as for the computation of $w_k(\rho_i)$, we obtain
\begin{align*}
\frac{1}{2\pi}\int_C t_1=& -(\sum_{s} \lambda_s r_s) ( \sum_t \lambda_t' r_t) d_V - ( \sum_{s} \lambda_sd_s ) ( \sum_t \lambda_t' r_t) -  ( \sum_{t} \lambda_t'd_t ) ( \sum_s \lambda_s r_s)\\
   &-(\sum_s \lambda_s\lambda_s' r_s)d_V - 2(\sum_s \lambda_s\lambda_s' d_s)\\
   &+m(n+2)[(\sum_s \lambda_s\lambda_s'r_s) + (\sum_{s} \lambda_s r_s)( \sum_t \lambda_t' r_t)]\omega_C.
\end{align*}
With $\lambda_s=\delta_{si}$ and $\lambda_t=\delta_{sj}$ we deduce from above,
\begin{align*}
\frac{1}{2\pi}\int_C t_1=& -r_i r_j d_V - d_ir_j -  d_j r_i +m(n+2)r_ir_j  &\text{ if }i\neq j,\\
\frac{1}{2\pi}\int_C t_1=& -r_j^2 d_V - 2 d_jr_j -r_jd_V-2d_j+m(n+2)(d_j +r_j^2)& \text{ if }i=j.
\end{align*}
Consequently, if $i\neq j$,
\begin{align*}
\tilde{\alpha}_{ij}=&\frac{1}{2\pi(n+2)!}\int_C t_1=\frac{1}{(n+2)!}r_ir_j[m(n+2)- d_V - \mu_i - \mu_j],\\
=&\frac{1}{\pi_R} \alpha_{ij}(m);\\
\tilde{\alpha}_{jj}=& \frac{1}{2\pi(n+2)!}\int_C t_1=\frac{1}{(n+2)!}r_j(r_j+1) [-d_V - 2\mu_j+m(n+2)],\\
=& \frac{1}{\pi_R}\alpha_{jj}(m),
\end{align*}
where, again, $\alpha_{ij}(m)$  are given by Proposition~\ref{p:computation} with $c=m$.

\smallskip
Recall from the general theory  (see Section~\ref{Preliminaries}) that the algebraic Donaldson--Futaki invariant $\mathfrak{F}(\rho_i)$ of $\rho_i$ on $(M, \cL_m)$ is given (up to a normalizing positive factor) by
$$\mathfrak{F}^{alg}(\rho_i)= \tilde \beta_i - \frac{\tilde \alpha_i \tilde \beta_0}{\tilde \alpha_0}.$$
If we assume now that $V_i$ are {\it indecomposable} and $C$ has genus ${\bf g}\ge 1$, so as the fibre-wise $\T^{\ell}$ action generated by $\rho_i, i=1, \ldots \ell$ corresponds to a maximal torus in ${\rm Aut}^{\rm red}(M)$, the {\it extremal} $\C^{\times}$ action $\rho_{\rm ex}$  of $(M, \cL_m)$ is  generated by 
$$K_{\rm ex}:= \sum_{i=1}^{\ell} \tilde{a}_i K_i, $$
where $K_i$ is a generator of $\rho_i$ and   the rational numbers $\tilde{a}_i$ are given by
$$\langle\rho_i,\rho_{\rm ex}\rangle=\sum_{k=1}^{\ell} \tilde{a}_k \Big(\tilde \alpha_{ik} -\frac{\tilde \alpha_i \tilde \alpha_k }{\tilde \alpha_0}\Big)=\mathfrak{F}^{alg}(\rho_i)= \tilde \beta_i - \frac{\tilde \alpha_i \tilde \beta_0}{\tilde \alpha_0},$$
see Section \ref{sectRK}.

\medskip

We now consider $M=\PP(\bigoplus_{i=0}^{\ell-1}U_i)\to C$ with $U_i$ indecomposable and $C$ of genus ${\bf g}\ge 1$, endowed with  a polarization $\cL_m=\cO(1)_{\PP(E)}\otimes \cO(m)_C$ and  the test configuration $\mathcal M$ with central fibre  $M_0=\PP (\bigoplus_{i=0}^{\ell} V_i)$ given by Lemma~\ref{l:test-configuration}. We use the above computations in order to express  the relative Donaldson--Futaki invariant $\mathfrak{F}^{alg}_{\rho_{\rm ex}^M}(\rho_L)$ on the central fibre $M_0$. For consistency, we let   $V_{i+1}:=U_{i}, i=1, \ldots, (\ell-1)$ and write the generic fibre as $M= \PP(U_0 \oplus \bigoplus_{i=2}^{\ell}V_i)\to C$. We  are also denoting by $\rho_i, i=2, \ldots, \ell$ the  $\C^{\times}$ actions on $M$ by multiplication on $V_i$ and let  $K_i, i=2, \ldots, \ell$ be the corresponding generating vector fields. By the discussion above, $\rho_{\rm ex}^M$ is the $\C^{\times}$ action generated by the vector field
$K^M_{\rm ex} = \sum_{i=2}^{\ell} a_i K_i$
with  $$\sum_{k=2}^{\ell} \tilde{a}_k \Big(\tilde \alpha_{ik} -\frac{\tilde \alpha_i \tilde \alpha_k }{\tilde \alpha_0}\Big) =\mathfrak{F}^{alg}(\rho_i)= \tilde \beta_i - \frac{\tilde \alpha_i \tilde \beta_0}{\tilde \alpha_0}.$$
This implies in particular that $\tilde{a}_k=a_k/4$ where $a_k$ satisfies \eqref{syst}.

\medskip
Now, the $\C^{\times}$ actions $\rho_i$ extend to the central fibre $M_0 = \PP(\bigoplus_{i=0}^{\ell} V_i)$, and the algebraic relative Donaldson--Futaki invariant on the central fibre $M_0$ is (see Definition~\ref{d:relative-futaki-gabor})
\begin{equation*}
\begin{split}
\mathfrak{F}^{alg}_{\rho_{\rm ex}}(\rho_L) &= \mathfrak{F}^{alg}_{\rho_{\rm ex}}(\rho_1)\\
&= \mathfrak{F}^{alg}(\rho_1)  - \langle\rho_1,\rho^M_{\rm ex}\rangle,\\
&=\mathfrak{F}^{alg}(\rho_1) - \sum_{j=2}^{\ell} \tilde{a}_j \Big(\tilde \alpha_{j1} -\frac{\tilde \alpha_j \tilde \alpha_1}{\tilde \alpha_0}\Big),\\
 &= \frac{1}{2\pi_R}\left(\beta_{1} - \frac{\alpha_{1}\beta_0}{\alpha_0} \right) - \frac{1}{4\pi_R}\sum_{j=2}^{\ell} a_j \Big(\alpha_{j1} - \frac{\alpha_{1}\alpha_j}{\alpha_0}\Big),\\
 &=  \frac{1}{4(2\pi)^n}\mathfrak{F}_{\rho_{\rm ex}}(\rho_L).
\end{split}
\end{equation*}
We thus obtain that  Lemma~\ref{l:futaki-computed} is true for ${\bf g}\geq 1$ too.
\begin{prop}\label{l:futaki-computed-1} Let $M=\PP(E)\to C$ with $E=\bigoplus_{k=0}^{\ell-1}U_k$ be a projectivisation of vector bundle over a curve $C$ of genus ${\bf g}\ge 1$ and $L\subset  U_0$ a sub bundle of one of the indecomposable components of $E$. The relative Donaldson--Futaki invariant $\mathfrak{F}_{\rho_{\rm ex}}(\rho_L)$ of the induced $\C^{\times}$ action $\rho_L$ on the central fibre $M_0=P(V)$ with respect to the test configuration of Lemma~\ref{l:test-configuration} and a polarization $\cL_{m}$  is positive multiple  of  \eqref{futaki-computed} where $\alpha_i, \alpha_{ij}$ and $\beta_i$ are given by Proposition~\ref{p:computation} with $c=m$,  and $A$ is the matrix  \eqref{matrixA}.
\end{prop}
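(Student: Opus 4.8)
The plan is to compute the \emph{algebraic} relative Donaldson--Futaki invariant of the test configuration of Lemma~\ref{l:test-configuration} directly, by the equivariant Riemann--Roch method of \cite{Do2}, and then to check that after matching normalizations it equals $\tfrac{1}{4(2\pi)^n}$ times the differential-geometric expression \eqref{relative-futaki-3} already evaluated in Lemma~\ref{l:futaki-computed}. First I would record that, since ${\bf g}\ge1$ forces $\mathfrak{h}^{red}(C)=0$, the fibre-wise torus $\T^{\ell-1}$ (generated by the $\C^{\times}$ actions $\rho_2,\dots,\rho_\ell$ multiplying the factors $V_2,\dots,V_\ell$) is a maximal torus of ${\rm Aut}^{red}(M)$ for the generic fibre $M=\PP(E)$; hence the $\C^{\times}$ action $\rho_{\rm ex}^M$ required in Definition~\ref{d:relative-futaki-gabor} is just the extension to the central fibre $M_0=\PP(V)$ of the extremal action of $(M,\cL_m,\T^{\ell-1})$. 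Using $\langle\rho_{\rm ex}^M,\rho_i\rangle=\mathfrak{F}^{alg}(\rho_i)$ for $i=2,\dots,\ell$ one gets $\mathfrak{F}^{alg}(\rho_{\rm ex}^M)=\langle\rho_{\rm ex}^M,\rho_{\rm ex}^M\rangle$, so Definition~\ref{d:relative-futaki-gabor} reduces to
\begin{equation*}
\mathfrak{F}^{alg}_{\rho_{\rm ex}^M}(\rho_L)=\mathfrak{F}^{alg}(\rho_1)-\langle\rho_1,\rho_{\rm ex}^M\rangle=\Big(\tilde\beta_1-\frac{\tilde\alpha_1\tilde\beta_0}{\tilde\alpha_0}\Big)-\sum_{j=2}^{\ell}\tilde a_j\Big(\tilde\alpha_{j1}-\frac{\tilde\alpha_j\tilde\alpha_1}{\tilde\alpha_0}\Big),
\end{equation*}
where $\rho_1=\rho_L$ is multiplication on $V_1=L$, the $\tilde a_j$ solve the linear system defining $\rho_{\rm ex}^M$, and $\tilde\alpha_0,\tilde\alpha_i,\tilde\alpha_{ij},\tilde\beta_0,\tilde\beta_i$ are the leading coefficients in the $k\gg 0$ expansions of $d_k=\dim H^0(M_0,(\cL_m^0)^k)$, $w_k(\rho_i)=\tr A_{k,i}$ and $w_k(\rho_i,\rho_j)=\tr(A_{k,i}A_{k,j})$.

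The bulk of the work is the evaluation of these coefficients, for which I would use the projection formula $\pi_*(\cL_m^0)^k=S^kV^*\otimes\cO(mk)_C$ (the higher direct images vanishing for $k\gg 0$ by fibre-wise ampleness), reducing each trace to a characteristic-number computation on the curve $C$. For $d_k$ ordinary Riemann--Roch on $C$ suffices; for $w_k(\rho_i)$ and $w_k(\rho_i,\rho_j)$ I would apply the $S^1$- and $S^1\times S^1$-equivariant Riemann--Roch theorems \cite{AB,BV} in the Cartan model as in \cite{Do2}: fix a $\T^{\ell+1}$-invariant metric $h=\bigoplus h_i$ on $V$, take $-\tfrac1{2\pi}F_V+\Lambda_\rho$ (with $\Lambda_\rho=\sum_i\lambda_i\,{\rm Id}_i$, the sign dictated by Definition~\ref{d:convention}) as the equivariant curvature of $V^*$, expand $ch^{\rho}(S^kV^*)\,ch^{\rho}(\cO(mk)_C)\,Todd^{\rho}(C)$ keeping only the part that integrates over $C$, and use $Todd^{\rho}(C)=1+\tfrac12c_1(C)$, $\int_C c_1(C)=2(1-{\bf g})$, $\deg_C\cO(1)_C=1$. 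As $\dim_{\C}C=1$, only $\int_C\tr F_V$, $\int_C\tr(F_V\Lambda)$, $\int_C\tr(\Lambda^2)\,\omega_C$, $\int_C\tr(F_V\Lambda_\rho\Lambda_{\rho'})$ and a handful of similar integrals survive, all expressible through $r_i=\rk V_i$, $d_i=\deg V_i$, $\mu_i=d_i/r_i$. Specializing $\lambda_s=\delta_{si}$ (resp.\ $\lambda_s=\delta_{si},\lambda_t'=\delta_{tj}$) and comparing leading coefficients with Proposition~\ref{p:computation} yields $\tilde\alpha_0=\tfrac1{\pi_R}\alpha_0(m)$, $\tilde\alpha_i=\tfrac1{\pi_R}\alpha_i(m)$, $\tilde\alpha_{ij}=\tfrac1{\pi_R}\alpha_{ij}(m)$, $\tilde\beta_0=\tfrac1{2\pi_R}\beta_0(m)$, $\tilde\beta_i=\tfrac1{2\pi_R}\beta_i(m)$ at $c=m$.

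Substituting these identities, the linear system for $(\tilde a_j)$ becomes $\tfrac14$ times the system \eqref{syst} for $(a_j)$, so $\tilde a_j=a_j/4$, and the displayed formula collapses to
\begin{equation*}
\mathfrak{F}^{alg}_{\rho_{\rm ex}^M}(\rho_L)=\frac{1}{2\pi_R}\Big(\beta_1-\frac{\alpha_1\beta_0}{\alpha_0}\Big)-\frac{1}{4\pi_R}\sum_{j=2}^{\ell}a_j\Big(\alpha_{j1}-\frac{\alpha_1\alpha_j}{\alpha_0}\Big)=\frac{1}{4(2\pi)^n}\,\mathfrak{F}_{\rho_{\rm ex}}(\rho_L),
\end{equation*}
i.e.\ the same expression as on the right-hand side of \eqref{relative-futaki-3}. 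Since $\mathring{\mathfrak{F}}_{\rho_{\rm ex}}(\rho_L)=\tfrac12\tfrac{\alpha_0\pi_R}{(2\pi)^n}\mathfrak{F}_{\rho_{\rm ex}}(\rho_L)$ and $\alpha_0(m)>0$ (as $\cL_m^0$ is ample, compare \eqref{c-constraint}), this exhibits $\mathfrak{F}_{\rho_{\rm ex}}(\rho_L)$ as a positive multiple of \eqref{futaki-computed} with all $\alpha_i,\alpha_{ij},\beta_i$ given by Proposition~\ref{p:computation} at $c=m$, which is the assertion; in particular Lemma~\ref{l:futaki-computed} now holds also for ${\bf g}=1$. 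I expect the main obstacle to be the equivariant Riemann--Roch bookkeeping --- carrying the binomial factors $\binom{n-1+k}{k-j}$ correctly through the expansions and keeping the sign and normalization conventions ($\pi_R$, the $(2\pi)^n$, the factor $\tfrac14$ of \cite{Do2}) consistent throughout --- rather than anything conceptual; the one structural check is that the extension of the generic-fibre extremal action is indeed the action relative to which Definition~\ref{d:relative-futaki-gabor} is taken, which is exactly where ${\bf g}\ge1$ is used.
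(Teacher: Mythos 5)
Your proposal is correct and follows essentially the same route as the paper: the projection formula $\pi_*(\cL_m^0)^k=S^kV^*\otimes\cO(mk)_C$, the $S^1$- and $S^1\times S^1$-equivariant Riemann--Roch computations in the Cartan model yielding $\tilde\alpha_i=\alpha_i(m)/\pi_R$, $\tilde\beta_i=\beta_i(m)/2\pi_R$, $\tilde\alpha_{ij}=\alpha_{ij}(m)/\pi_R$, the identification $\tilde a_j=a_j/4$, and the final matching $\mathfrak{F}^{alg}_{\rho_{\rm ex}}(\rho_L)=\tfrac{1}{4(2\pi)^n}\mathfrak{F}_{\rho_{\rm ex}}(\rho_L)$ are exactly the steps of Section~\ref{sectDF}. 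Nothing essential is missing.
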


\subsection{The case of an indecomposable bundle}\label{s:ell=1} This is the case $\ell=1$ in the setting  of the previous sections, i.e. $M=P(E)$ with  $E$ an indecomposable vector bundle over $C$, $L \subset E$ is a sub-bundle, and the central fibre of the test-configuration given by Lemma~\ref{l:test-configuration} is $M_0=\PP(F\oplus L)$ with $F=E/L$. In this case, the reduced automorphisms group ${\rm Aut}^{red}(M)$ then has rank $0$ and, therefore,   the relative  Donaldson--Futaki invariant reduces to the usual Futaki invariant $\mathfrak{F}(\rho_K)$ on the central fibre $M_0$. This is computed (algebraically) in  \cite[Theorem 5.13]{RT}  (see also Theorem~\ref{thm:csc} in the introduction) and it is shown that it is given by a positive multiple of $(\mu(E) - \mu(L))$. For the sake of
completeness, and to make a better contact between \cite{RT} and  the setting of this paper, we compute
below \eqref{futaki-computed}.

As the Donaldson--Futaki invariant \eqref{relative-futaki-3} reduces to the usual Futaki invariant (i.e. $f^M_{\rm ex}=0$ in this case), \eqref{futaki-computed} becomes
$$\mathring{\mathfrak{F}}_{\rho_{\rm ex}}(\rho_{L}) = (\alpha_0\beta_1-\alpha_1\beta_0),$$
so that, by Lemma~\ref{betakalpha0}, we obtain
\begin{equation}
\begin{split}
\mathring{\mathfrak{F}}_{\rho_{\rm ex}}(\rho_{L}) &= \frac{\pi_R^2}{r_V!(r_V+1)!} \left(2r_V(r_Ld_V-r_Vd_L)c +2 ({\bf g}-1-d_V)(r_Ld_V-r_Vd_L)\right)\\
                                                                                        &=\frac{2 \pi_R^2 r_V r_L}{r_V!(r_V+1)!} \big(\mu(E)- \mu(L)\big)\left(r_V c -d_V+ ({\bf g}-1)\right),
                                                                                        \end{split}
\end{equation}
where,  we recall, $\pi_R= (r_L-1)!(r_V-r_L -1)!$, $c$ is the constant determined by the polarization   $\cL_m$ on $M$ with $m=c$, and the expression $\left(r_V c -d_V+ ({\bf g}-1)\right)$  is manifestly positive by  \eqref{c-constraint}.

\begin{rem} In the case $\ell= 1$, the Donaldson--Futaki invariant on $M_0$ is computed
(by essentially the same construction)   in  \cite[Prop. 6]{ACGT-0}): up to a positive constant it is given by  the expression $-(\underline{\mathbf{c}}(x)/x)$,  
where $\underline{\mathbf{c}}(x)$ is related to the strictly negative function $c(s,x)$ appearing on page 575 of \cite{ACGT-0} by 
$\underline{\mathbf{c}}(x) := c\Big(\frac{2(1-{\bf g})}{(\mu(F)-\mu(L))}, x\Big),$ 
and $x= \frac{(\mu(F)-\mu(L)}{c}$. A straightforward computation shows that the expressions agree (up to multiplication of a positive constant).
\end{rem}

\subsection{The case $\ell=2$}\label{sum2} We now consider the case when $E=U_0 \oplus U_1$ is the direct sum of two \textit{indecomposable} bundles $U_0,U_1$. This is also equivalent to the assumption that the rank of the reduced group of automorphisms ${\rm Aut}^{red}(M)$ equals $1$.

In this case, the matrix $A$ induced by \eqref{matrixA} is a scalar,  $A=\Big(\frac{\alpha_0}{\alpha_{22}-\alpha_2^2}\Big)$ and   $\alpha_0\alpha_{22} -\alpha_2^2>0$ by Cauchy--Schwarz (this is also a positive multiple of the  $L^2$ square norm of the function $f_2$, see Sect.~\ref{s:calabi-anzats}). Consequently, we can restrict our attention on 
$$(\alpha_0\alpha_{22} -\alpha_2^2)\mathring{\mathfrak{F}}_{\rho_{\rm ex}}(\rho_L)=(\alpha_0\alpha_{22} -\alpha_2^2)(\alpha_0\beta_1 -\alpha_1\beta_0) - (\alpha_0\alpha_{12} - \alpha_1\alpha_2)(\beta_2\alpha_0 -\alpha_2\beta_0).$$

\begin{prop}\label{case2line}
 For any admissible K\"ahler class, the relative Donaldson--Futaki invariant $\mathfrak{F}_{\rho_{\rm ex}}(\rho_L)$ has the sign of $\mu_0-\mu_1$.
\end{prop}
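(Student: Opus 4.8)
The plan is to convert the sign question into an explicit factorization in the variable $c=m$. Since $\ell=2$, the matrix $A$ of \eqref{matrixA} is the scalar $\bigl(\alpha_{22}-\alpha_2^2/\alpha_0\bigr)$, and $\alpha_0\alpha_{22}-\alpha_2^2>0$ by Cauchy--Schwarz (it is, up to a positive factor, the squared $L^2$-norm of the normalized Hamiltonian $f_2$ of Section~\ref{s:calabi-anzats}); moreover $\alpha_0>0$ by admissibility. Hence, as recorded just before the statement, $\mathfrak{F}_{\rho_{\rm ex}}(\rho_L)$ has the sign of
\[
P(c):=(\alpha_0\alpha_{22}-\alpha_2^2)(\alpha_0\beta_1-\alpha_1\beta_0)-(\alpha_0\alpha_{12}-\alpha_1\alpha_2)(\alpha_0\beta_2-\alpha_2\beta_0),
\]
all quantities being evaluated at $c=m$, with $c=m>\mu_i$ for all $i$ by \eqref{c-constraint}.

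Next I would substitute the closed forms obtained above. By Lemma~\ref{betakalpha0}, for $j\in\{1,2\}$ one has $\alpha_0\beta_j-\alpha_j\beta_0=\tfrac{\pi_R^2}{r_V!(r_V+1)!}\,(r_jd_V-r_Vd_j)\bigl(2r_Vc+2({\bf g}-1-d_V)\bigr)$, so the factor $2\bigl(r_Vc-d_V+{\bf g}-1\bigr)$ — strictly positive on the admissible range exactly as in Section~\ref{s:ell=1} — pulls out of $P(c)$, leaving
\[
R(c):=(\gamma_{22}+\gamma'_2)(r_1d_V-r_Vd_1)-\gamma_{12}(r_2d_V-r_Vd_2),
\]
where $\gamma_{22}+\gamma'_2=\alpha_0\alpha_{22}-\alpha_2^2$ and $\gamma_{12}=\alpha_0\alpha_{12}-\alpha_1\alpha_2$ are the quadratics in $c$ given by Lemma~\ref{gamma}. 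Then I would insert $d_V=\sum_i r_i\mu_i$ (recall that in this test configuration $V_0\oplus V_1$ has underlying bundle $U_0$ and $V_2=U_1$, so $\mu_0-\mu_1$ is the relevant slope difference) and collect terms. The point I expect to emerge is that all the $\mu_2$-dependence cancels; conceptually this is forced, since the relative invariant is orthogonal to the torus direction $\rho_2$ (scaling $U_1=V_2$) and can therefore only detect the internal splitting of $U_0$, exactly as in the genuinely rank-one computation of Section~\ref{s:ell=1}. Concretely, after this cancellation $R(c)$ should reduce to $(\mu_0-\mu_1)$ times a positive constant times the quadratic $Q_0(c):=r_V(r_V+2)c^2-2\bigl(d_V(r_V+1)+r_V\mu_2\bigr)c+d_V(2\mu_2+d_V)$, which is precisely the bracket appearing in $\gamma'_2$.

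Finally I would check that $Q_0(c)>0$ on the admissible cone $c>\max_i\mu_i$. A short computation gives that the discriminant of $Q_0$ equals $(d_V-r_V\mu_2)^2\ge 0$ and that its two (real) roots are $\mu(V)=d_V/r_V$ and $(d_V+2\mu_2)/(r_V+2)$; both are weighted averages of the slopes $\mu_0,\mu_1,\mu_2$, hence strictly less than $c$, so $Q_0(c)>0$ on the admissible range. Combining the three reductions shows that $\mathfrak{F}_{\rho_{\rm ex}}(\rho_L)$ has the sign of $\mu_0-\mu_1$, which is Proposition~\ref{case2line}. The main obstacle is the middle step: the cancellation of the $\mu_2$- and genus-dependent contributions in $R(c)$ is not conceptually deep but requires careful polynomial bookkeeping with the explicit quadratics of Lemma~\ref{gamma} and the identity $d_V=\sum_i r_i\mu_i$; by contrast the positive factor $r_Vc-d_V+{\bf g}-1$ and the positivity of $Q_0$ are immediate consequences of \eqref{c-constraint}.
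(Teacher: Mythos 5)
Your proposal is correct and follows essentially the same route as the paper: a direct computation reducing the sign of $\mathfrak{F}_{\rho_{\rm ex}}(\rho_L)$ to that of $(\alpha_0\alpha_{22}-\alpha_2^2)(\alpha_0\beta_1-\alpha_1\beta_0)-(\alpha_0\alpha_{12}-\alpha_1\alpha_2)(\alpha_0\beta_2-\alpha_2\beta_0)$, which factors as a positive constant times $(\mu_0-\mu_1)\,\Delta_c(\Delta_c+{\bf g}-1)(\Delta_c+2(c-\mu_2))$ with $\Delta_c=r_Vc-d_V$ --- your $Q_0(c)$ is exactly $\Delta_c(\Delta_c+2(c-\mu_2))$, so your factorization agrees with the paper's $\Gamma_0(\mu_0-\mu_1)$, and your positivity arguments for each factor match theirs. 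The only blemish is the phrase ``all the $\mu_2$-dependence cancels,'' which contradicts your own (correct) formula for $Q_0$; what actually happens is that $\mu_2$ survives only inside the manifestly positive factor $\Delta_c+2(c-\mu_2)$, so it cannot affect the sign.
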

\begin{proof}
 This is a direct computation of the quantities $\alpha_0,\alpha_1,\alpha_2,\beta_0,\beta_1,\beta_2,\alpha_{12},\alpha_{22}$ using Lemma \ref{techn}. It is obtained that
 \begin{align}\label{resleq2}(\alpha_0\alpha_{22} -\alpha_2^2)\mathring{\mathfrak{F}}_{\rho_{\rm ex}}(\rho_L)=\Gamma_0 (\mu_0-\mu_1),\end{align}
where
\begin{align*}
 \Delta_c&=r_Vc-d_V,\\
 \Gamma_0&=\Delta_c (\Delta_c + {\bf g}-1)(\Delta_c+ 2(c-\mu_2))\Gamma_1, \\
 \Gamma_1&=\frac{2\pi_R^4r_0r_1r_2(r_2-1)!^4(r_1-1)!^4(r_0-1)!^4}{(r_V+2)!(r_V+1)!r_V!^2}>0.
\end{align*}
Note that with \eqref{c-constraint}, one has $\Delta_c>0$. This finishes the proof. 
Of course the full expression of $\mathfrak{F}_{\rho_{\rm ex}}$ can be provided but it is particularly lengthy even in this case. \end{proof}

\begin{rem}
Let us mention at that stage that the classical Donaldson--Futaki invariant $\mathfrak{F}(\rho_L)$ is positive proportional to $\alpha_0\beta_1 -\alpha_1\beta_0$ which is
$$\alpha_0\beta_1 -\alpha_1\beta_0=\Gamma'_1(\Delta_c + {\bf g}-1)(\mu_2-\mu_1).$$
with $\Gamma_1'=\frac{2\pi_R^4r_1r_2(r_1-1)!^2(r_2-1)!^2(r_0-1)!^2}{(r_V+1)!r_V!}$. This points out that the computation of the classical Donaldson--Futaki invariant does not bring any information on the stability of $U_0$.
\end{rem}

\subsection{The general case and the proof of Theorem~\ref{thm:main}}\label{generalcase} 

With the notation of Section~\ref{secRFI}, we aim to compute the sign of following quantity
$$2\mathring{\mathfrak{F}}_{\rho_{\rm ex}}(\rho_{L}) = 2(\alpha_0\beta_1-\alpha_1\beta_0)- \sum_{j=2}^{\ell} a_j (\alpha_0\alpha_{j1}-\alpha_1\alpha_j).$$ 
Both the differential geometric and algebraic approaches  lead to the same difficulty of controlling the terms $a_j$. In order to do so, we are going to expand the unknowns $a_j$, solutions of \eqref{syst}, in Taylor series with respect to the variable $c$ (recall that $c=m$).
Our method consists in evaluating the quantities 
$$\Sigma_1=\sum_{j=2}^{\ell} a_jr_j, \hspace{1cm}\Sigma_2=\sum_{j=2}^{\ell} a_jd_j.$$
We write the Taylor expansions
\begin{align*}
       \Sigma_1&=\sum_{i=1}^\infty u_i c^{-i}, \\
       \Sigma_2&=\sum_{i=1}^\infty v_i c^{-i}. \\
\end{align*}
From the expression of the matrix $A$ in \eqref{matrixA} and the asymptotics above, it is clear that 
$a_j=\sum_{r=2}^\ell (A^{-1})_{rj}\left(\beta_r-\frac{\alpha_r}{\alpha_0}\beta_0\right)$ is at most $O(1/c)$. Consequently, $\Sigma_1$ and $\Sigma_2$ are at most 
$O(1/c)$ for $c\to +\infty$.
In order to ease the computations, we will assume $$d_V=0.$$ This is not a restrictive assumption. Actually, 
we can tensorize $V$  with the rational  line bundle $\cO(-\mu(V))_C$  and notice that this does not change the underlying variety $M$.  It only introduces a translation with  $-\mu(V)$ of the parameter $c=m$ of the polarizations on $M$.

\begin{notation}
 We denote $\mu_{01}=\frac{d_0+d_1}{r_0+r_1}$.
\end{notation}

\begin{lemma} \label{S}
 With notations as before, the terms $(u_i),(v_i)$ satisfy the system $(S)$
 {
\begin{equation*}
 (S)\begin{cases}
  u_1= 4{r_V^2}\mu_{01},\\
  (r_V+2)u_2 -2\mu_{01} u_1 
  -2 v_1=4 (r_V+2)({\bf g}-1)r_V \mu_{01},\\
   \text{and for }i\geq 1,\\
   (r_V+2)u_{i+2} -2\mu_{01} u_{i+1} -2v_{i+1}+ \frac{(r_V+2)}{(r_V+1)}\mu_{01} v_{i}=0\\
 \end{cases}
\end{equation*}
}
In particular, this provides the first term $u_1$ of the expansion of $\Sigma_1$.
\end{lemma}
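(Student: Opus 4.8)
The plan is to derive the system $(S)$ directly from the linear system \eqref{syst} defining the $a_j$, by multiplying the equations of \eqref{syst} by suitable weights and summing. Concretely, the first $\ell-1$ equations of \eqref{syst} read $a_0\alpha_k + \sum_{j=2}^\ell a_j\alpha_{jk} = 2\beta_k$ for $k=2,\dots,\ell$; I would multiply the $k$-th equation by $r_k$ and sum over $k$, and separately multiply it by $1$ (or by $d_k$) and sum, so as to produce expressions involving $\Sigma_1 = \sum a_j r_j$, $\Sigma_2 = \sum a_j d_j$, together with $\sum_k r_k\alpha_k$, $\sum_{j,k} r_k\alpha_{jk}$, etc. The point is that all these auxiliary sums are explicit polynomials in $c$ by Proposition~\ref{p:computation}: e.g. $\sum_{k=2}^\ell r_k\alpha_{jk}$ and $\sum_{k=2}^\ell r_k\beta_k$ can be computed in closed form since $\alpha_{jk}$, $\beta_k$ are given there. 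I would also use the last equation of \eqref{syst} to eliminate $a_0$ in terms of $\Sigma_1,\Sigma_2$ and the $\alpha_0,\alpha_j,\beta_0$ data.

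After substituting $d_V=0$ (as justified in the text by tensoring with $\cO(-\mu(V))_C$) and the explicit formulas from Proposition~\ref{p:computation}, the resulting two scalar identities become polynomial relations between $\Sigma_1$, $\Sigma_2$ and $c$ with coefficients depending only on $r_V$, $\mathbf g$, and the combination $\mu_{01}=(d_0+d_1)/(r_0+r_1)$ — the latter appearing because the sums $\sum_{k\ge 2}(\cdot)$ are complementary to the $k=0,1$ terms and, after the dust settles, the only surviving base-degree data is $d_0+d_1$ (equivalently $-(d_0+d_1)=d_2+\dots+d_\ell$ when $d_V=0$). Then I would substitute the Taylor ansatz $\Sigma_1=\sum_{i\ge1}u_ic^{-i}$, $\Sigma_2=\sum_{i\ge1}v_ic^{-i}$ into these two polynomial-in-$c$ identities. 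Matching the coefficient of each power $c^{-i}$ (for $i=0$, giving the existence of the leading term and the equation $u_1=4r_V^2\mu_{01}$; for $i=-1$ resp. the next orders giving the second and the general recursive equation) yields precisely the three families of equations in $(S)$. The fact that $\Sigma_1,\Sigma_2$ are $O(1/c)$, already argued in the text from the block structure of $A^{-1}$ and the asymptotics $\alpha_{jk}\sim c$, $\beta_k\sim c$, $\alpha_0\sim c$, legitimizes the ansatz starting at order $c^{-1}$.

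The main obstacle I expect is purely bookkeeping: correctly assembling the combinations $\sum_k r_k\alpha_{jk}$ (which splits into the $j=k$ "diagonal" term $\alpha_{jj}$ contributing an extra $r_j$ and the off-diagonal terms), $\sum_k r_k\beta_k$, $\sum_k r_k\alpha_k$, and especially keeping track of which indices run over $\{2,\dots,\ell\}$ versus $\{0,1,\dots,\ell\}$, so that the complementary contribution of the indices $0,1$ produces exactly the $\mu_{01}$-dependence and the factors $(r_V+2)$, $(r_V+1)$ in the denominators of the recursion. One has to be careful that after setting $d_V=0$ the identity $\sum_{k=0}^\ell d_k=0$ is used to rewrite $\sum_{k=2}^\ell d_k=-(d_0+d_1)=-\mu_{01}(r_0+r_1)$ and $\sum_{k=2}^\ell r_k=r_V-(r_0+r_1)$, and these two facts are what make $\mu_{01}$ the only relevant invariant. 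Once the two master identities relating $\Sigma_1$, $\Sigma_2$, $c$ are in hand, extracting $(S)$ is a routine comparison of coefficients; I would present the two master identities explicitly and then just state that coefficient comparison yields $(S)$, relegating the arithmetic to a direct check using Proposition~\ref{p:computation} and Lemma~\ref{techn}.
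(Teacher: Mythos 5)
Your plan is essentially the paper's own proof: the paper eliminates $a_0$ from \eqref{syst} to get $\sum_{j=2}^{\ell} a_j(\alpha_{jk}\alpha_0- \alpha_j\alpha_k)=2(\beta_k\alpha_0-\beta_0\alpha_k)$, sums these (unweighted) over $k=2,\dots,\ell$, and uses Lemma~\ref{gamma}, Lemma~\ref{betakalpha0} and $d_V=0$ to collapse everything into the single linear identity \eqref{linSigmas1} in $\Sigma_1,\Sigma_2$ with exactly the $\mu_{01}$-dependence you predict, after which $(S)$ is the term-by-term comparison of powers of $c$. Only the plain (weight $1$) sum over $k$ is needed --- $(S)$ is precisely the coefficient expansion of that one identity, so your second ($r_k$- or $d_k$-weighted) master relation is superfluous for this lemma (a second relation between $\Sigma_1$ and $\Sigma_2$ is only derived later, in Proposition~\ref{proppositivity}); also note that $u_1=4r_V^2\mu_{01}$ arises from matching the coefficient of $c^{+1}$, not $c^{0}$.
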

\begin{proof}
Actually, the system \eqref{syst} implies that for $k\neq 0$, $k\neq 1$,
\begin{equation}
 \label{newsyst} \sum_{j=2}^{\ell} a_j(\alpha_{jk}\alpha_0- \alpha_j\alpha_k)=2(\beta_k\alpha_0-\beta_0\alpha_k).
\end{equation}
We expand each equation using the expressions of $(\alpha_{jk}\alpha_0- \alpha_j\alpha_k)$. Then we sum the equations from $k=2$ to $\ell$. This way, we obtain 
{
\begin{align}\label{linSigmas1}
 &[(r_V+2)c^2-2\mu_{01} c]\Sigma_1\\ \nonumber
 &+[-2c+\frac{(r_V+2)}{(r_V+1)}\mu_{01}]\Sigma_2 =2 \frac{r_V!(r_V+1)!(r_V+2)}{\pi_R^2 (r_0+r_1)} \sum_{k=2}^{\ell}(\beta_k\alpha_0-\beta_0\alpha_k).
\end{align}
}We use Lemma \ref{betakalpha0}. Eventually, we obtain the system by using the Taylor expansions of $\Sigma_1,\Sigma_2$.
\end{proof}

We explain briefly how the last result allows us to compute the expansions of $a_j$. From \eqref{newsyst}, at a fixed $k$, we have at first order in $c$,
\begin{align*}
 \gamma'_k a_k =& 2(\beta_k\alpha_0-\beta_0\alpha_k)- \sum_{j=2}^\ell a_j\gamma_{jk}\\
               =& -4\frac{\pi_R^2}{(r_V-1)!(r_V+1)!}r_Vd_kc\\
               &+ \frac{\pi^2_R}{(r_V+1)!^2(r_V+2)}(r_k(r_V+2)(r_V+1)c^2)\frac{u_1}{c}+O(1),
\end{align*}
where we have used the fact that $\sum_{j=2}^\ell a_j\gamma_{jk}$ can be written in terms of $\Sigma_1$ and $\Sigma_2$. This gives from Lemma \ref{gamma},
$$a_k={4r_V}(\mu_{01}-\mu_k)\frac{1}{c}+O(1/c^2).$$
From this expression, one can derive the first term of $\Sigma_2$, by summing, as
\begin{align}\label{v1}
{v_1} &=-{4r_V}\left( \frac{(d_0+d_1)^2}{(r_0+r_1)} + \sum_{j=2}^{\ell}\frac{d_j^2}{r_j}\right).
\end{align}
Back to $(S)$, we can deduce from $(u_1,v_1)$ the value of $u_2$ and apply the same trick recursively to deduce all the values of $(u_i),(v_i)$. 
This way we get
{
\begin{align}\label{u2}
 u_2=&-\frac{8r_V}{(r_V+2)}\left(\frac{(d_0+d_1)^2}{r_0+r_1}+\sum_{j=2}^{\ell}\frac{d_j^2}{r_j}\right) +{4r_V\mu_{01}}\left( {\bf g}-1+\frac{2r_V}{{r_V+2}}\mu_{01}\right)  
\end{align}
}
We are now ready to prove the main technical result of this section.
\begin{prop}\label{mainprop}
We have the following asymptotic expansion of the normalized relative Donaldson--Futaki invariant,
$$\mathring{\mathfrak{F}}_{\rho_{\rm ex}}(\rho_{L})=(\mu_0-\mu_1)(Fut_1c +Fut_2 + \frac{Fut_3}{c}+\frac{Fut_4}{c^2}+...)$$
where
$Fut_1>0$ and the $Fut_i$ are explicit for $i=1,2$.
\end{prop}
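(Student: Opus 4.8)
The plan is to obtain the asymptotic expansion of $\mathring{\mathfrak{F}}_{\rho_{\rm ex}}(\rho_L)$ in powers of $1/c$ by substituting the Taylor expansions of $\Sigma_1$ and $\Sigma_2$ into the explicit formula
$$2\mathring{\mathfrak{F}}_{\rho_{\rm ex}}(\rho_{L}) = 2(\alpha_0\beta_1-\alpha_1\beta_0)- \sum_{j=2}^{\ell} a_j (\alpha_0\alpha_{j1}-\alpha_1\alpha_j),$$
and then tracking the overall factor $(\mu_0-\mu_1)$. The first step is to rewrite $\alpha_0\alpha_{j1}-\alpha_1\alpha_j = \gamma_{j1}$ using Lemma~\ref{gamma}, and observe from the explicit form of $\gamma_{j1}$ (with $d_V=0$) that the sum $\sum_{j=2}^{\ell} a_j\gamma_{j1}$ is a polynomial in $c$ whose coefficients are linear combinations of $\Sigma_1=\sum a_jr_j$ and $\Sigma_2=\sum a_jd_j$ with coefficients depending only on $c,\mu_1,\mu_0,r_V$ and the global data — \emph{not} on the individual $(r_j,d_j,\mu_j)$ for $j\geq 2$. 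This is the key reduction: it means the whole relative Donaldson--Futaki invariant is expressible through $\Sigma_1,\Sigma_2,\alpha_0,\alpha_1,\beta_0,\beta_1$, and hence through the expansions governed by the system $(S)$ of Lemma~\ref{S}.

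Next, I would feed in $u_1=4r_V^2\mu_{01}$ from Lemma~\ref{S} and the expression $v_1$ from \eqref{v1} (and, for $Fut_2$, the value $u_2$ from \eqref{u2} together with $v_2$ obtained from $(S)$ by the same recursive trick). Using Lemma~\ref{betakalpha0} for $\alpha_0\beta_1-\alpha_1\beta_0$ and Proposition~\ref{p:computation} for all the individual $\alpha$'s and $\beta$'s (all with $c=m$, $d_V=0$), one collects the coefficient of each power of $c$ in $\mathring{\mathfrak{F}}_{\rho_{\rm ex}}(\rho_L)$. The claim is that $(\mu_0-\mu_1)$ factors out of every coefficient. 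To see this structurally rather than by brute force, note that $\mathring{\mathfrak{F}}_{\rho_{\rm ex}}(\rho_L)$ must vanish identically when $L$ and its quotient $F_0=U_0/L$ have the same slope — equivalently, when $U_0$ is replaced by a direct sum $L\oplus F_0$ of equal slope, the test configuration degenerates to a product and the relative Futaki invariant is zero by the product-configuration clause; tracking the dependence on $\deg L$, the vanishing locus is exactly $\{\mu(L)=\mu(U_0)\}=\{\mu_0=\mu_1\}$ in the notation $V_0=U_0/L$, $V_1=L$ (up to the affine reparametrization $d_V=0$). Since $\mathring{\mathfrak{F}}_{\rho_{\rm ex}}(\rho_L)$ is a rational function in the $d_i$'s (hence in $\mu_0-\mu_1$) whose numerator vanishes on $\mu_0=\mu_1$, the linear factor $(\mu_0-\mu_1)$ divides it. Extracting the cofactor then gives $Fut_1,Fut_2,\dots$, and a direct (if lengthy) evaluation of the leading coefficient shows $Fut_1>0$; concretely $Fut_1$ will be a positive rational multiple of $\pi_R^2$ and factorials, of the same shape as $\Gamma_1$ in Proposition~\ref{case2line}.

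The main obstacle is purely organizational: carrying out the substitution of the $(u_i,v_i)$-expansions into the degree-$(n{+}2)$-type polynomial expressions for the $\alpha$'s and $\beta$'s, keeping the bookkeeping of powers of $c$ straight, and verifying the $(\mu_0-\mu_1)$-divisibility at the level of the first two coefficients explicitly (rather than only invoking the structural argument). The case $\ell=2$ in Proposition~\ref{case2line} is the template — there one sees both the factorization $\Gamma_0(\mu_0-\mu_1)$ and the positivity of the leading factor $\Gamma_1$ — and the general case follows the same pattern with the extra terms $\sum_{j\ge 2}a_j\gamma_{j1}$ absorbed through $\Sigma_1,\Sigma_2$. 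The positivity $Fut_1>0$ reduces, after the dust settles, to positivity of an explicit product of factorials together with the admissibility inequality $\Delta_c=r_Vc-d_V>0$ from \eqref{c-constraint}, exactly as in the $\ell\le 2$ computations.
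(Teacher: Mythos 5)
Your first two paragraphs reproduce the paper's strategy faithfully: the reduction of $\sum_{j\ge2}a_j\gamma_{j1}$ to a combination of $\Sigma_1,\Sigma_2$ (valid, since with $d_V=0$ each $\gamma_{j1}$ is linear in $r_j,d_j$ with coefficients depending only on global data), the use of Lemma~\ref{S}, \eqref{v1} and \eqref{u2} to get $Fut_1$ and $Fut_2$, and the positivity of $Fut_1$. The gap is in your justification that \emph{every} higher coefficient carries the factor $(\mu_0-\mu_1)$. The quantity being expanded is the relative Donaldson--Futaki invariant taken with respect to $\rho_{\rm ex}$, the extremal action of the \emph{generic} fibre, i.e.\ defined by the sub-torus $\T^{\ell-1}$ acting on $V_2,\dots,V_\ell$ --- not with respect to the maximal torus $\T^{\ell}$ of the central fibre. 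The product-configuration vanishing you invoke applies only to actions lying in the torus used to define the extremal action: relative to $\T^{\ell}$ the invariant of $\rho_L$ vanishes identically for \emph{all} slopes (so that reading yields no divisibility information), while relative to $\T^{\ell-1}$ it equals (up to a positive factor) the component of the extremal field of $(M_0,\T^{\ell})$ along $f_1^{\perp}$, and its vanishing on $\{\mu_0=\mu_1\}$ is precisely the nontrivial content to be proven, not a consequence of the definition. Moreover the test configuration does not ``degenerate to a product'' when the slopes coincide: it is built from a fixed nonzero extension class $e\in{\rm ext}^1(L,F_0)$ (nonzero because $U_0$ is indecomposable), and the central fibre, the action $\rho_L$, and the numerical data entering \eqref{futaki-computed} are unchanged by the condition $\mu(L)=\mu(U_0/L)$.

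The paper closes this gap by a purely algebraic identity rather than a structural argument: writing the tail of the expansion as $-\sum_{j\ge2}a_j\gamma_{j1}$ in terms of the $(u_i,v_i)$, and substituting the recursion of system $(S)$ for $u_{i+2}$, the coefficient $\widehat{Fut}_{i+2}$ collapses to
$$\frac{\pi_R^2}{(r_V+1)!^2(r_V+2)(r_0+r_1)}\bigl(2(r_V+1)u_{i+1}-(r_V+2)v_i\bigr)(r_1d_0-d_1r_0),$$
and $r_1d_0-d_1r_0=r_0r_1(\mu_0-\mu_1)$. That cancellation is the missing idea; without it (or an honest proof that the invariant vanishes on the whole hypersurface $\{\mu_0=\mu_1\}$ for every admissible $c$), your proposal establishes the factorization only for the coefficients you compute explicitly, i.e.\ $i=1,2$.
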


\begin{proof}
The computation of $\mathring{\mathfrak{F}}_{\rho_{\rm ex}}(\rho_{L})$ at first order in the $c$ variable depends only on the asymptotic expansion of $\Sigma_1$ at first order.
Using the expressions of $\alpha_0\beta_1-\alpha_1\beta_0$ and $\sum_{j=2}^{\ell}a_j\gamma_{j1}$, we obtain from Lemma \ref{gamma},  Lemma \ref{betakalpha0} and Lemma \ref{S} that provides the value of $u_1$, that 
 \begin{align*}\mathring{\mathfrak{F}}_{\rho_{\rm ex}}(\rho_{L})=&(\alpha_0\beta_1-\alpha_1\beta_0)-\frac{1}{2} \sum_{j=2}^{\ell} a_j(\alpha_{j1}\alpha_0-\alpha_1\alpha_j),\\
 =&(\alpha_0\beta_1-\alpha_1\beta_0)-\frac{1}{2} \sum_{j=2}^{\ell} a_j\gamma_{j1},\\
=& -2c\frac{\pi_R^2}{r_V!(r_V+1)!}r_V^2d_1+2c  \frac{\pi_R^2r_1}{(r_V+1)!r_V!} r_V^2 \mu_{01}    +O(1),\\
 =& 2c\frac{\pi_R^2 r_Vr_0r_1}{(r_V-1)!(r_V+1)!(r_0+r_1)}\left( \mu_0-\mu_1 \right)+O(1).\\ 
\end{align*}
Thus $Fut_1=\frac{2\pi_R^2r_Vr_0r_1}{(r_V-1)!(r_V+1)!(r_0+r_1)}>0$.
Using \eqref{u2} and \eqref{v1}, we obtain by brute force
$$\mathring{\mathfrak{F}}_{\rho_{\rm ex}}(\rho_{L})=(\mu_0-\mu_1)Fut_1c + (\mu_0-\mu_1)Fut_2 +O(1/c)$$ with explicitly
\begin{align*}Fut_2=& \frac{2r_0r_1\pi_R^2}{(r_V-1)!(r_V+2)!(r_0+r_1)}  [(r_V+2)({\bf g}-1) +2r_V\mu_{01}].
\end{align*}
Eventually, we justify that all the other terms of the expansion of $\alpha_0\mathfrak{F}_{\rho_{\rm ex}}$ are multiples of $(\mu_0-\mu_1)$.
In order to do so, we notice from Lemma \ref{betakalpha0} that these terms are given by the expansion of $-\sum_{j=2}^{\ell}a_j\gamma_{j1}=\sum_{i\geq -1}\frac{\widehat{Fut}_i}{c^i}$. This is given, up to a multiplicative factor $\frac{\pi_R^2(r_V+1)}{(r_V+1)!^2(r_V+2)}$, by
\begin{align*}
 r_1(r_V+2)\sum_{i\geq 1} \frac{u_{i+2}}{c^i}
 -2d_1\sum_{i\geq 1} \frac{u_{i+1}}{c^i}
   -2r_1\sum_{i\geq 1}\frac{v_{i+1}}{c^i} +d_1\frac{(r_V+2)}{(r_V+1)}\sum_{i\geq 1}\frac{v_i}{c^i}.
\end{align*}
Hence, we are doomed to check that
\begin{align*}
[r_1(r_V+2)(r_V+1)u_{i+2} -2(r_V+1)d_1u_{i+1} -2(r_V+1)r_1v_{i+1}+d_1(r_V+2)v_i],
\end{align*} 
are multiples of $(\mu_0-\mu_1)$ for $i\geq 1$. We use the last relationship given by $(S)$ in Lemma \ref{S}. This provides by replacing $u_{i+2}$
that 
$${\widehat{Fut}_{i+2}}=\frac{\pi_R^2}{(r_V+1)!^2(r_V+2)(r_0+r_1)}(2(r_V+1)u_{i+1}-v_i(r_V+2))(r_1d_0-d_1r_0).$$
and the conclusion holds as expected with $$Fut_{i+2}=\frac{\pi_R^2 r_1r_0}{2(r_V+1)!^2(r_V+2)(r_0+r_1)}(2(r_V+1)u_{i+1}-v_i(r_V+2)),$$
for $i\geq 1$.
\end{proof}

We refine Proposition \ref{mainprop} and show that the following holds.

\begin{prop}\label{proppositivity}
For $c>\max(\mu(V_i))$.
$$(Fut_1c +Fut_2 + \frac{Fut_3}{c}+\frac{Fut_4}{c^2}+...)>0.$$
In particular if the relative Donaldson--Futaki $\mathfrak{F}_{\rho_{\rm ex}}(\rho_L)$ is positive then $\mu_0-\mu_1>0$.
\end{prop}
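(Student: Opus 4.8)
The plan is to reduce Proposition~\ref{proppositivity} to the positivity of an explicit rational function of $c$ and then to establish that positivity by analysing the recursion of Lemma~\ref{S}. Let $Q(c)$ be the rational function of $c$ (with coefficients rational in the $r_i,d_i$) determined by $\mathring{\mathfrak{F}}_{\rho_{\rm ex}}(\rho_{L})=(\mu_0-\mu_1)\,Q(c)$; by Cramer's rule applied to \eqref{syst} and the formulas of Proposition~\ref{p:computation}, $Q(c)$ is indeed rational in $c$, with denominator a power of $\alpha_0$ times $\det A$, where $A$ is the matrix \eqref{matrixA}. On the admissible range $c>\max_i\mu(V_i)$ one has $\alpha_0=\tfrac{\pi_R}{r_V!}(r_Vc-d_V)>0$, since $\mu(V)=d_V/r_V=\sum_i\tfrac{r_i}{r_V}\mu(V_i)$ is a convex combination of the $\mu(V_i)$ and hence $<c$; and $\det A>0$, since $A_{ij}=\tfrac{\pi_R}{(2\pi)^n}\int_{M_0}f_if_j\,v_{g_c}$ is (a positive multiple of) the Gram matrix of the affinely independent Hamiltonians $f_2,\dots,f_\ell$ in $L^2(M_0,v_{g_c})$, positive definite by Cauchy--Schwarz exactly as in the $\ell=2$ case of Section~\ref{sum2}. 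Thus $Q$ is regular on $(\max_i\mu(V_i),+\infty)$, and by Proposition~\ref{mainprop} it tends to $+\infty$ there because $Fut_1>0$. It therefore suffices to rule out zeros of $Q$ on this interval; once this is done, $\mathring{\mathfrak{F}}_{\rho_{\rm ex}}(\rho_{L})$ has the sign of $\mu_0-\mu_1$, so positivity of $\mathfrak{F}_{\rho_{\rm ex}}(\rho_L)$ (a positive multiple of $\mathring{\mathfrak{F}}_{\rho_{\rm ex}}(\rho_L)$) forces $\mu_0-\mu_1>0$.

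To rule out zeros I would clear denominators: $\det(A)\cdot\alpha_0^{N}\cdot Q(c)$ is a polynomial $P(c)$ in $c$, and since the cleared factors are positive on the admissible range it is enough to prove $P(c)>0$ there. Since Proposition~\ref{mainprop} already gives $Q(c)>0$ for $c\gg0$, the content is to control $P$ near the boundary of the admissible range. The natural route is to solve, not merely term by term, the coupled recursion consisting of the system $(S)$ of Lemma~\ref{S} together with the first-order-in-$c$ extraction of each $a_k$ from \eqref{newsyst}: introducing generating functions $U(t)=\sum_{i\ge1}u_it^i$ and $V(t)=\sum_{i\ge1}v_it^i$ with $t=1/c$, the summed identity \eqref{linSigmas1} becomes a single functional relation between $U$ and $V$, and the individual equations \eqref{newsyst} supply the missing one; from these one recovers $\Sigma_1=\sum a_jr_j$, $\Sigma_2=\sum a_jd_j$, and hence $Q$, as explicit rational functions of $c$. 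The aim is then to exhibit $P(c)$ as a product of factors each manifestly positive for $c>\max_i\mu(V_i)$, generalizing the factorization $\Gamma_0=\Delta_c(\Delta_c+{\bf g}-1)(\Delta_c+2(c-\mu_2))\Gamma_1$ of Proposition~\ref{case2line}, where $\Delta_c=r_Vc-d_V>0$ by the above, $\Delta_c+{\bf g}-1>0$ because ${\bf g}\ge1$, and $\Delta_c+2(c-\mu_2)>0$ because $c>\mu_2$. Should a clean product form not emerge, the fallback is to bound the tail $\sum_{i\ge1}Fut_{i+2}/c^{i}$ below, using the recursion to sign‑control the combinations $2(r_V+1)u_{i+1}-(r_V+2)v_i$ that (via Lemma~\ref{betakalpha0} and the computation at the end of the proof of Proposition~\ref{mainprop}) are proportional to $Fut_{i+2}$, so that $Fut_1c+Fut_2+(\text{tail})>0$ throughout the admissible range.

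The main obstacle is precisely this last step. The recursion $(S)$ is coupled to \eqref{newsyst} through increasingly complicated symmetric functions of the slopes: already $v_1$ involves $\sum_{j\ge2}d_j^2/r_j$ and $u_2$ involves the same expression together with $\mu_{01}$, and the higher $u_i,v_i$ bring in further such invariants, so producing either a genuine closed form for $Q$ or a workable uniform lower bound for $P(c)$ on all of $(\max_i\mu(V_i),+\infty)$ — rather than only for $c\gg0$ — is the technical heart of the argument. Everything else, namely the regularity and asymptotics of $Q$, the reduction to a polynomial inequality, and the final deduction that $\mu_0>\mu_1$, is routine given the material of the preceding sections. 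Note finally that the degenerate case $\mu_0=\mu_1$ need not be covered here: in that case $\mathring{\mathfrak{F}}_{\rho_{\rm ex}}(\rho_L)$ vanishes, while the test configuration of Lemma~\ref{l:test-configuration} is not a product, which is the obstruction to relative K-polystability used in the proof of Theorem~\ref{thm:main}.
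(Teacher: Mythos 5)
There is a genuine gap: you have correctly set up the reduction but explicitly stopped short of the step that constitutes the actual content of the proposition. Your outline — sum the tail $\sum_{i\ge 1}Fut_{i+2}/c^i$ in closed form via $\Sigma_1$ and $\Sigma_2$, obtain a second exact linear relation to complement \eqref{linSigmas1}, solve for $\Sigma_1,\Sigma_2$ as rational functions of $c$, and then verify positivity on all of $(\max_i\mu(V_i),+\infty)$ — is precisely the paper's strategy, so the plan is sound. But you then write that producing a workable lower bound on the whole admissible range (rather than for $c\gg 0$) is ``the main obstacle'' and ``the technical heart of the argument,'' and you offer only two unexecuted alternatives (a hoped-for factorization generalizing the $\ell=2$ case, or a sign-control of the combinations $2(r_V+1)u_{i+1}-(r_V+2)v_i$). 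Neither is carried out, and the asymptotic information from Proposition~\ref{mainprop} alone cannot rule out zeros of $Q$ at moderate $c$. As stated, your argument proves nothing beyond what Proposition~\ref{mainprop} already gives.

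What the paper actually does to close this gap, and what your proposal is missing: after normalizing $d_V=0$, the full series is summed exactly to a positive multiple of $4r_V(r_V+1)(r_V+2)(r_Vc+{\bf g}-1)+2(r_V+1)c\Sigma_1-(r_V+2)\Sigma_2$ (equation \eqref{factorFut}); each $a_k$ is solved \emph{exactly} (not merely to first order) from \eqref{newsyst} in terms of $\Sigma_1,\Sigma_2$, and summing against $d_k$ yields a second exact linear relation, hence the explicit formula \eqref{sigma2} for $\Sigma_2$ with denominator $\Delta_{\Sigma_2}$; the target quantity is then rewritten as \eqref{form1}, a manifestly positive term plus $(-\Sigma_2)$ times a positive factor, so everything reduces to $\Delta_{\Sigma_2}>0$; and that inequality is established by elementary but delicate estimates split into the cases $d_0+d_1\ge 0$ and $d_0+d_1\le 0$ (Lemmas~\ref{lemma11} and~\ref{lemma12}), using $c>\mu(V_i)$, ${\bf g}\ge 1$, and bounds of the type $d_k\ge -d_V^+$ with $c>d_V^+/(r_V-1)$. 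No clean product factorization of the kind you hoped for emerges in general; the positivity is obtained by these case-by-case bounds. Until you supply an argument of this kind (or an equivalent one), the proposition is not proved.
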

\begin{proof}
 First we remark that from the proof of Proposition \ref{mainprop}, 
 \begin{align*}
  \sum_{i\geq 1}\frac{Fut_{i+2}}{c^{i}}=& \frac{\pi_R^2{r_1r_0}}{2(r_V+1)!^2(r_V+2)(r_1+r_0)}[2(r_V+1)c\sum_{i\geq 1}\frac{ u_{i+1}}{c^{i+1}}-(r_V+2)\sum_{i\geq 1}\frac{ v_{i}}{c^{i}}],   \\
  =& \frac{\pi_R^2{r_1r_0}}{2(r_V+1)!^2(r_V+2)(r_1+r_0)}[2(r_V+1)c\Sigma_1-(r_V+2)\Sigma_2]\\
  &-u_1 \frac{\pi_R^2{r_1r_0}}{r_V!(r_V+1)!(r_V+2)(r_1+r_0)}.    
 \end{align*}
 So we deduce using the expressions of $Fut_1,Fut_2$ computed in the previous proposition
  \begin{align}
\sum_{i\geq -1}\frac{Fut_{i+2}}{c^{i}}=&\frac{r_1r_0\pi_R^2}{2(r_1+r_0)(r_V+1)!^2(r_V+2)} \nonumber \\
 &\pushright{ \times[4r_V(r_V+1)(r_V+2)(r_Vc+{\bf g} -1)+2(r_V+1)c\Sigma_1-(r_V+2)\Sigma_2]}. \label{factorFut}                                      
  \end{align}
By the assumption $d_V=0$,  we  have $c>\frac{d_V}{r_V}=0$,  so that in order to prove the proposition, we only need to show the positivity of $$[4r_V(r_V+1)(r_V+2)(r_Vc+{\bf g} -1)+2(r_V+1)c\Sigma_1-(r_V+2)\Sigma_2].$$
 We have a linear relationship between $\Sigma_1$ and $\Sigma_2$ thanks to \eqref{linSigmas1}. We seek for a second linear relationship. 
 Firstly, from \eqref{newsyst} and Lemmas \ref{betakalpha0} and \ref{gamma}, we get using $d_V=0$,
 \begin{align*}
 \gamma'_ka_k =&\frac{\pi_R^2}{(r_V+1)!^2(r_V+2)}\\
               &\times \left[-4(r_V+1)(r_V+2)r_Vd_k(r_Vc + {\bf g} -1)\right. \\
               &\hspace{1cm} -(-r_k(r_V+1)(r_V+2)c^2+2d_k(1+r_V)c)\Sigma_1 \\
               &\hspace{1cm}\left.-(2r_k(1+r_V)c-(r_V+2)d_k)\Sigma_2\right].
 \end{align*}
On another side, from Lemma \ref{gamma}, 
\begin{align*}
\gamma'_k  
=&\frac{\pi_R^2}{(r_V+1)! (r_V+2)!}(r_V+1)r_k[r_V(r_V+2)c^2-2r_V\mu_kc].
\end{align*}
From these two previous equations, we obtain 
  \begin{align*}
  &a_k=\frac{-4(r_V+2)d_k\left(r_Vc +{\bf g} -1\right)}{c[r_k(r_V+2)c-2d_k] } +\frac{\Sigma_1}{r_V}- \frac{(2r_k(1+r_V)c-(r_V+2)d_k)\Sigma_2}{(r_V+1)r_Vc[r_k(r_V+2)c-2d_k]}.
 \end{align*}
We multiply this expression by $d_k$ and then sum. This provides
  \begin{align*}
   \Sigma_2=&-\sum_{k=2}^{\ell}\frac{4(r_V+2)d_k^2\left(r_Vc +{\bf g} -1\right)}{c[r_k(r_V+2)c-2d_k] }  +\frac{(d_V-d_0-d_1)}{r_V}\Sigma_1 \\
   &-\left( \sum_{k=2}^{\ell}\frac{d_k(2r_k(1+r_V)c-(r_V+2)d_k)}{(r_V+1)r_Vc[r_k(r_V+2)c-2d_k]}\right)\Sigma_2,
  \end{align*}
  i.e a second linear relationship between the unknowns $(\Sigma_1,\Sigma_2)$.
Hence, using \eqref{linSigmas1}, we can identify $\Sigma_2$ as
   \begin{align}\label{sigma2}
   &\Sigma_2=-\frac{\sum_{k=2}^{\ell}\frac{4(r_V+2)d_k^2\left(r_Vc +{\bf g} -1\right)}{c[r_k(r_V+2)c-2d_k] }+\frac{4(r_V+2)(d_0+d_1)\mu_{01}[cr_V+{\bf g} -1]}{ [(r_V+2)c^2-2\mu_{01}c]}}{\Delta_{\Sigma_2} },
\end{align}
where $\Delta_{\Sigma_2}$ is given by 
{
\begin{align*}
\Delta_{\Sigma_2}=& \sum_{k=2}^{\ell}\frac{d_k(2r_k(1+r_V)c-(r_V+2)d_k)}{(r_V+1)r_Vc[r_k(r_V+2)c-2d_k]} \\
&+\frac{r_V(r_V+2)(r_V+1)c^2-2(r_V+1)(r_V-r_0-r_1)\mu_{01}c-(r_V+2)(d_1+d_0)\mu_{01}}{r_V(r_V+1)c[(r_V+2)c-2\mu_{01}]}.
\end{align*}}
As we said previously, we are looking for the sign of 
\begin{align}
 [r_V(r_V+1)&(r_V+2)(r_Vc+2({\bf g}-1))+2(r_V+1)c\Sigma_1-(r_V+2)\Sigma_2] \nonumber\\
=&4r_V(r_V+1)(r_V+2)^2\frac{(cr_V+{\bf g} -1)c}{ [(r_V+2)c-2\mu_{01}]}\nonumber\\
&+(-\Sigma_2)\left(\frac{cr_V^2}{[(r_V+2)c-2\mu_{01}]}\right).  \label{form1}
\end{align}
We will show this is positive. Actually, the first term is positive because $c>0$ and $(r_0+r_1)c>(d_1+d_0)$. As ${\bf g}\geq 1$, and $r_kc>d_k$, the only difficulty is to show that $\Delta_{\Sigma_2}$ is positive, which implies easily $-\Sigma_2>0$. The proof of the proposition is complete with Lemmas \ref{lemma11} and \ref{lemma12} which exhaust all possible cases.
\end{proof}

\begin{lemma}\label{lemma11}
 Assume as above that $d_V=0$ and also that  $(d_1+d_0)\geq 0$. Then 
 $\Delta_{\Sigma_2}>0$.
\end{lemma}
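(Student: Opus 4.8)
The plan is to massage the unwieldy expression for $\Delta_{\Sigma_2}$ into the shape ``$1$ minus a quantity that is visibly $<1$''. First I would record the elementary positivity facts. Since $d_V=0=\sum_k r_k\mu_k$, at least one slope $\mu_k$ is $\geq 0$, so the polarisation inequality $c>\max_i\mu(V_i)$ forces $c>0$; moreover $c>\mu_k$ for every $k$, and because $\mu_{01}=(r_0\mu_0+r_1\mu_1)/(r_0+r_1)$ is a convex combination of $\mu_0,\mu_1$ we also get $c>\mu_{01}$. Consequently every denominator occurring in $\Delta_{\Sigma_2}$ is positive: indeed $r_k(r_V+2)c-2d_k=r_k\big((r_V+2)c-2\mu_k\big)$, and whenever $\mu<c$ one has $(r_V+2)c-2\mu>(r_V+2-2)c=r_Vc>0$; similarly for the denominator $(r_V+2)c-2\mu_{01}$.

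\textbf{The collapse.} Next I would introduce the rational function $\phi(x)=\dfrac{x\big(2(r_V+1)-(r_V+2)x\big)}{(r_V+2)-2x}$, finite for $x<1$, and record by polynomial division the identity $\phi(x)=\tfrac{r_V+2}{2}\,x-\tfrac{r_V^2}{2}\cdot\dfrac{x}{(r_V+2)-2x}$. A direct computation — substituting $d_k=r_k\mu_k$, $d_0+d_1=(r_0+r_1)\mu_{01}$, and using $d_V=0$ to verify that the numerator of the $\mu_{01}$-term equals $(d_0+d_1)\big(2(r_V+1)c-(r_V+2)\mu_{01}\big)+r_V(r_V+1)c\big((r_V+2)c-2\mu_{01}\big)$ — shows that the $k$-th term of the sum in $\Delta_{\Sigma_2}$ is $\tfrac{r_k}{r_V(r_V+1)}\phi(\mu_k/c)$, while the $\mu_{01}$-term is $1+\tfrac{r_0+r_1}{r_V(r_V+1)}\phi(\mu_{01}/c)$. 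Amalgamating $V_0,V_1$ into a single block of rank $r_0+r_1$ and slope $\mu_{01}=\mu(U_0)$, the blocks become exactly $U_0,U_1,\dots,U_{\ell-1}$, with ranks $\rho_i$ and slopes $\sigma_i$ satisfying $\sum_i\rho_i=r_V$ and, crucially, $\sum_i\rho_i\sigma_i=d_V=0$. Plugging in the first formula for $\phi$, the $x$-linear part cancels thanks to $\sum_i\rho_i\sigma_i=0$, and one is left with $\Delta_{\Sigma_2}=1-\dfrac{r_V}{2(r_V+1)}\sum_i\dfrac{\rho_i\sigma_i}{(r_V+2)c-2\sigma_i}$.

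\textbf{The estimate.} It then remains to show $\sum_i\frac{\rho_i\sigma_i}{(r_V+2)c-2\sigma_i}<\frac{2(r_V+1)}{r_V}$. Using $\frac{\sigma_i}{(r_V+2)c-2\sigma_i}=\frac{(r_V+2)c}{2\big((r_V+2)c-2\sigma_i\big)}-\frac12$ together with $\sum_i\rho_i\sigma_i=0$ and $\sum_i\rho_i=r_V$, this inequality is equivalent — after clearing the positive factor $(r_V+2)c/2$ and simplifying $\frac{2(r_V+1)}{r_V}+\frac{r_V}{2}=\frac{(r_V+2)^2}{2r_V}$ — to $\sum_i\frac{\rho_i}{(r_V+2)-2\sigma_i/c}<\frac{r_V+2}{r_V}$. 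Now for every block $\sigma_i<c$, hence $(r_V+2)-2\sigma_i/c>r_V>0$, so $\frac{\rho_i}{(r_V+2)-2\sigma_i/c}<\frac{\rho_i}{r_V}$ and the left-hand side is $<\frac1{r_V}\sum_i\rho_i=1<\frac{r_V+2}{r_V}$; this gives $\Delta_{\Sigma_2}>0$. The main obstacle is purely organisational: recognising that, after the substitution $x=\mu/c$ and the merging of the $V_0,V_1$ block, the bulky expression for $\Delta_{\Sigma_2}$ collapses to $1-(\cdots)$ with $(\cdots)<1$; once this is in place the estimate is immediate. Note that this argument uses neither ${\bf g}\geq1$ nor the hypothesis $d_0+d_1\geq0$ — the latter only serves to make the positivity of the $\mu_{01}$-term transparent — so the same computation also yields Lemma~\ref{lemma12}.
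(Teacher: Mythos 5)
Your argument is correct, and I checked the two key identities: the numerator of the $\mu_{01}$-term does equal $(d_0+d_1)\bigl(2(r_V+1)c-(r_V+2)\mu_{01}\bigr)+r_V(r_V+1)c\bigl((r_V+2)c-2\mu_{01}\bigr)$, and with $\phi(x)=\tfrac{r_V+2}{2}x-\tfrac{r_V^2}{2}\tfrac{x}{(r_V+2)-2x}$ the linear part drops out by $\sum_i\rho_i\sigma_i=d_V=0$, giving $\Delta_{\Sigma_2}=1-\tfrac{r_V}{2(r_V+1)}\sum_i\tfrac{\rho_i\sigma_i}{(r_V+2)c-2\sigma_i}$; the final bound then follows from $\sigma_i<c$ and $c>0$ exactly as you say. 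This is, however, a genuinely different route from the paper's. The paper keeps $\Delta_{\Sigma_2}$ as a sum over $k$ by redistributing the $\mu_{01}$-term across the $k$-summands (which is where $d_V=0$ enters for them, via $\sum_{k\geq 2}\bigl(2r_k(1+r_V)c-(r_V+2)d_k\bigr)=2(r_V+1)(r_V-r_0-r_1)c+(r_V+2)(d_0+d_1)$), and then proves termwise positivity of the resulting bracket $B_{\Delta_{\Sigma_2}}$ by estimating its numerator line by line; this forces a case split on the sign of $d_0+d_1$ (hence the separate Lemma~\ref{lemma12}) and invokes auxiliary bounds such as $c>d_V^+/(r_V-1)$. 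Your collapse to ``$1$ minus a convex-type sum'' buys a single uniform estimate needing only $c>\max_i\mu(V_i)$ and $d_V=0$, so it subsumes both Lemma~\ref{lemma11} and Lemma~\ref{lemma12} and makes transparent that the hypothesis on the sign of $d_0+d_1$ is not actually needed. One minor quibble: the identity for the $\mu_{01}$-numerator uses only $d_0+d_1=(r_0+r_1)\mu_{01}$, not $d_V=0$ as you state; $d_V=0$ is used only in the cancellation of the $x$-linear part of $\phi$.
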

\begin{proof}
We write
{
\begin{align*}
 \Delta_{\Sigma_2}  
 =&\sum_{k=2}^{\ell}\frac{(2r_k(1+r_V)c-(r_V+2)d_k)}{(r_V+1)r_Vc} B_{\Delta_{\Sigma_2}},
 \end{align*}
} where one has denoted
{
 \begin{align}
 B_{\Delta_{\Sigma_2}}=&\frac{d_k}{r_k(r_V+2)c-2d_k}\nonumber \\
 &+\frac{r_V(r_V+2)(r_V+1)c^2-2(r_V+1)(r_V-r_0-r_1)\mu_{01}c-(r_V+2)\mu_{01}(d_1+d_0)}{[2(r_V-r_1-r_0)(r_V+1)c+(r_V+2)(d_1+d_0)][(r_V+2)c-2\mu_{01}]}. \label{bracket}
 \end{align}

 }The factor term ${(2r_k(1+r_V)c-(r_V+2)d_k)}$ is positive. Next, we are interested in the term $B_{\Delta_{\Sigma_2}}$. Its denominator is positive as $d_1+d_0\geq 0$ and $r_kc>d_k$. 
Its numerator, after gathering the 2 terms, is given (up to a factor $(r_V+2)c$) by 
\begin{align}\label{numerBD2b}
 & 2(r_V+1)(r_Vr_k(c -\mu_{01}) +(r_kc-d_k)(r_0+r_1))c  +(c -\mu_{01})(d_0+d_1)(r_V+2)r_k  \\
& +(r_V+1)r_k[r_V^2- 2(r_0+r_1)]c^2 +  (d_1+d_0)[r_Vr_kc +(r_V+2)d_k].\nonumber
\end{align}
The first line is obviously positive. We only need to check that the last line is non negative. To do so, we write 
\begin{align*}
& (r_V+1)r_k[r_V^2- 2(r_0+r_1)]c^2 +  (d_1+d_0)[r_Vr_kc +(r_V+2)d_k]\\
& > (r_V+1)r_k[r_V^2- 2(r_0+r_1)]\mu_{01} c+  (d_1+d_0)[r_Vr_kc +(r_V+2)d_k],\\
& =(r_V+2)(d_1+d_0)\left( [(r_V+1)(r_V\frac{r_V}{r_1+r_0}-2) + {r_V}]\frac{r_kc}{r_V+2} +d_k\right).
\end{align*}
Now as $d_V=0$, $$d_k \geq -d_V^+, \,\text{ with }\,\, d_V^+= \sum_{j=0, d_j\geq 0}^\ell d_j\geq 0.$$ From the assumption on $c$, $c>\frac{d_V^+}{\sum_{j=0, d_j\geq 0} r_j}\geq \frac{d_V^+}{r_V-1}$ as there will be at least one subbundle $V_i \subset V$ of negative degree (otherwise $d_0=d_1 =\cdots =d_k=d_V^+=0$, the last line vanishes and we are done). We get since $r_k\geq 1$,
\begin{align*}
 &\left( [(r_V+1)(r_V\frac{r_V}{r_1+r_0}-2) + {r_V}]\frac{r_kc}{r_V+2} +d_k\right)\\
 &\hspace{2cm}\geq [(r_V+1)(r_V\frac{r_V}{r_V-1}-2) + r_V]\frac{d_V^+}{(r_V-1)(r_V+2)} -d_V^+,\\
 &\hspace{2cm}\geq\frac{2r_V}{(r_V+2)(r_V-1)^2}d_V^+\geq 0.\\
\end{align*}
\end{proof}

\begin{lemma}\label{lemma12}
 Assume as above that $d_V=0$ and also that $(d_1+d_0)\leq 0$. Then 
 $\Delta_{\Sigma_2}>0$.
\end{lemma}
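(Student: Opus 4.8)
The plan is to follow the proof of Lemma~\ref{lemma11} almost verbatim, the only genuinely new regime being $d_0+d_1<0$: if $d_0+d_1=0$ then $\mu_{01}=0$ and the statement is already contained in Lemma~\ref{lemma11}, while if $d_0=\cdots=d_\ell=0$ there is nothing to prove, so we may assume that some $d_j$ is negative and (since $d_V=0$) that $\sum_{k\ge 2}d_k=|d_0+d_1|>0$, in particular $\ell\ge 2$ and $r_V\ge r_0+r_1+1$. As in Lemma~\ref{lemma11} I would write
\[
\Delta_{\Sigma_2}=\sum_{k=2}^{\ell}\frac{2r_k(1+r_V)c-(r_V+2)d_k}{(r_V+1)r_Vc}\,B_{\Delta_{\Sigma_2}},
\]
with $B_{\Delta_{\Sigma_2}}$ as in \eqref{bracket}. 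The prefactors $2r_k(1+r_V)c-(r_V+2)d_k$, $r_k(r_V+2)c-2d_k$ and $(r_V+2)c-2\mu_{01}$ are all positive, the last one because now $\mu_{01}\le 0<c$. Positivity of the remaining denominator $S:=2(r_V-r_1-r_0)(r_V+1)c+(r_V+2)(d_1+d_0)$ of the second summand of $B_{\Delta_{\Sigma_2}}$ is where the hypothesis must be used: some $V_k$ with $k\ge 2$ has slope at least $(d_2+\cdots+d_\ell)/(r_2+\cdots+r_\ell)=|d_0+d_1|/(r_V-r_0-r_1)$, so the constraint $c>\max_i\mu(V_i)$ gives $(r_V-r_0-r_1)c>|d_0+d_1|$, whence $S>r_V|d_0+d_1|\ge 0$.

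It then remains to prove that the numerator of $B_{\Delta_{\Sigma_2}}$, which after clearing denominators equals --- up to the positive factor $(r_V+2)c$ --- the expression in \eqref{numerBD2b} (an algebraic identity, valid for either sign of $d_0+d_1$), is positive. Here I would regroup \eqref{numerBD2b} so as to isolate the manifestly positive block
\[
2(r_V+1)\bigl(r_Vr_k(c-\mu_{01})+(r_kc-d_k)(r_0+r_1)\bigr)c+(r_V+1)r_k\bigl[r_V^2-2(r_0+r_1)\bigr]c^2,
\]
which is positive because $c>\mu_{01}$, $c>\mu_k$ and $r_V^2>2(r_0+r_1)$. The leftover is exactly $(d_0+d_1)\bigl[2(r_V+1)r_kc-(r_V+2)r_k\mu_{01}+(r_V+2)d_k\bigr]$, which is the ``bad'' term since $d_0+d_1<0$. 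To dominate it I would invoke, as in the concluding estimate of Lemma~\ref{lemma11}, the bounds $|d_0+d_1|<(r_V-r_0-r_1)c$ (just established), $|\mu_{01}|=|d_0+d_1|/(r_0+r_1)$, and $|d_k|\le d_V^+:=\sum_{d_j\ge 0}d_j$ together with $c>d_V^+/(r_V-1)$ (again there is a summand of negative degree, else all $d_j=0$), and split the coefficient $(r_V+1)r_k[r_V^2-2(r_0+r_1)]$ of the $c^2$-block so that each part absorbs one of the pieces of the bad term.

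I expect the last step to be the only real obstacle. In contrast with Lemma~\ref{lemma11}, where a single ``last line'' had to be signed, here the three correction terms --- the $(c-\mu_{01})(d_0+d_1)$ piece, the $(d_0+d_1)r_Vr_kc$ piece and the constant $(d_0+d_1)(r_V+2)d_k$ piece --- carry the wrong sign at the same time, so one must partition the positive $c^2$-coefficient into (at least) three parts and verify, with the explicit lower bounds $c>d_V^+/(r_V-1)$ and $c>|d_0+d_1|/(r_V-r_0-r_1)$, that each part dominates its designated correction; the inequalities involved are elementary but, as elsewhere in this section, go through only after a somewhat lengthy brute-force computation.
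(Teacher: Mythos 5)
Your setup is sound and matches the paper's up to the decisive step: the reduction to $d_0+d_1<0$, the positivity of the prefactors and of the denominator $S=2(r_V-r_1-r_0)(r_V+1)c+(r_V+2)(d_0+d_1)$ via $(r_V-r_0-r_1)c>|d_0+d_1|$ (this is exactly the paper's inequality \eqref{ineq01}, stated there with $d_V^+$ in place of $|d_0+d_1|$), and the identification of the leftover ``bad'' term $(d_0+d_1)\bigl[2(r_V+1)r_kc-(r_V+2)r_k\mu_{01}+(r_V+2)d_k\bigr]$ are all correct. But the proof stops exactly where the lemma's content lies: you never verify that the positive block dominates this bad term. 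You propose to partition the coefficient $(r_V+1)r_k[r_V^2-2(r_0+r_1)]$ of the $c^2$-block into three pieces and check three inequalities, and then explicitly defer that check as ``a somewhat lengthy brute-force computation.'' A blind proof that ends with ``I expect this to go through'' at the one step that is not formal bookkeeping is incomplete; as written, the lemma is not proved.

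For comparison, the paper avoids your three-way absorption entirely by a different regrouping of \eqref{numerBD2b}: it collects the two $\mu_{01}$-carrying pieces into the single expression $-\mu_{01}r_k\bigl[2(r_V+1)(r_V-r_0-r_1)c+(d_0+d_1)(r_V+2)\bigr]$, which is manifestly nonnegative because $-\mu_{01}\ge 0$ and the bracket is the quantity already shown positive in \eqref{ineq01}; the remaining piece $(r_V+2)(d_0+d_1)d_k$ is then paired with $r_k(r_V+1)r_V^2c^2$ alone and handled in one line via $d_k\le d_V^+$, $d_0+d_1\ge -d_V^+$ and $r_Vc>d_V^+$ (the case $d_k\le 0$ being trivial). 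So only one of your three ``corrections'' actually needs to be absorbed, and the auxiliary inequality \eqref{ineq01} does double duty. If you want to salvage your route, you should either carry out the three estimates explicitly (they appear to close, but with little room to spare when $r_0+r_1=r_V-1$, so the verification cannot be waved away), or adopt the paper's factorization, which eliminates the need for them.
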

\begin{proof}
 This is similar to the previous lemma. First, as $d_1+d_0\leq 0$, $c>\frac{d_V^+}{r_V-r_1-r_0}$ so,
\begin{align}\label{ineq01}
2(r_V-r_1-r_0)(r_V+1)c+(r_V+2)(d_1+d_0) > 2(r_V+1)d_V^+ -(r_V+2)d_V^+>0.
\end{align}
We consider the numerator of the $B_{\Delta_{\Sigma_2}}$ term given by \eqref{bracket}. Then \eqref{numerBD2b} can be rewritten 
\begin{align*}
 &r_k(r_V+1)[2(r_V-r_0-r_1)]c^2+2c(r_V+1)(r_1+r_0)(r_kc -d_k ) \\
 &-2\mu_{01}(r_V+1)r_k (  r_V -(r_0+r_1))c- r_k(d_0+d_1)(r_V+2)\mu_{01} \\ 
 &+(r_V+2)(d_1+d_0)d_k +r_k(r_V+1)r_V^2c^2.
\end{align*}
The first  terms is positive. The 2nd term is
\begin{align*}
-\mu_{01}r_k[2(r_V+1)(r_V-r_0-r_1)c +(d_1+d_0)(r_V+2)],
\end{align*}
which is positive from \eqref{ineq01}. The 3rd term is positive if $d_k\leq 0$. Let us assume $d_k>0$. Then, using the properties of $c$,
\begin{align*}
 (r_V+2)(d_1+d_0)d_k +(r_V+1)r_V(r_Vc)(r_k c) &\geq -(r_V+2)d_kd_V^+  + r_V(r_V+1)d_k d_V^+,\\
 &\geq 0.
\end{align*}
This concludes the proof.
 
\end{proof}

We obtain now the proof of Theorem \ref{thm:main}.

\begin{proof}[Proof of Theorem \ref{thm:main}]\label{proofmain} As we explained at beginning of Section~\ref{mainsect}, we only need to show that
the relative K-polystability of a K\"ahler class (corresponding to some value of the constant $c$) implies that each indecomposable factor $U_i$ of $E$  is a stable bundle. The test configuration $\mathcal M$ we defined is normal and not a product test configuration, so we get from Proposition~\ref{proppositivity}
$$\mu_0- \mu_1>0.$$ 
This means that  $V_1 = L$ does not destabilize $U_0$, i.e. $U_0$ is stable. The same reasoning by permuting  of the  $U_k$  concludes the proof. \end{proof}

\subsection{Proof of Corollary \ref{cor1}}\label{sectcor}

\begin{proof} As we have already mentioned in the introduction,  one direction of the relative Yau--Tian--Donaldson Conjecture (see Conjecture~\ref{c:YTD}),  namely that the existence of an extremal K\"ahler metric in $2\pi c_1(\cL)$ implies the relative K-polystability of $(M, \cL)$  is established (for any polarized variety) in \cite{gabor-stoppa}. We shall thus discuss  bellow the other direction of the conjecture in each of the cases (1)--(4) listed in the Corollary~\ref{cor1}.

\smallskip
(1)  Suppose $s=\ell =1$. Then  the  automorphisms group of $(M=\PP(E), \cL)$ has rank $0$ (see the beginning of Section~\ref{mainsect}) unless $E$ has rank $1$ and $M=\C\PP^1$. Thus, the relative Donaldson--Futaki invariant of $(M, \cL)$ coincides with the usual Donaldson--Futaki invariant and it follows from   \cite[Theorem~5.13]{RT}  (see also Section~\ref{s:ell=1}) that $E$ must be stable. By  the Narasimhan--Seshadri Theorem~\cite{NS},  $M=\PP(E)$ admits a CSC K\"ahler metric in each K\"ahler class, in particular in $c_1(\cL)$. 

Suppose $s=\ell=2$, i.e. $E=U_1\oplus U_2$. If $(M=\PP(E), \cL)$ is relative K-polystable then,  by Theorem~\ref{thm:main} ,  $U_1$ and $U_2$ must be stable.   In this case,  \cite[Theorem 1]{ACGT-0} shows that the existence of an extremal metric  in $\Omega= 2\pi c_1(\cL)$ is equivalent to the positivity of the extremal polynomial $F_{\Omega}(z)$ (see \cite[Definition 1]{ACGT-0}) over the interval $(-1,1)$. Furthermore, by \cite[Theorem 2]{ACGT-0}, the latter is satisfied  provided that the {\it K\"ahler} relative K-polystability of $(M, \cL)$ holds (see \cite{dervan-2} for a precise definition) whereas the relative K-polystability of $(M, \cL)$ insures only that $F_{\Omega}(z)>0$ on $(-1,1) \cap \Q.$ In the case when the base of $\PP(E)$ is a curve,  the explicit form  of the extremal polynomial $F_{\Omega}(z)$ is given at the beginning of Section 3.2 of \cite{ACGT-0}: it follows that for the rational class $\Omega=2\pi c_1(\cL)$ (i.e.  an admissible K\"ahler class corresponding to a rational parameter $x$) the constant  $c$ is also rational and one obtains that $F_{\Omega}(z)>0$ on $(-1,1)$ if and only if $F_{\Omega}(z)>0$ on $(-1,1)\cap \mathbb{Q}$. Consequently, one can improve slightly \cite[Theorem 2]{ACGT-0} if the base is a complex curve: the relative K-polystability of $(M, \cL)$ implies the positivity of the extremal polynomial $P_{\Omega}(z)$ and thus the existence of an extremal metric in $\Omega= 2\pi c_1(\cL)$.

\smallskip
(2) By the Narasimhan--Seshadri Theorem~\cite{NS},  $M=\PP(E)$ admits a CSC K\"ahler metric in each K\"ahler class, in particular in $c_1(\cL)$.

\smallskip
(3) By Theorem~\ref{thm:main},  in this case  $(M=\PP(E), \cL)$ cannot be relative K-polystable.

\smallskip
(4)  If $(M=\PP(E), \cL= \cL_{q,p})$ is relative K-polystable, by Theorem~\ref{thm:main},  $E=\bigoplus_{s=1}^{k} U_s$ with $U_i$ stable. In this case \cite[Theorem~3]{ACGT} or  the main result of \cite{Bro} implies  that there exists $c_0>0$ such that  {\it any}  K\"ahler class $2\pi c_1(\cL_{q,p})$ with $p/q>c_0$ is extremal (and  hence also relatively $K$-polystable). \end{proof}

 \section{Appendix}\label{appendix}
 
 \subsection{Integration over the simplex}
 
 \begin{lemma}\label{techn}
Let us fix the integers $m_k\geq 0$. We have
 \begin{equation*}
  \int_{\Delta} \prod_{k=0}^\ell (L_k(x))^{m_k}dv = \frac{m_0 ! m_1 ! ... m_\ell!}{(m_0+...+m_\ell+\ell)!}.
 \end{equation*}
\end{lemma}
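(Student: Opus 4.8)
The plan is to argue by induction on $\ell$, peeling off one coordinate at a time and reducing everything to the elementary Beta integral $\int_0^1 s^a(1-s)^b\,ds=\frac{a!\,b!}{(a+b+1)!}$ for integers $a,b\ge 0$. That identity is itself a one-line induction (on $b$, say, via integration by parts $\int_0^1 s^a(1-s)^b\,ds=\frac{b}{a+1}\int_0^1 s^{a+1}(1-s)^{b-1}\,ds$), or simply the classical value $B(a+1,b+1)$. The base case $\ell=1$ of the lemma is precisely this identity, since then $\Delta=[0,1]$, $L_1(x)=x$, $L_0(x)=1-x$.

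For the inductive step, fix $\ell\ge 2$, write $x=(x_1,\dots,x_\ell)$, and integrate first over the variable $x_\ell$. With $(x_1,\dots,x_{\ell-1})$ ranging over the $(\ell-1)$-dimensional simplex $\Delta'=\{x_j\ge 0,\ \sum_{j=1}^{\ell-1}x_j\le 1\}$, set $s:=1-\sum_{j=1}^{\ell-1}x_j$; then $x_\ell$ runs over $[0,s]$ with $L_0(x)=s-x_\ell$, while $L_j(x)=x_j$ is unchanged for $1\le j\le \ell-1$. The substitution $x_\ell=s\tau$ gives
$$\int_0^s x_\ell^{m_\ell}(s-x_\ell)^{m_0}\,dx_\ell=s^{\,m_0+m_\ell+1}\int_0^1\tau^{m_\ell}(1-\tau)^{m_0}\,d\tau=s^{\,m_0+m_\ell+1}\,\frac{m_0!\,m_\ell!}{(m_0+m_\ell+1)!},$$
so the full integral equals $\frac{m_0!\,m_\ell!}{(m_0+m_\ell+1)!}\int_{\Delta'}s^{\,m_0+m_\ell+1}\prod_{k=1}^{\ell-1}x_k^{m_k}\,dv'$. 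Since $s$ is exactly the function ``$L_0$'' for the reduced simplex $\Delta'$, this last integral is an instance of the lemma in dimension $\ell-1$ with exponents $(m_0+m_\ell+1,m_1,\dots,m_{\ell-1})$, hence by the inductive hypothesis equals $\frac{(m_0+m_\ell+1)!\,m_1!\cdots m_{\ell-1}!}{(m_0+\cdots+m_\ell+\ell)!}$. Multiplying the two factors, the $(m_0+m_\ell+1)!$ cancels and we are left with $\frac{m_0!\,m_1!\cdots m_\ell!}{(m_0+\cdots+m_\ell+\ell)!}$, as claimed.

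There is no real obstacle here; the only thing to watch is the bookkeeping in the reduction, namely that $L_0$ of the ambient simplex splits as $s-x_\ell$ with $s$ assuming the role of $L_0$ on $\Delta'$, so the inductive hypothesis genuinely applies with the single exponent $m_0$ shifted to $m_0+m_\ell+1$. For completeness one could instead give the Gamma-function proof: evaluate $\int_{(\R_{\ge 0})^{\ell+1}}\prod_{k=0}^\ell y_k^{m_k}e^{-\sum_k y_k}\,dy=\prod_{k=0}^\ell m_k!$ in two ways — directly, and after the change of variables $y_k=tL_k$ with $t=\sum_k y_k>0$ and $(L_0,\dots,L_\ell)$ on the unit simplex, whose Jacobian is $t^\ell$ — which factors the left-hand side as $\bigl(\int_0^\infty t^{(\sum_k m_k)+\ell}e^{-t}\,dt\bigr)\bigl(\int_\Delta\prod_k L_k^{m_k}\,dv\bigr)$; since the $t$-integral is $(\sum_k m_k+\ell)!$, the formula follows at once.
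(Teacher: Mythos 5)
Your proof is correct and follows essentially the same route as the paper: both peel off one simplex coordinate at a time via the scaled Beta integral $\int_0^s y^{m}(s-y)^{n}\,dy=s^{m+n+1}B(m+1,n+1)$, the only cosmetic difference being that you package the iteration as a formal induction on $\ell$ (and append an optional Gamma-function alternative) while the paper writes out the successive integrations explicitly and multiplies the resulting Beta factors at the end.
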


\begin{proof}
 This is elementary but we include a proof as we could not find a reference in the literature. It is not difficult to see that 
 $$\frac{1}{T^{m+n+1}}\int_0^T y^m (T-y)^n dy = \int_0^1 x^m (1-x)^n dx=B(m+1,n+1),$$
 where $B(.,.)$ is the standard Beta function (also called the Euler integral of the first kind).
 Now, by integrating out one variable at each step, we obtain
 {\small
 \begin{align*}
  \int_{\Delta}& \prod_{k=0}^\ell (L_k(x))^{m_k}dv\\
  &=\int_0^1 \int_0^{1-x_1}...\int_0^{1-x_1-...-x_{\ell-1}}x_1^{m_1}...x_\ell^{m_\ell}(1-x_1...-x_\ell)^{m_0}dx_{\ell}...dx_2dx_1,\\
  &=\int_0^1 x_1^{m_1}\int_0^{1-x_1}...\int_0^{1-x_1-...-x_{\ell-1}}x_\ell^{m_\ell}(1-x_1...-x_\ell)^{m_0}dx_{\ell}...dx_2dx_1,\\
  &=\int_0^{1} x_1^{m_1}\int_0^{1-x_1} x_2^{m_2}...\int_0^{1-x_1-...-x_{\ell-2}}\hspace{-0.1cm} B(m_0+1,m_\ell+1)(x_{\ell-1}^{m_{\ell-1}}(1-x_1-...-x_{\ell-1})^{m_0+m_{\ell}+1}),\\
  &=B(m_0+1,m_\ell+1)B(m_{\ell-1}+1,m_0+m_\ell+2)\\
  &\hspace{1cm}\times \int_0^{1} x_1^{m_1} ... \int_0^{1-x_1-...-x_{\ell-3}} (x_{\ell-2}^{m_{\ell-2}}(1-x_1-...-x_{\ell-2})^{m_0+m_\ell+m_{\ell-1}+2}),
 \end{align*}
 }
and so on, till we get
 \begin{align*}
  \int_{\Delta}& \prod_{k=0}^\ell (L_k(x))^{m_k}dv\\
  &=B(m_0+1,m_\ell+1)B(m_0+m_\ell+2,m_{\ell-1}+1)...\\
  & \hspace{1cm}\times B(m_2+1,m_0+m_\ell+...+m_3+\ell-1)B(m_1+1,m_0+m_\ell+...+m_2+\ell),\\
  &=\frac{m_0!...m_\ell! }{(m_0+...+m_\ell+\ell)!},
 \end{align*}
 where for the last step we have used the classical relationship between the Beta and the Gamma function.
\end{proof}

We need the following lemma to treat the case of ranks equal to $1$.\begin{center}
 
\begin{lemma}\label{kappa}
The following relations hold:
\begin{align*}
\int_{\partial \Delta}p_c(x)d\sigma&=\frac{\pi_R}{(r_V-1)!}\left((r_V-1)\kappa c-\sum_{k=0}^\ell d_k\kappa_k\right),\\
\int_{\partial \Delta}x_j p_c(x)d\sigma&=\frac{\pi_R}{r_V!}\left(r_jr_V\kappa_j c- r_j\sum_{k=0}^\ell d_k\kappa_{k,j}-\kappa_jd_j\right).
\end{align*}
\end{lemma}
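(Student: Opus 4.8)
The plan is to evaluate both boundary integrals facet by facet and to reduce each piece to Lemma~\ref{techn} applied one dimension lower. Writing $\partial\Delta=\bigcup_{m=0}^{\ell}F_m$ with $F_m=\{x\in\Delta:L_m(x)=0\}$, and noting that the codimension‑two faces are $d\sigma$‑null, both integrals split as $\sum_{m=0}^{\ell}\int_{F_m}(\cdot)\,d\sigma_m$. The first observation is that $p_c(x)$ carries the factor $L_m(x)^{r_m-1}$, hence vanishes identically on $F_m$ whenever $r_m\ge 2$; so only the facets with $r_m=1$ contribute, and this is precisely where the counting constants $\kappa,\kappa_k,\kappa_{k_1,k_2}$ enter.

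Next I would use that, for any $m$ with $r_m=1$, the facet $F_m$ is a standard $(\ell-1)$‑simplex whose barycentric coordinate functions are $\{L_k\}_{k\ne m}$ (recall $\sum_{k=0}^{\ell}L_k\equiv 1$, so $\sum_{k\ne m}L_k\equiv 1$ on $F_m$), and that the normalization $u_m\wedge d\sigma_m=-d\mu$ with $u_m=dL_m$ makes $d\sigma_m$ the Lebesgue measure in this barycentric chart (one checks this directly both for $m\ge 1$, where $F_m=\{x_m=0\}$, and for $m=0$, where $F_0=\{\sum x_i=1\}$). Hence Lemma~\ref{techn} applies verbatim on $F_m$: $\int_{F_m}\prod_{k\ne m}L_k^{n_k}\,d\sigma_m=\bigl(\prod_{k\ne m}n_k!\bigr)\big/\bigl(\sum_{k\ne m}n_k+\ell-1\bigr)!$ for non‑negative integers $n_k$. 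On $F_m$ one has $p_c=\bigl(c-\sum_{k\ne m}\mu_kL_k\bigr)\prod_{k\ne m}L_k^{r_k-1}$ since $L_m\equiv 0$ and $L_m^{r_m-1}=1$; expanding the linear factor and using $\sum_{k=0}^{\ell}(r_k-1)=r_V-\ell-1$, $(r_m-1)!=1$ and $\mu_ir_i=d_i$, the displayed identity yields the closed forms
\[
\int_{F_m}p_c\,d\sigma_m=\frac{\pi_R}{(r_V-1)!}\Bigl((r_V-1)c-\sum_{i\ne m}d_i\Bigr),\qquad
\int_{F_m}L_jp_c\,d\sigma_m=\frac{\pi_R r_j}{r_V!}\Bigl(r_Vc-\mu_j-\sum_{i\ne m}d_i\Bigr)\quad(j\ne m),
\]
together with $\int_{F_m}L_jp_c\,d\sigma_m=0$ when $j=m$, because $L_j\equiv 0$ there regardless of $r_j$.

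Finally I would sum over the contributing facets. Summing the first formula over the $\kappa$ indices $m$ with $r_m=1$ and reorganizing $\sum_{m:\,r_m=1}\sum_{i\ne m}d_i=\sum_{k=0}^{\ell}d_k\kappa_k$ gives the first identity. For the second, I sum $\int_{F_m}L_jp_c\,d\sigma_m$ over the $\kappa_j$ indices $m$ with $r_m=1$, $m\ne j$; reorganizing by the value of the inner index, $\sum_{m:\,r_m=1,\,m\ne j}\sum_{i\ne m}d_i=d_j\kappa_j+\sum_{i\ne j}d_i\kappa_{i,j}=\sum_{k=0}^{\ell}d_k\kappa_{k,j}$ (with the convention $\kappa_{j,j}:=\kappa_j$), while the constant part contributes $\kappa_j\mu_j$; multiplying through by $r_j$ converts $\kappa_j\mu_j$ into $\kappa_jd_j$ and produces exactly $\tfrac{\pi_R}{r_V!}\bigl(r_jr_V\kappa_jc-r_j\sum_{k}d_k\kappa_{k,j}-\kappa_jd_j\bigr)$. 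The only delicate point is purely combinatorial bookkeeping — keeping the factorial index‑shifts in Lemma~\ref{techn} correct under the exponent bumps $r_k-1\mapsto r_k$, and rearranging the nested sums over facets and over subbundles; there is no genuine analytic difficulty.
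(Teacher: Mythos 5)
Your argument is correct and follows essentially the same route as the paper: restrict to the facets $F_m$ with $r_m=1$ (the others being killed by the factor $L_m^{r_m-1}$ in $p_c$), apply Lemma~\ref{techn} on each such facet viewed as a standard $(\ell-1)$-simplex with barycentric coordinates $\{L_k\}_{k\neq m}$, and then reorganize the double sums over facets and summands into the constants $\kappa$, $\kappa_k$, $\kappa_{k,j}$ (with $\kappa_{j,j}=\kappa_j$), exactly as in the paper's proof. The only difference is cosmetic: you compute closed forms for $\int_{F_m}p_c$ and $\int_{F_m}L_jp_c$ facet by facet before summing, whereas the paper first evaluates $\int_{\partial\Delta}\prod_k L_k^{r_k-1}\,d\sigma$ and its moments globally and then expands $p_c$; your intermediate facet formulas and the final bookkeeping both check out.
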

\begin{proof}
We start by computing $\int_{\partial \Delta}\prod_{k=0}^{\ell}L_k^{r_k-1}dv$. Denote $\Delta'_j$ the standard simplex obtained by freezing the coordinate $L_j(x)=1$. Then, applying Lemma \eqref{techn},
\begin{align*}
 \int_{\partial \Delta}\prod_{k=0}^{\ell}L_k^{r_k-1} d\sigma&= \sum_{j=0,r_j=1}^{\ell} \int_{\Delta'_j}\prod_{k=0,k\neq j}^{\ell}L_k^{r_k-1}dv,\\
 &= \sum_{j=0,r_j=1}^{\ell} \frac{\prod_{k=0,k\neq j}^{\ell}(r_k-1)!}{(\sum_{k\neq j}r_k -1)!},\\
 &= \sum_{j=0,r_j=1}^{\ell} \frac{\pi_R}{(\sum_{k=0}^\ell r_k -2)!},\\
 &= \frac{\pi_R}{(\sum_{k=0}^\ell r_k -2)!}\kappa.
\end{align*}
Now, we obtain
\begin{align*}
 \int_{\partial \Delta}p_c(x)d\sigma&=c\frac{\pi_R}{(r_V-2)!}\kappa -\sum_{k=0}^\ell  \mu_k \frac{\pi_R r_k}{(\sum_{k=0}^\ell r_k -1)!}\kappa_k,\\
 &=c\frac{\pi_R}{(r_V-2)!}\kappa -\frac{\pi_R}{(r_V-1)!}\sum_{k=0}^\ell  d_k\kappa_k,
\end{align*}
which leads to the first result. Now,
\begin{align*}
 \int_{\partial \Delta}x_jp_c(x)d\sigma&=c\frac{\pi_R}{(r_V-1)!}r_j\kappa_j -\sum_{k=0,k\neq j}^\ell  \mu_k r_kr_j\kappa_{k,j}\frac{\pi_R}{r_V!}-\mu_j\frac{\pi_R r_j(r_j+1)}{r_V!}\kappa_j,
\end{align*}
and this gives the second result as $\kappa_{j,j}=\kappa_j$.
\end{proof}    \end{center}

 \subsection{Chern characters of symmetric tensor powers of vector bundles}\label{chern}

In this section we gather some technical formulas.
 \begin{prop}\label{technique}
  Let $E$ a smooth vector bundle (or locally free sheaf) of rank $r_V$ over a smooth manifold. For $k\geq 1$, we denote the $S^k E$ the symmetric tensor power of order $k$ of $E$. Then,
  \begin{align*}
  rk(S^k E)=& \binom{r_V-1+k}{k},\\
  c_1(S^k E)=&\binom{r_V-1+k}{k-1}c_1(E),\\
  ch_2(S^k E)=&\binom{r_V+k}{k-1}ch_2(E) + \frac{1}{2}\binom{r_V-1+k}{k-2}c_1(E)^2,\\
  ch_3(S^k E)=&\frac{1}{6}\binom{r_V-1+k}{k-3} c_1(E)^3+\binom{r_V+k}{ k-2}c_1(E)ch_2(E)\\
  &+\left(\binom{r_V+1+k}{ k-1}+\binom{r_V+k}{k-2}\right)ch_3(E).\end{align*}
 \end{prop}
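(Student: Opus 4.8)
The plan is to prove Proposition~\ref{technique} by the splitting principle followed by a short symmetric-function computation. By naturality of Chern classes and the splitting principle we may assume $E=L_1\oplus\cdots\oplus L_{r_V}$ is a sum of line bundles; write $x_i:=c_1(L_i)$ for the Chern roots. Then $S^kE$ decomposes as $\bigoplus_{|a|=k}L_1^{\otimes a_1}\otimes\cdots\otimes L_{r_V}^{\otimes a_{r_V}}$, the sum ranging over weak compositions $a=(a_1,\dots,a_{r_V})\in\N^{r_V}$ with $a_1+\cdots+a_{r_V}=k$, whence
\[
ch(S^kE)=\sum_{|a|=k}\exp\Bigl(\sum_{i=1}^{r_V}a_i x_i\Bigr),\qquad ch_d(S^kE)=\frac{1}{d!}\sum_{|a|=k}\Bigl(\sum_{i=1}^{r_V}a_i x_i\Bigr)^{d}.
\]
The case $d=0$ gives $\rk(S^kE)=\#\{a:|a|=k\}=\binom{r_V-1+k}{k}$, and it remains to extract the components $d=1,2,3$.

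For each $d$ I would expand $(\sum_i a_i x_i)^d$ by the multinomial theorem and collect the monomials in the $x_i$ by shape. Because the outer sum $\sum_{|a|=k}$ is symmetric in the indices $1,\dots,r_V$, all that is needed are the finitely many combinatorial constants $S_\lambda:=\sum_{|a|=k}a_1^{\lambda_1}a_2^{\lambda_2}\cdots$ for partitions $\lambda\vdash d$: namely $S_1,S_2,S_{11}$ for $d\le2$ and additionally $S_3,S_{21},S_{111}$ for $d=3$. Each $S_\lambda$ is computed by a generating-function extraction: writing $\sum_{|a|=k}y_1^{a_1}\cdots y_{r_V}^{a_{r_V}}=h_k(y_1,\dots,y_{r_V})$, the complete homogeneous symmetric polynomial with $\sum_k h_k t^k=\prod_i(1-ty_i)^{-1}$, one applies the operators $y_j\partial_{y_j}$ and sets all $y_j=1$. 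Equivalently, from $\sum_{a_1\ge0}\binom{a_1}{j}t^{a_1}=t^j(1-t)^{-j-1}$ one gets the closed forms
\[
\sum_{|a|=k}\binom{a_1}{j}=\binom{k+r_V-1}{k-j},\qquad \sum_{|a|=k}a_1a_2=\binom{k+r_V-1}{k-2},\qquad \sum_{|a|=k}a_1a_2a_3=\binom{k+r_V-1}{k-3},
\]
together with $\sum_{|a|=k}a_1^2a_2=\binom{k+r_V}{k-2}+\binom{k+r_V-1}{k-3}$, and then $S_2,S_3$ follow from $a^2=\binom{a}{1}+2\binom{a}{2}$, $a^3=\binom{a}{1}+6\binom{a}{2}+6\binom{a}{3}$.

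Finally I would convert everything to the invariants of $E$. The symmetric combinations appearing are power sums $p_1=\sum x_i=c_1(E)$, $p_2=\sum x_i^2=2\,ch_2(E)$, $p_3=\sum x_i^3=6\,ch_3(E)$ and the auxiliary sums $\sum_{i\ne j}x_ix_j=p_1^2-p_2$, $\sum_{i\ne j}x_i^2x_j=p_1p_2-p_3$, $\sum_{i<j<l}x_ix_jx_l=\tfrac16(p_1^3-3p_1p_2+2p_3)$ (Newton's identities). Substituting the values of the $S_\lambda$ and collecting, one reaches $ch_2(S^kE)=(S_2-S_{11})ch_2(E)+\tfrac12 S_{11}\,c_1(E)^2$ and $ch_3(S^kE)=(S_3-3S_{21}+2S_{111})ch_3(E)+(S_{21}-S_{111})c_1(E)ch_2(E)+\tfrac16 S_{111}\,c_1(E)^3$; the coefficients then collapse to the stated binomials via repeated use of Pascal's rule. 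The content is entirely formal, so the only genuine difficulty is bookkeeping: for $ch_3$ the three partitions $(3),(2,1),(1,1,1)$ all contribute and the binomial simplifications are delicate, so I would cross-check the final coefficients against the degenerate case $r_V=1$, where $S^kE=L^{\otimes k}$ forces $ch_d(S^kE)=k^dx^d/d!$, and every stated formula must reproduce this.
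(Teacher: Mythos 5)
Your proposal is correct and follows essentially the same route as the paper, which simply invokes ``splitting principle and symmetries'' without details; your computation of the constants $S_\lambda$ via generating functions and the conversion to power sums checks out, and the final binomial coefficients (e.g. $S_3-3S_{21}+2S_{111}=\binom{r_V+k-1}{k-1}+3\binom{r_V+k-1}{k-2}+2\binom{r_V+k-1}{k-3}=\binom{r_V+1+k}{k-1}+\binom{r_V+k}{k-2}$) agree with the statement. In fact your argument is a complete proof of what the paper only sketches, and your $r_V=1$ sanity check is a sensible safeguard.
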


\begin{proof}
 This is done using splitting principle and symmetries. It can be checked easily that the formulas are correct for $E$ direct sum of 2 line bundles.
\end{proof}

\bigskip

{\small
\noindent {\bf Acknowledgments.} {\small \\ The first author was supported in part by an NSERC Discovery grant. He is grateful to the University  Aix-Marseille and to the Institute of Mathematics and Informatics of the Bulgarian Academy of Science for their hospitality and support during the preapration of this work. The second author is grateful to C. Tipler for useful conversations. His work has been carried out in the framework of the Labex Archim\`ede (ANR-11-LABX-0033) and of the A*MIDEX project (ANR-11-IDEX-0001-02), funded by the ``Investissements d'Avenir" French Government programme managed by the French National Research Agency (ANR). The second author was also partially supported by the ANR project EMARKS, decision No ANR-14-CE25-0010. 
}}

\bibliographystyle{plain}
\bibliography{biblio.bib}

\end{document}